\documentclass[12 pt]{amsart}
\usepackage[T1]{fontenc}
\usepackage{amsmath}
\usepackage{amsthm}
\usepackage{amssymb}
\usepackage{amsfonts}
\usepackage{outlines}
\usepackage{dsfont}
\usepackage{stackengine}
\usepackage{comment}
\usepackage{cancel}
\usepackage{graphicx}
\usepackage{enumerate}
\usepackage{mathrsfs}
\usepackage{tikz-cd}
\usepackage{hyperref}
\usepackage[numbers]{natbib}
\usepackage{enumitem}
\usepackage{lipsum}
\usepackage[left = 3cm, right = 3cm, top = 4 cm, bottom = 4 cm]{geometry}

\newtheorem{thm}{Theorem} [section]

\newtheorem{lemma}[thm]{Lemma}
\newtheorem{cor}[thm]{Corollary}
\newtheorem{remark}[thm]{Remark}
\newtheorem{prop}[thm]{Proposition}

\title[Uniqueness of tangent flows in free boundary flow]{Uniqueness of tangent flows in free boundary flow}
\author{Sourav Ghosh}
\date{}

\address{Department of Mathematics, University of Notre Dame, Notre Dame, IN 46556}
\email{sourav.ghosh.1@icloud.com}

\begin{document}

\maketitle

\begin{abstract}
We prove uniqueness of tangent flows for mean curvature flow with free boundary at singularities modeled on half-cylindrical self-shrinkers. More precisely, if a tangent flow is of the form $\mathbb{S}^{n-k} \times \mathbb{R}^k_+$ or $\mathbb{S}^{n-k}_+ \times \mathbb{R}^k $, then it is unique. This provides the free-boundary analogue of the uniqueness theory for cylindrical singularities. 
\end{abstract}


\section{Introduction}\label{section 1}

Mean curvature flow is one of the fundamental geometric evolution equations and has been studied extensively over the past several decades. Given a smooth hypersurface $M_0\subset\mathbb{R}^{n+1}$, the flow evolves the hypersurface in the direction of its mean curvature vector. A natural extension of the classical problem is the free boundary setting, where the evolving hypersurfaces are contained in a fixed domain $\Omega\subset\mathbb{R}^{n+1}$, satisfy the mean curvature flow equation in the interior of $\Omega$, and meet the boundary $\partial\Omega$ orthogonally. The study of free boundary mean curvature flow was initiated by Stahl [\citealp{stahl1996convergence}, \citealp{stahl1996regularity}] and Buckland \cite{buckland2005mean}, and has since developed into a rich area of geometric analysis.

As in the classical setting, singularities generally form in finite time, and their occurrence necessitates the use of weak formulations of the flow. Over the years, several such notions have been introduced, including the free boundary level set flow \cite{giga1993neumann} and the free boundary Brakke flow \cite{edelen2020free}. These frameworks make it possible to study the evolution past singularities and have led to significant advances in the understanding of free boundary mean curvature flow. In particular, recent work of Edelen-Ivaki-Zhu-Haslhofer \cite{edelen2022mean} established a detailed regularity and structure theory for mean-convex free boundary Brakke flows, providing a foundation for the analysis of singularities. More recently, Bao-Haslhofer \cite{bao2026free} established a well-posedness theory for free boundary mean curvature flow through cylindrical and half-cylindrical singularities. They proved that singularities modeled on $\mathbb{S}^{n-k} \times \mathbb{R}^k, \mathbb{S}^{n-k} \times \mathbb{R}^k_+, \text{or} \; \mathbb{S}^{n-k}_+ \times \mathbb{R}^k$ have a mean-convex neighborhood. Moreover, they showed that the free boundary level set flow is nonfattening provided all singularities have a mean-convex neighborhood. As a consequence, they established that free boundary mean curvature flow is well-posed through singularities of cylindrical and half-cylindrical type. Their work provides a natural framework for studying the finer structure of these singularities, including the uniqueness of tangent flows.

However, these recent advances represent only the beginning of a comprehensive
singularity theory for free boundary mean curvature flow. A fundamental tool in
the study of singularities is the notion of a tangent flow, obtained as the
limit of suitable parabolic rescalings about a singular point. Tangent flows
capture the asymptotic geometry of the evolving hypersurface near a singularity. Although the existence of tangent flows follows from compactness, their uniqueness is a substantially more delicate question.

In the boundaryless setting, fundamental work of Schulze \cite{schulze2014uniqueness}, Colding-Minicozzi \cite{colding2015uniqueness}, and Chodosh-Schulze \cite{chodosh2021uniqueness} established uniqueness results for compact, cylindrical, and asymptotically conical self-shrinkers, respectively. By contrast, analogous uniqueness results in the free boundary setting are largely unavailable. In this paper, we establish the free boundary analogue of the uniqueness theorem for cylindrical tangent flows.

At a boundary singular point $x_0\in \partial\Omega$, the geometry of the
ambient domain becomes flat under parabolic blow-up. More precisely, after a
suitable rotation, the rescaled domains $\lambda(\Omega-x_0)$ converge to the
half-space
\[
\mathbb{R}^{n+1}_+ := \mathbb{R}^{n+1}\cap\{x_{n+1}\geq 0\}
\]
as $\lambda\to\infty$. Thus, the natural models for boundary singularities are
self-shrinkers in the half-space $\mathbb{R}^{n+1}_+$ satisfying the free
boundary condition. Boundary neckpinches provide natural examples of
singularities whose tangent flows are half-cylindrical self-shrinkers. There
are two types of half-cylindrical self-shrinkers, distinguished by whether the
cylindrical axis is orthogonal or tangent to the boundary hyperplane. Namely,
the two models are
\[
\mathbb{S}^{n-k}_{\sqrt{2(n-k)}}\times \mathbb{R}^{k}_{+} \qquad \text{and} \qquad  \mathbb{S}^{n-k}_{+,\sqrt{2(n-k)}}\times \mathbb{R}^{k},
\]
where $\mathbb{S}^{n-k}_{\sqrt{2(n-k)}}$ denotes the round shrinking sphere of radius
$\sqrt{2(n-k)}$ and $\mathbb{S}^{n-k}_{+,\sqrt{2(n-k)}}$ denotes the hemisphere of radius $\sqrt{2(n-k)}$ meeting the boundary hyperplane orthogonally. The first model has its cylindrical axis orthogonal to the boundary, while the second has its cylindrical axis tangent to the boundary.

Our main result is the following.

\begin{thm} \label{thm 1.1}
Let $\mathcal{M} = (M_t)_{t \in (-1,0)}$ be a mean curvature flow with free boundary, and let
$(x_0,t_0)$ be a boundary singular point. Suppose that one tangent flow at
$(x_0,t_0)$ is given by one of the half-cylindrical self-shrinkers, that is, one of the two models
\[
\mathbb{S}^{n-k}_{\sqrt{2(n-k)}}\times \mathbb{R}^{k}_{+}
\quad\text{or}\quad
\mathbb{S}^{n-k}_{+,\sqrt{2(n-k)}}\times \mathbb{R}^{k}.
\]
Then the tangent flow at $(x_0,t_0)$ is unique.
\end{thm}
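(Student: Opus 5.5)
The plan is to adapt the Colding--Minicozzi strategy for uniqueness of cylindrical tangent flows to the half-space setting. The key idea is a reflection argument: reflect the rescaled flow across the boundary hyperplane.

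\textbf{Setup.} We work with the rescaled flow $\Sigma_\tau = e^{\tau/2}(M_{t_0-e^{-\tau}}-x_0)$. After a rotation, the rescaled domains $e^{\tau/2}(\Omega-x_0)$ converge to $\mathbb{R}^{n+1}_+$ at a rate $O(e^{-\tau/2})$ in $C^2$ on compact sets. We then reflect $\Sigma_\tau$ evenly across $\{x_{n+1}=0\}$, after straightening the boundary by a diffeomorphism that is $e^{-\tau/2}$-close to the identity. The reflected hypersurface $\tilde\Sigma_\tau$ is a closed, $C^{1,1}$-type surface. It satisfies rescaled MCF up to an error term of size $O(e^{-\tau/2})$, which is concentrated in the curvature of $\partial\Omega$. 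Under reflection, the two models become the full cylinders $\mathbb{S}^{n-k}\times\mathbb{R}^k$ and $\mathbb{S}^{n-k}\times\mathbb{R}^k$: in the second case the hemisphere doubles to a sphere.

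\textbf{Main estimates.} The Gaussian area $F$ of $\tilde\Sigma_\tau$ is almost monotone, with
\[
\frac{d}{d\tau}F(\tilde\Sigma_\tau)\le -\int|\phi|^2e^{-|x|^2/4}+Ce^{-\tau/2}.
\]
This follows from the free-boundary Huisken monotonicity, since orthogonality kills the boundary term in the flat limit and leaves only a curvature-of-$\partial\Omega$ error. The integrated error is summable. Next we establish, for $\tilde\Sigma_\tau$, the two key Colding--Minicozzi inequalities:
\begin{enumerate}[label=(\roman*)]
\item a \L{}ojasiewicz-type inequality of the first kind, $|F(\tilde\Sigma)-F(\mathcal C)|^{1-\theta}\lesssim \|\phi\|_{L^1}$;
\item a gradient \L{}ojasiewicz inequality controlling the distance to the space of cylinders by $\|\phi\|^{\theta}$.
\end{enumerate}
Both are required on the scale $R(\tau)$ where $\tilde\Sigma_\tau$ is a graph over some cylinder. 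Because the reflected surface is reflection-symmetric, it suffices to compare it with \emph{symmetric} cylinders. For the first model these are the cylinders whose axis contains $e_{n+1}$. For the second model they are the cylinders whose axis lies in $\{x_{n+1}=0\}$ and whose center lies in that hyperplane. The Colding--Minicozzi proof goes through verbatim on symmetric functions: the Jacobi operator kernel restricted to even functions is exactly the tangent space of this symmetric cylinder family, and even graphs correspond to Neumann data on the half-cylinder. The extension step, based on curvature estimates and the improvement of the scale $R(\tau)$, uses local regularity. Here we invoke the free-boundary Brakke regularity of Edelen and its reflected version to get graphical estimates near the boundary.

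\textbf{Conclusion.} Combining (i) and (ii) with the almost-monotonicity, and absorbing the exponentially decaying error $e^{-\tau/2}$ (which is stronger than any polynomial rate), we obtain the standard ODE argument: $\sum_j\|\phi\|_{L^1(\tau_j)}$, or equivalently $\int^\infty \|\partial_\tau \tilde\Sigma\|$, is finite. Hence $\tilde\Sigma_\tau$ converges to a single symmetric cylinder, and therefore $\Sigma_\tau$ converges to a single half-cylinder. This gives uniqueness.

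\textbf{Main obstacle.} I expect the main obstacle to be establishing the \L{}ojasiewicz inequalities with the inhomogeneous boundary error, together with the extension of the graphical scale near $\partial\Omega$. The reflected surface is only $C^{1,1}$ across the boundary, and the ambient curvature enters the equation. The first approach I would try is to perform all integrations by parts on the half-cylinder with Neumann conditions rather than on the reflection. Boundary terms would then be bounded by $e^{-\tau/2}$ times weighted norms, and absorbed by choosing $R(\tau)\sim\sqrt{\tau}$-type scales, as in Colding--Minicozzi, for which $e^{-\tau/2}e^{R^2/4}$ is still small.
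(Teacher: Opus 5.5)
Your route differs from the paper's, and it is viable in outline, but the step that carries the proof is left undone and one supporting claim is false.

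\textbf{How the two routes differ.} The paper never reflects the flow. It uses reflection only to identify the Neumann kernel $K_+=K_1^+\oplus K_2^+$ and to symmetrize the Gaussian kernel in the localized density $\Theta_g$. Instead it stays on the half-cylinder in Fermi coordinates. There it derives $\partial_\tau u=Lu+Q(u)+\mathcal E_\tau$ with $|\mathcal E_\tau|\le CR^2e^{-\tau/2}$, together with a perturbed Neumann condition. It then runs the Li--Sz\'ekelyhidi/Ghosh scheme rather than Colding--Minicozzi's:
\begin{itemize}
\item explicit comparison flows $\mathscr T^{\alpha,\mathscr C_+}$, built with the Neumann realization $L_N$, which follow the $K_2^+$ directions and satisfy $\|\phi\|\ge\kappa_0|\alpha|^2$ (Lemma 2.7);
\item non-concentration and a three-annulus lemma, whose blow-up limits satisfy $\partial_\eta u=0$;
\item the monotone quantity $\mathcal A^\gamma$ and the trichotomy of Proposition 4.7.
\end{itemize}
No \L{}ojasiewicz inequality is proved, no regularity across $\{x_{n+1}=0\}$ is needed, and the boundary-curvature errors are absorbed in alternative (i). Your approach reduces conceptually to the boundaryless theorem. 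In the flat case it literally is that theorem: the even reflection of a free boundary flow in a half-space is a smooth MCF, so Colding--Minicozzi applies directly. For curved $\partial\Omega$, however, your route puts all the analytic work into re-proving the Colding--Minicozzi inequalities in a perturbed setting.

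\textbf{The kernel claim is wrong.} The even Jacobi kernel is not the tangent space of the symmetric cylinder family, because $K_2^+\neq0$ in both models.
\begin{itemize}
\item For $\mathbb S^{n-k}\times\mathbb R^k_+$ it contains $z_k^2-2$ and $z_iz_j-2\delta_{ij}$ with $i,j<k$. For instance, for $\mathbb S^{n-1}\times\mathbb R_+$ the symmetric family near the model is a single point, while $K_+=\mathrm{span}\{z_1^2-2\}$.
\item For $\mathbb S^{n-k}_+\times\mathbb R^k$ it is all of $K_2$.
\end{itemize}
These quadratic modes are the non-integrable directions, so the symmetry removes nothing essential. You still need Colding--Minicozzi's full higher-order analysis; the paper's substitute is Lemma 2.7. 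Conversely, restricting to symmetric cylinders is unnecessary: the inequalities are equivariant, and limits of symmetric surfaces are symmetric.

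\textbf{``Verbatim'' fails for curved $\partial\Omega$.} After straightening and reflecting, $\tilde\Sigma_\tau$ is $C^2$ across the plane. In general it is not $C^3$, with a defect of order $e^{-\tau/2}$ coming from the odd part of the Fermi metric. It also moves by rescaled MCF only up to $O(R^2e^{-\tau/2})$. The Colding--Minicozzi proofs interpolate against higher-derivative bounds on $A$ and integrate by parts identities involving $\Delta A$ (for example, approximate versions of $LA=A$). Each of these must be redone on the half-surface, with Neumann boundary terms controlled by $e^{-\tau/2}\,\mathrm{poly}(R)$. The scale-extension step also needs a pseudolocality result in the perturbed free-boundary setting; the paper supplies this in Theorems 2.5--2.6. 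You correctly name this as the main obstacle, but you do not carry it out, and that is where essentially the whole proof lies.
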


Among the three types of cylindrical singularities, the model $\mathbb{S}^{n-k}\times\mathbb{R}^k$ arises at interior singular points. The uniqueness of tangent flows in this case follows from the result of Colding-Minicozzi \cite{colding2015uniqueness}, as mentioned above; see also Zhu \cite{zhu2020ojasiewicz}, as well as \cite{ghosh2025cylindrical}, and Bamler-Lai \cite{bamler2025pde}. Together with Theorem~\ref{thm 1.1}, this settles the uniqueness of tangent flows for all three cylindrical models.

\subsection*{Acknowledgments}
I would like to express my sincere gratitude to Professor G\'abor Sz\'ekelyhidi for introducing me to this problem. I am also grateful to Professor Nicholas Edelen for his many insightful discussions and valuable suggestions, which have significantly contributed to the development of this work.

\section{Preliminaries}\label{section 2} 

In this section, we briefly discuss the basic definitions in mean curvature flow with free boundary. Throughout the
paper, $\Omega\subset\mathbb{R}^{n+1}$ denotes a smooth domain, and we consider smooth, properly immersed hypersurfaces $\Sigma^n\subset\overline{\Omega}$ satisfying the free boundary condition
\[
\Sigma\perp\partial\Omega \qquad\text{along }\partial\Sigma.
\]

We denote by $x$ the position vector in $\mathbb{R}^{n+1}$. For a vector $V$, we write $V^T$ for its projection onto the tangent bundle $T\Sigma$, and $V^\perp=\Pi(V)$ for its projection onto the normal bundle $N\Sigma$.

The second fundamental form is the symmetric $2$-tensor with values in the normal bundle defined by $A(Y,Z)=(\nabla_YZ)^\perp,$ and the mean curvature vector is given by $H=\operatorname{tr}_{\Sigma}A.$ We define the shrinker mean curvature by $\phi=H+\frac12x^\perp.$ 

For $\lambda>0$, we define the parabolic rescaling
\begin{align*}
\mathcal{D}_\lambda:\mathbb{R}^{n+1}\times\mathbb{R} &\longrightarrow \mathbb{R}^{n+1}\times\mathbb{R},\\
(x,t) &\longmapsto (\lambda x,\lambda^2t).
\end{align*}
If $\mathcal{M}$ denotes the space-time track of a mean curvature flow, then $\mathcal{D}_\lambda\mathcal{M}$ is again a mean curvature flow.

Suppose that a free boundary mean curvature flow develops a singularity at the space-time point $(x_0,T)$. For any sequence $\lambda_i\to\infty$, the rescaled flows
\[
\mathcal{D}_{\lambda_i}\bigl(\mathcal{M}-(x_0,T)\bigr)
\]
admit a subsequence converging as Brakke flows to a limit $\{\nu_t\}_{t\in(-\infty,0)}$. If $x_0\in\Omega$, then the limit is a Brakke flow in $\mathbb{R}^{n+1}$. If $x_0\in\partial\Omega$, then, after a suitable
rotation, the limit is a free boundary Brakke flow in the half-space
\[
\mathbb{R}^{n+1}_+ = \mathbb{R}^{n+1}\cap\{x_{n+1}\ge0\}.
\]
Any such limit is called a tangent flow at $(x_0,T)$. In general, the tangent flow may depend on the sequence $\lambda_i$. Moreover, every tangent flow is a self-shrinking Brakke flow \cite{edelen2020free}.

Let $(M_t)_{0\le t<T}$ be a free boundary mean curvature flow and let $x_0\in\overline{\Omega}$. The associated rescaled flow centered at $(x_0,T)$ is
\[
M_\tau = \frac{1}{\sqrt{T-t}}(M_t-x_0), \qquad \tau=-\log(T-t).
\]
A straightforward computation shows that the rescaled flow evolves with normal velocity $H+\frac12x^\perp.$ Thus, tangent flows may equivalently be obtained as limits of sequences $M_{\tau_i}$ with $\tau_i\to\infty$.

\subsection{Fermi Coordinates}\label{section 2.1} 

We may assume, without loss of generality, that the singularity occurs at the space-time origin \((0,0) \in \mathbb{R}^{n+1} \times \mathbb{R}\), where \(0\in S=\partial\Omega\). We work in Fermi coordinates centered at the boundary point \(0\), chosen so that
\[
S=\{x_{n+1}=0\}, \qquad \Omega=\{x_{n+1}\geq 0\}.
\]
Let \(\nu\) denote the outward unit normal to \(S\). Thus, the positive \(x_{n+1}\)-direction points into \(\Omega\). Let
\[
X:B_{r_0}^{n}(0)\longrightarrow S
\]
be a local parametrization of \(S\) satisfying \(X(0)=0\). For \(r_0>0\) sufficiently small, depending only on the \(C^{3,\alpha}\)-geometry of \(S\), define the Fermi coordinate map
\[
\begin{aligned}
\Phi:\quad B_{r_0}^{n}(0)\times(-r_0,r_0)&\longrightarrow\mathbb{R}^{n+1},\\
(x',x_{n+1})&\longmapsto X(x')-x_{n+1}\nu(x').
\end{aligned}
\]
For \(r_0\) sufficiently small, \(\Phi\) is a diffeomorphism onto a tubular neighborhood of \(0\). After identifying this neighborhood with its coordinate image, we further decrease \(r_0\), if necessary, so that
\[
B_{r_0}^{n+1}(0) \subset B_{r_0}^{n}(0)\times(-r_0,r_0).
\]
We use these Fermi coordinates on \(B_{r_0}^{n+1}(0)\) and denote by
\[
g=\Phi^*\delta
\]
the pullback of the Euclidean metric. We choose the tangential coordinates so that
\[
g(0)=\delta.
\]
The metric coefficients satisfy
\[
g_{ij}(x',x_{n+1}) = \sigma_{ij}(x') + 2x_{n+1}A^S_{ij}(x') + x_{n+1}^2 \left\langle \partial_i\nu,\partial_j\nu \right\rangle,
\]
for $i,j =1,..., n.$ In addition, the Fermi coordinates satisfy  
$$g_{i,n+1} =0,\qquad g_{n+1,n+1} =1.$$
throughout the tubular neighborhood. Here \(A^S_{ij}\) denotes the components of the second fundamental form of \(S\) with respect to the outward unit normal \(\nu\).

Throughout, all tensor norms are taken with respect to the Euclidean metric \(\delta\). Since \(S\) is \(C^{2,\alpha}\), after possibly decreasing \(r_0\), there exists a constant \(C>0\), depending only on the \(C^{2,\alpha}\)-geometry of \(S\), such that in \(B_{r_0}(0)\),
\[
|g-\delta|\leq C|x|, \qquad |\partial g|\leq C, \qquad |\partial^2 g|\leq C, \qquad |\Gamma_g|\leq C.
\]
After decreasing \(r_0\) further if necessary, we may also assume that
\[
\frac12\delta\leq g\leq 2\delta, \qquad |g^{-1}-\delta|\leq C|x|.
\]
Here the inequalities between metrics are understood in the sense of quadratic forms. In particular, the Fermi metric is uniformly equivalent to the Euclidean metric in \(B_{r_0}(0)\).

We now introduce the parabolic rescaling. For \(\tau\in\mathbb{R}\), define
\[
M_\tau=e^{\frac{\tau}{2}}M_{-e^{-\tau}}.
\]
Here the rescaling is understood in the above Fermi coordinate system around the origin. More precisely, we identify \(M_t\) locally with its coordinate representation in the Fermi chart and perform the dilation $x\mapsto e^{\frac{\tau}{2}}x.$ Let
\[
\widetilde{X}(p,\tau) = e^{\frac{\tau}{2}}X(p,-e^{-\tau})
\]
be the parabolically rescaled flow. Since the coordinate representation of \(M_t\) is contained in \(B_{r_0}(0)\), the rescaled hypersurface \(M_\tau\) is contained in \(B_{e^{\frac{\tau}{2}}r_0}(0)\). The corresponding rescaled ambient metric is given by
\[
g_\tau(x) = g(e^{-\frac{\tau}{2}}x).
\]
The map \(\widetilde{X}(\cdot,\tau)\) is then viewed as an immersion into the ambient manifold equipped with the metric \(g_\tau\).

The rescaled flow satisfies the mean curvature flow equation with respect to the metric \(g_\tau\), together with the additional drift term induced by the dilation,
\[
(\partial_\tau \widetilde{X})^\perp = \left(H_{g_\tau} + \frac12 \langle \widetilde{X}, \nu_{g_\tau} \rangle_{g_\tau} \right) \nu_{g_\tau}.
\]
Here \(H_{g_\tau}\) denotes the scalar mean curvature of \(\widetilde{X}\) with respect to \(g_\tau\), \(\nu_{g_\tau}\) is the \(g_\tau\)-unit normal, and \((\cdot)^\perp\) denotes the normal projection with respect to \(g_\tau\).

The purpose of introducing Fermi coordinates and then passing to the parabolically rescaled flow is to flatten the free boundary. Suppose that, along a sequence $\tau_i\to\infty$, the rescaled flows $M_{\tau_i}$ converge locally smoothly to the half-cylinder $\mathscr C_+$. In the above coordinates, both $M_{\tau_i}$ and $\mathscr C_+$ have boundary contained in the fixed hyperplane $S=\{x_{n+1}=0\},$ while the rescaled metrics $g_{\tau_i}$ converge locally smoothly to the Euclidean metric $\delta$.

Consequently, for every $R>0$, and for all sufficiently large $i$, $M_{\tau_i}\cap B_R(0)$ can be represented as a normal graph over $\mathscr C_+\cap B_R(0)$. The fact that the boundary is represented by the same fixed hyperplane $S$ allows this graphical representation to be carried out up to the boundary.

We now record the estimates for the rescaled metric \(g_\tau\). For the zeroth order term, using the estimate for \(g-\delta\), we obtain
\[
|g_\tau(x)-\delta| = |g(e^{-\frac{\tau}{2}}x)-\delta| \leq Ce^{-\frac{\tau}{2}}|x|.
\]
Therefore, for every \(R>0\) satisfying $R\leq e^{\frac{\tau}{2}}r_0,$ on \(B_R(0)\),
\[
|g_\tau-\delta| \leq CRe^{-\frac{\tau}{2}}.
\]
For the first derivatives, we have
\[
\partial_i(g_\tau)_{\alpha\beta}(x) = e^{-\frac{\tau}{2}}(\partial_i g_{\alpha\beta})(e^{-\frac{\tau}{2}}x),
\]
and hence, on \(B_R(0)\),
\[
|\partial g_\tau| \leq Ce^{-\frac{\tau}{2}}.
\]
For the second derivatives, we have
\[
\partial_i\partial_j(g_\tau)_{\alpha\beta}(x) = e^{-\tau}(\partial_i\partial_j g_{\alpha\beta})(e^{-\frac{\tau}{2}}x),
\]
which gives, on \(B_R(0)\),
\[
|\partial^2 g_\tau| \leq Ce^{-\tau}.
\]
Here the constant \(C>0\) depends only on the \(C^{2,\alpha}\)-geometry of \(S\) and is independent of \(R\) and \(\tau\).

Since \(g_\tau(x)=g(e^{-\frac{\tau}{2}}x)\), the corresponding estimates for \(g\) imply
\[
\frac12\delta\leq g_\tau\leq2\delta, \qquad |g_\tau^{-1}-\delta| \leq Ce^{-\frac{\tau}{2}}|x|
\]
on \(B_R(0)\). In particular,
\[
|g_\tau^{-1}|\leq C.
\]
Originally, the free boundary condition is
\[
\langle \nu_{M_t},\nu_{\partial\Omega}\rangle=0 \qquad\text{on }M_t\cap\partial\Omega .
\]
After passing to the above Fermi coordinates, the boundary is flattened as
\[
S=\partial\Omega=\{x_{n+1}=0\},
\]
and the free boundary condition becomes
\[
\langle \nu_{M_t},\nu_{S}\rangle_g=0 \qquad\text{on }M_t\cap S,
\]
where \(\nu_S\) and \(\nu_{M_t}\) denote the unit normals with respect to the metric \(g\).

After the parabolic rescaling, the boundary remains
\[
S=\{x_{n+1}=0\},
\]
and the free boundary condition for \(M_\tau\) is
\[
\langle \nu_{M_\tau},\nu_S\rangle_{g_\tau}=0 \qquad\text{on }M_\tau\cap S.
\]

\subsection{Graphical Representation} \label{section 2.2} 

We first recall the graphical formulation for the rescaled mean curvature flow near a cylindrical self-shrinker \(\mathscr{C}\) [\citealp{colding2015uniqueness}, Lemma ~A.44, and also \citealp{colding2019dynamics}]. Consider the linearized operator
\[
Lu = \Delta u -\frac12 x\cdot\nabla u + |A|^2u + \frac12 u .
\]
On the cylinder \(\mathscr{C}\), we have $|A|^2=\frac12,$ and hence the operator reduces to
\[
Lu = \Delta u -\frac12 x\cdot\nabla u + u .
\]
There exists $c >0$ such that for \(u\in C^{2,\alpha}(\mathscr{C})\) with $\|u\|_{C^{2,\alpha}(\mathscr{C})}\le c,$ the rescaled mean curvature flow written as a normal graph of the function $u$ over
\(\mathscr{C}\) takes the form
\[
\partial_\tau u=Lu+Q(u),
\]
where \(L\) is the linearization of the rescaled mean curvature operator at \(u=0\), and $Q(0)=0, DQ(0)=0.$ In particular, \(Q\) vanishes to second order at the origin. More precisely, the nonlinear term $Q$ satisfies the estimate
\[
|Q(u)| \le C\Big(|\nabla u|^4 + |\nabla u|^2 |\nabla^2 u| + |\nabla u|^2 + u^2 + |u|\,|\nabla^2 u|\Big),
\]
for all $u$ with $\|u\|_{C^{2,\alpha}(\mathscr{C})} \le c$, where $C>0$ depends only on $\mathscr{C}$.

We now return to the rescaled free boundary flow considered above. Write points of $\mathscr C = S^{n-k}_{\rho}\times\mathbb R^k$ as $p= (y,z)$, where $y\in\mathbb{R}^{n-k+1}, z\in\mathbb{R}^k.$ Suppose that $M_\tau$ is written as a normal graph over $\mathscr{C}$ with graph function $u$. Thus
\[
F(p,\tau) = p + u(p,\tau)\frac{y}{\sqrt{2(n-k)}}.
\]
We derive the evolution equation satisfied by $u$. We use the same parametrization and notation as in the corresponding Euclidean graph equation \cite{sun2022generic}. As in the Euclidean case,
\[
F_\alpha = \left(1+\frac{u}{\sqrt{2(n-k)}}\right)y_\alpha +\frac{\partial_\alpha u}{\sqrt{2(n-k)}}y, \qquad F_i = \partial_i+\frac{\partial_i u}{\sqrt{2(n-k)}}y.
\]
In particular,
\[
|F_a|\leq C(1 + |u| + |\nabla u|).
\]
Moreover,
\[
\begin{aligned}
F_{\alpha\beta}
&=
-\left(1+\frac{u}{\sqrt{2(n-k)}}\right) \frac{\delta_{\alpha\beta}}{2(n-k)}y +\frac{\partial_\beta u}{\sqrt{2(n-k)}}y_\alpha +\frac{\partial_\alpha u}{\sqrt{2(n-k)}}y_\beta +\frac{\partial_{\alpha\beta}u}{\sqrt{2(n-k)}}y,\\
F_{i\alpha}
&=
\frac{\partial_{i\alpha}u}{\sqrt{2(n-k)}}y +\frac{\partial_i u}{\sqrt{2(n-k)}}y_\alpha, \\
F_{ij}
&=
\frac{\partial_{ij}u}{\sqrt{2(n-k)}}y,
\end{aligned}
\]
and consequently
\[
|F_{ab}| \leq C\left(1 + |\nabla u|+|\nabla^2u|\right).
\]
Let 
\[
\widetilde g^E_{ab}=\delta(F_a,F_b)
\]
denote the induced Euclidean metric on the graph, while
\[
\widetilde g^\tau_{ab} = g_\tau(F_a,F_b)
\]
denotes the induced metric with respect to $g_\tau$. We have
\[
\widetilde g^\tau_{ab} - \widetilde g^E_{ab} = (g_\tau-\delta)(F_a,F_b),
\]
and hence
\[
\left|\widetilde g^\tau-\widetilde g^E \right| \leq CRe^{-\frac{\tau}{2}}(1 + |u| + |\nabla u|)^2.
\]
Since the Euclidean graph metric is uniformly elliptic when $|u|$ and $|\nabla u|$ are sufficiently small, it follows that
\[
\left|(\widetilde g^\tau)^{-1} - (\widetilde g^E)^{-1} \right|\leq CRe^{-\frac{\tau}{2}}(1 + |u| + |\nabla u|)^2.
\]
Let $\nu_E$ denote the Euclidean unit normal to the graph,
\[
\nu_E = \frac{\frac{y}{\sqrt{2(n-k)}} -\left(1+\frac{u}{\sqrt{2(n-k)}}\right)^{-1} \partial_\alpha u\,y_\alpha -\partial_i u\,\partial_i}{\sqrt{1+ \left(1+\frac{u}{\sqrt{2(n-k)}}\right)^{-2} |\nabla_y u|^2
+ |\nabla_zu|^2}}.
\]
We obtain $g_\tau$-normal by setting
\[
N_\tau = g_\tau^{-1}(\nu_E^\flat),
\]
where
\[
\nu_E^\flat=\delta(\nu_E,\cdot).
\]
Indeed, for every tangent vector $F_a$,
\[
g_\tau(N_\tau,F_a) = \nu_E^\flat(F_a) = \delta(\nu_E,F_a) = 0.
\]
Thus, $N_\tau$ is $g_\tau$-normal to the graph. Since, on $B_R(0), |g_\tau^{-1}-\delta| \leq CRe^{-\frac{\tau}{2}},$ we have
\[
|N_\tau-\nu_E| \leq CRe^{-\frac{\tau}{2}}.
\]
Moreover,
\[
\left||N_\tau|_{g_\tau}-1 \right| \leq CRe^{-\frac{\tau}{2}}.
\]
Therefore, writing $\nu_\tau = \frac{N_\tau}{|N_\tau|_{g_\tau}},$ we obtain
\[
|\nu_\tau-\nu_E| \leq CRe^{-\frac{\tau}{2}}.
\]
Let $\nabla^\tau$ denote the Levi-Civita connection of $g_\tau$. Since the background Euclidean Christoffel symbols vanish in the pulled-back coordinates,
\[
\Gamma^\tau = \frac12g_\tau^{-1}*\nabla g_\tau,
\]
and hence
\[
|\Gamma^\tau| \leq Ce^{-\frac{\tau}{2}}.
\]
It follows that
\[
\nabla^\tau_{F_a}F_b = F_{ab} + \Gamma^\tau(F_a,F_b),
\]
and therefore
\[
\left|\nabla^\tau_{F_a}F_b-F_{ab}\right| \leq Ce^{-\frac{\tau}{2}}(1 + |u| + |\nabla u|)^2.
\]
We define the second fundamental form with respect to $g_\tau$ by
\[
\widetilde A^\tau_{ab} = g_\tau(\nabla^\tau_{F_a}F_b,\nu_\tau),
\]
while
\[
\widetilde A^E_{ab} = \delta(F_{ab},\nu_E)
\]
is the Euclidean second fundamental form. We have
\[
\widetilde A^\tau_{ab}-\widetilde A^E_{ab} = (g_\tau-\delta) (\nabla^\tau_{F_a}F_b,\nu_\tau) + \delta(\nabla^\tau_{F_a}F_b-F_{ab}, \nu_\tau) + \delta(F_{ab},\nu_\tau-\nu_E).
\]
For the first term, using
\[
|\nabla^\tau_{F_a}F_b| \leq C(1+ |u| + |\nabla u|+|\nabla^2u|) + Ce^{-\frac{\tau}{2}}(1+ |u|+ |\nabla u|)^2,
\]
we obtain
\[
\left|(g_\tau-\delta) (\nabla^\tau_{F_a}F_b,\nu_\tau)\right| \leq CRe^{-\frac{\tau}{2}} (1+ |u| + |\nabla u|+|\nabla^2u|) + CRe^{-\tau}(1+ |u| + |\nabla u|)^2.
\]
For the second term,
\[
\left|\delta(\nabla^\tau_{F_a}F_b-F_{ab}, \nu_\tau) \right| \leq Ce^{-\frac{\tau}{2}}(1+ |u| + |\nabla u|)^2.
\]
Finally, using the estimate for the normal,
\[
\left|\delta(F_{ab},\nu_\tau-\nu_E) \right|\leq |F_{ab}|\,|\nu_\tau-\nu_E|\leq CRe^{-\frac{\tau}{2}}(1+ |u| + |\nabla u|+|\nabla^2u|).
\]
Consequently,
\[
|\widetilde A^\tau-\widetilde A^E| \leq CRe^{-\frac{\tau}{2}} (1+|u| + |\nabla u|+|\nabla^2u|) + Ce^{-\frac{\tau}{2}}(1+|u| +|\nabla u|)^2 + CRe^{-\tau}(1+|u|+|\nabla u|)^2.
\]
With the convention
\[
H_\tau = (\widetilde g^\tau)^{ab}\widetilde A^\tau_{ab},
\]
and
\[
H_E = (\widetilde g^E)^{ab}\widetilde A^E_{ab},
\]
we have
\[
H_\tau-H_E = ( (\widetilde g^\tau)^{ab} -(\widetilde g^E)^{ab}) \widetilde A^E_{ab} + (\widetilde g^\tau)^{ab} (\widetilde A^\tau_{ab}-\widetilde A^E_{ab}).
\]
From the Euclidean graph formulas,
\[
|\widetilde A^E| \leq C(1+ |u| + |\nabla u|+|\nabla^2u|).
\]
Therefore
\[
\begin{aligned}
|H_\tau-H_E| \leq{}& CRe^{-\frac{\tau}{2}} (1+ |u| + |\nabla u|)^2 (1+ |u| + |\nabla u|+|\nabla^2u|) 
\\&
+ CRe^{-\frac{\tau}{2}} (1+|u|+|\nabla u|+|\nabla^2u|) + Ce^{-\frac{\tau}{2}}(1+|u| + |\nabla u|)^2
\\&+
CRe^{-\tau}(1+ |u| + |\nabla u|)^2.
\end{aligned}
\]
We next consider the drift term. Since $X = p + \frac{u}{\sqrt{2(n-k)}}y,$ the Euclidean calculation gives
\[
\delta(X,\nu_E) = \sqrt{2(n-k)}+u-z_i\partial_i u.
\]
On the other hand,
\[
g_\tau(X,\nu_\tau) - \delta(X,\nu_E) = (g_\tau-\delta)(X,\nu_\tau) + \delta(X,\nu_\tau-\nu_E).
\]
Consequently,
\[
\left|g_\tau(X,\nu_\tau) - (\sqrt{2(n-k)}+u-z_i\partial_i u)\right| \leq CRe^{-\frac{\tau}{2}}|X||\nu_\tau|
+ |X|\,|\nu_\tau-\nu_E|.
\]
Since $\nu_\tau$ is $g_\tau$-unit,
\[
1=g_\tau(\nu_\tau,\nu_\tau) =|\nu_\tau|_\delta^2 +(g_\tau-\delta)(\nu_\tau,\nu_\tau).
\]
Hence, 
\[
|\nu_\tau|_\delta\leq C.
\]
Therefore,
\[
\left|g_\tau(X,\nu_\tau) - (\sqrt{2(n-k)}+u-z_i\partial_i u) \right| \leq CRe^{-\frac{\tau}{2}}|X|.
\]
Since the graph is contained in the Fermi coordinate ball $B_R(0)$, we have
\[
\left|g_\tau(X,\nu_\tau) - (\sqrt{2(n-k)}+u-z_i\partial_i u) \right|\leq CR^2e^{-\frac{\tau}{2}}.
\]
Since
\[
\partial_\tau X = \frac{\partial_\tau u}{\sqrt{2(n-k)}}y = \partial_\tau u\,\nu, \qquad \nu=\frac{y}{\sqrt{2(n-k)}},
\]
we have
\[
g_\tau(\partial_\tau X,\nu_\tau) = \partial_\tau u\,g_\tau(\nu,\nu_\tau).
\]
In the Euclidean metric,
\[
\delta(\nu,\nu_E) = S^{-1}.
\]
Moreover,
\[
\left| g_\tau(\nu,\nu_\tau)-S^{-1} \right| \leq \left|(g_\tau-\delta)(\nu,\nu_\tau)\right| + \left|\delta(\nu,\nu_\tau-\nu_E)\right| \leq CRe^{-\frac{\tau}{2}}.
\]
Define 
$$\eta_\tau := g_\tau(\nu, \nu_\tau) - S^{-1}.$$
Then
\[
g_\tau(\partial_\tau X,\nu_\tau) = \left(S^{-1}+\eta_\tau\right)\partial_\tau u, \qquad |\eta_\tau| \leq CRe^{-\frac{\tau}{2}}.
\]
The rescaled mean curvature flow equation is
\[
(\partial_\tau X)^\perp_{g_\tau} = \left(H_\tau+\frac12 g_\tau(X,\nu_\tau) \right)\nu_\tau.
\]
Taking the $g_\tau$-inner product with $\nu_\tau$ gives
\[
g_\tau(\partial_\tau X,\nu_\tau) = H_\tau+\frac12g_\tau(X,\nu_\tau).
\]
Thus
\[
\left(S^{-1}+\eta_\tau\right)\partial_\tau u = H_\tau+\frac12g_\tau(X,\nu_\tau).
\]
Using
\[
H_\tau=H_E+(H_\tau-H_E),
\]
and
\[
g_\tau(X,\nu_\tau) = \sqrt{2(n-k)}+u-z_i\partial_i u + \big(g_\tau(X,\nu_\tau)-(\sqrt{2(n-k)}+u-z_i\partial_i u)\big),
\]
together with
\[
H_E+\frac12S^{-1}(\sqrt{2(n-k)}+u-z_i\partial_i u) = S^{-1}(Lu+Q(u)),
\]
we obtain
\[
\left(S^{-1}+\eta_\tau\right)\partial_\tau u = S^{-1}(Lu+Q(u)) +(H_\tau-H_E) + \frac12
\big(g_\tau(X,\nu_\tau)-(\sqrt{2(n-k)}+u-z_i\partial_i u)\big).
\]
Assume throughout that $|u|+|\nabla u|+|\nabla^2u|\leq 1.$ Then $S$ is uniformly bounded above and below. Thus, decreasing $r_0$ further if necessary, $S^{-1}+\eta_\tau = S^{-1}(1+S\eta_\tau)$ is uniformly bounded away from zero. Hence
\[
\frac{S^{-1}}{S^{-1}+\eta_\tau} = 1+\widetilde\eta_\tau, \qquad |\widetilde\eta_\tau| \leq CRe^{-\frac{\tau}{2}}.
\]
Dividing the preceding equation by $S^{-1}+\eta_\tau$, we obtain
\[
\partial_\tau u = Lu+Q(u)+\mathcal{E}_\tau,
\]
where
\[
\mathcal{E}_\tau = \widetilde\eta_\tau(Lu+Q(u)) + \frac{H_\tau-H_E}{S^{-1}+\eta_\tau} +
\frac{g_\tau(X,\nu_\tau)-(\sqrt{2(n-k)}+u-z_i\partial_i u)}{2(S^{-1}+\eta_\tau)}.
\]
Under the above bounds on $u$, we have
\[
|Lu+Q(u)|\leq C,
\]
and the previously established estimates give
\[
|H_\tau-H_E|\leq CRe^{-\frac{\tau}{2}}
\]
and
\[
\left|g_\tau(X,\nu_\tau)-(\sqrt{2(n-k)}+u-z_i\partial_i u)\right|\leq CR^2e^{-\frac{\tau}{2}}.
\]
Consequently,
\[
|\mathcal{E}_\tau| \leq CRe^{-\frac{\tau}{2}} + CR^2e^{-\frac{\tau}{2}} \leq CR^2e^{-\frac{\tau}{2}},
\]
for $R\geq 1$, after enlarging $C$ if necessary. Here $C$ is independent of $R$ and $\tau$.

The preceding computation was carried out with the cylinder as the reference hypersurface. We will also use the graphical representation when the reference hypersurface is itself evolving by the rescaled mean curvature flow. Let $\{N_\tau\}$ be a rescaled mean curvature flow, and suppose that $M_\tau$ is represented as a normal graph over $N_\tau$,
\[
X(p,\tau) = p+u(p,\tau)\nu_{N_\tau}(p).
\]
The evolution equation for the graph function is obtained in the same way as in the stationary case, with the geometric quantities computed with respect to $N_\tau$. Since $N_\tau$ remains uniformly close to the
cylinder, its second fundamental form and the corresponding zeroth-order coefficients are uniformly bounded. Thus, on $B_R(0)$, we obtain
\[
\left|\partial_\tau u -\Delta_{N_\tau}u +\frac12\langle X,\nabla_{N_\tau}u\rangle \right| \leq C\left(|u| + |\nabla_{N_\tau}u| + R^2e^{-\frac{\tau}{2}} \right),
\]
where $C$ is independent of $R$ and $\tau$. The final term $R^2e^{-\frac{\tau}{2}}$ arises from the perturbation of the ambient metric in the Fermi coordinates.

\subsection{Neumann Boundary Condition}\label{section 2.3} 

Suppose that $M_\tau$ is represented as a normal graph over the half-cylinder $\mathscr{C}_+$,
\[
X(p,\tau) = p+u(p,\tau)\nu_{\mathscr{C}_+}(p).
\]
The Euclidean unit normal to $M_\tau$ satisfies
\[
\nu_{M_\tau} = \nu_{\mathscr{C}_+} - \nabla_{\mathscr{C}_+}u + E,
\]
where, assuming $|u|,|\nabla_{\mathscr{C}_+}u|\leq 1,$ the remainder satisfies
\[
|E| \leq C\left(|u|\,|\nabla_{\mathscr{C}_+}u| + |\nabla_{\mathscr{C}_+}u|^2\right).
\]
Along $\partial\mathscr{C}_+$, the free boundary condition is
\[
g_\tau(\nu_{M_\tau},e_{n+1})=0.
\]
Since
\[
\delta(\nu_{\mathscr{C}_+},e_{n+1})=0
\]
along $\partial\mathscr{C}_+$, we obtain
\[
\delta(\nabla_{\mathscr{C}_+}u,e_{n+1}) = \delta(E,e_{n+1}) + (g_\tau-\delta)(\nu_{M_\tau},e_{n+1}).
\]
Using $|g_\tau-\delta| \leq Ce^{-\frac{\tau}{2}}|X|$ and $|X|\leq CR$ on $B_R(0)$, we obtain
\[
\left|\delta(\nabla_{\mathscr{C}_+}u,e_{n+1})\right| \leq C\left(|u|\,|\nabla_{\mathscr{C}_+}u| + |\nabla_{\mathscr{C}_+}u|^2 + Re^{-\frac{\tau}{2}} \right).
\]
Thus the free boundary condition gives a perturbed Neumann condition for the graph function $u$, with the last term arising from the perturbation of the ambient metric.

\subsection{Monotonicity Formula}\label{section 2.4} 

In this section we prove a monotonicity formula in Fermi coordinates. The proof is an adaptation of the argument in [\citealp{edelen2020free}, Section~5]. Write $\sigma=-t>0$. We set $\kappa = c_0(n)^{-1}r_0$ 
for a sufficiently large constant $c_0(n)$, and restrict to $-\beta_0(n)\kappa^2<t<0.$ Define
\[
\rho(x,t) = (4\pi\sigma)^{-\frac {n}{2}} \exp\left(-\frac{|x|^2}{4\sigma}\right).
\]
Let
\[
\eta(s)=(1-s)_+^4
\]
and define
\[
\phi(x,t) = \eta\left(\left(\frac{\kappa^2}{\sigma}\right)^{\frac34} \frac{|x|^2-\alpha\sigma}{\kappa^2}
\right),
\]
where $\alpha=\alpha(n)$ will be chosen in the proof of Theorem \ref{thm 2.1}. We denote by $d\mathcal{H}^n_g$ the $n$-dimensional Hausdorff measure induced by the metric $g$. When $g =\delta,$ we will just denote $d\mathcal{H}^n_\delta = d\mathcal{H}^n.$ Set
\[
\Phi  = \rho\phi.
\]
Since $\Phi$ depends only on $|x|^2$, it is even with respect to $S=\{x_{n+1}=0\}$. In particular,
\[
\partial_{x_{n+1}} \Phi = 0\qquad\text{on }S.
\]
We now verify that the support of $\Phi$ is contained in $B_{r_0}(0)$, so that all subsequent computations take place within the Fermi coordinate neighborhood. If $x\in\operatorname{supp}\phi(\cdot,t)$, then
\[
\left(\frac{\kappa^2}{\sigma}\right)^{\frac34} \frac{|x|^2-\alpha\sigma}{\kappa^2} \leq 1,
\]
and hence
\[
|x|^2 \leq \alpha\sigma + \kappa^2 \left(\frac{\sigma}{\kappa^2}\right)^{\frac34}.
\]
Since $\sigma\leq\beta_0(n)\kappa^2,$ we obtain $|x|\leq C(n)\kappa.$ Thus, after choosing $c_0(n)$ sufficiently large, $\operatorname{supp}\phi(\cdot,t) \subset B_{r_0}(0).$ Since, $\Phi = \rho \phi,$ we have $\operatorname{supp} \Phi\subset B_{r_0}(0),$ so all subsequent computations take place within the Fermi coordinate neighborhood. In what follows, $L=T_xM_t$ denotes the tangent plane to the Fermi-coordinate pullback, and $\pi_L^\perp$ denotes the Euclidean orthogonal projection onto $L^\perp$. 

\begin{thm} \label{thm 2.1}
There exist constants $\beta_0=\beta_0(n)>0$ and $A=A(n)>0$ such that, for $\kappa$ sufficiently small, if
\[
\mathcal E_0\geq \int_{M_{-\beta_0\kappa^2}}\phi\,d\mathcal H^n_g,
\]
then
\[
t\longmapsto e^{A(-t)^{\frac{1}{4}}} \int_{M_t} \Phi \,d\mathcal H^n_g + A\mathcal E_0(-t)
\]
is decreasing on $[-\beta_0\kappa^2,0)$.
\end{thm}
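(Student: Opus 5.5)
We will adapt the localized Huisken monotonicity computation of \cite{edelen2020free}, Section~5, to the Fermi coordinates of Section~\ref{section 2.1}, treating the metric perturbation $g-\delta$ as a lower-order error. The starting point is the first variation formula for the flow in the metric $g$. For the time-dependent test function $\Phi\geq 0$ it gives
\[
\frac{d}{dt}\int_{M_t}\Phi\,d\mathcal H^n_g \leq \int_{M_t}\Big(-\Phi|H_g|^2 + \langle \nabla^g\Phi, H_g\rangle_g + \partial_t\Phi\Big)\,d\mathcal H^n_g + \int_{\partial M_t}\Phi\,\langle \eta,\cdot\rangle ,
\]
where the boundary term comes from the conormal $\eta$ along $\partial M_t\subset S$. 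By the free boundary condition, $\eta=\pm\nu_S$ (the $g$-normal to $S$, which in Fermi coordinates is $\partial_{x_{n+1}}$ since $g_{i,n+1}=0$, $g_{n+1,n+1}=1$). Hence the boundary flux is $\int_{\partial M_t} g(\nabla^g\Phi,\nu_S)=\int_{\partial M_t}\partial_{x_{n+1}}\Phi=0$, since $\Phi$ is even in $x_{n+1}$. This is exactly why Fermi coordinates and the Euclidean $|x|^2$ are chosen.

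Next we complete the square as in Huisken. We write
\[
-\Phi|H|^2+\langle\nabla\Phi,H\rangle+\partial_t\Phi = -\Phi\Big|H-\frac{\nabla^\perp\Phi}{\Phi}\Big|^2 + \Big(\partial_t\Phi+\operatorname{div}_{M}\nabla\Phi+\frac{|\nabla^\perp\Phi|^2}{\Phi}\Big) - \operatorname{div}_M\nabla\Phi ,
\]
and we integrate the last divergence by parts. This produces a term $\int \Phi\langle H, \cdot\rangle$ (already absorbed) plus a boundary term; the boundary term again vanishes by evenness, now also using that the tangential part along $\partial M_t$ is orthogonal to $\nu_S$. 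For Euclidean quantities and $\rho$ alone, the bracket $(\partial_t+\Delta_M+|\nabla^\perp\cdot|^2/\cdot)\rho$ is identically zero. For $\phi$ we use the Edelen computation. The choice $\eta(s)=(1-s)^4_+$ and the scaling $(\kappa^2/\sigma)^{3/4}$ make $(\partial_t-\Delta_M)\phi\leq$ (a controlled term). With the right $\alpha(n)$ and $\beta_0(n)$, the cross terms $\nabla\rho\cdot\nabla\phi$ have a good sign where $|x|^2\geq\alpha\sigma$. What remains is bounded by $C\sigma^{-3/4}\kappa^{-1/2}\Phi$ plus a term supported where $\phi$ is not constant, hence $|x|^2\gtrsim\sigma^{1/4}\kappa^{3/2}$. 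There $\rho\le C\kappa^{-n}$-type bounds hold, so that term is at most $C\int\phi$. The integral $\int_{M_t}\phi$ is in turn bounded by $\mathcal E_0$ via a separate, cruder monotonicity for $\phi$ alone, which is where the hypothesis on $\mathcal E_0$ enters.

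The genuinely new step, and the main obstacle, is replacing Euclidean $\Delta_M,\nabla^\perp,d\mathcal H^n$ by their $g$-versions. Using $|g-\delta|\le C|x|$, $|\Gamma_g|\le C$, and $|\partial g|\le C$, each discrepancy has one of the following forms:
\begin{itemize}
\item $C|x|\,|\nabla^2\Phi|$,
\item $C|\nabla\Phi|$,
\item $C|x||H|\,|\nabla\Phi|$.
\end{itemize}
On the Gaussian scale $|x|\sim\sqrt\sigma$, and $|\nabla^2\rho|\sim\sigma^{-1}\rho(1+|x|^2/\sigma)$, so these errors are $O(\sigma^{-1/2})\Phi$ times polynomial weights in $|x|^2/\sigma$. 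I would absorb the $|H|$-terms into the good square $-\Phi|H-\nabla^\perp\Phi/\Phi|^2$ by Cauchy--Schwarz. The polynomial weights are controlled by Gaussian decay and the support bound $|x|\le C\kappa$. This yields
\[
\frac{d}{dt}\int\Phi\,d\mathcal H^n_g \leq C\sigma^{-3/4}\int\Phi\,d\mathcal H^n_g + C\mathcal E_0 .
\]
Here I use $\sigma^{-1/2}\le\beta_0^{1/4}\kappa^{1/2}\sigma^{-3/4}$, so that the geometric error is dominated by the same $\sigma^{-3/4}$ rate produced by the cutoff. Multiplying by the integrating factor $e^{A\sigma^{1/4}}$, whose derivative in $t$ is $-\tfrac A4\sigma^{-3/4}e^{A\sigma^{1/4}}$, and adding $A\mathcal E_0\sigma$ with $A$ large, gives that the stated quantity is nonincreasing. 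The delicate bookkeeping is ensuring that every metric-error term carries at least the factor $\sigma^{-3/4}$ rather than $\sigma^{-1}$. Exploiting $|g-\delta|\le C|x|$ together with $|x|\lesssim\sqrt\sigma$ on the effective support is what saves the half power.
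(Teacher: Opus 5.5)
Your overall architecture matches the paper's. The boundary flux vanishes because $\Phi$ depends only on $|x|^2$ and the conormal along $\partial M_t$ is $\pm\partial_{x_{n+1}}$ in Fermi coordinates. Metric errors come from $|g-\delta|\le C|x|$ and $|\Gamma_g|\le C$, the $|H|$-error is absorbed by Young, and everything is reduced to $C\sigma^{-3/4}\int\Phi+C\int\phi$, with $\int\phi\le\mathcal E_0$ and the integrating factor $e^{A\sigma^{1/4}}$.

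The gap is in the cutoff. Your displayed identity is off by $\langle H,\nabla\Phi\rangle$: the last term should be $-\Delta_{M_t}\Phi=-(\operatorname{tr}_LD^2\Phi+\langle H,\nabla\Phi\rangle)$, whose integral is a boundary term. More importantly, completing the square with $\nabla^\perp\Phi/\Phi$ leaves, already for $g=\delta$, the pointwise bracket
\[
\rho(\partial_t+\operatorname{tr}_L D^2)\phi+2\langle D\rho,D\phi\rangle+\rho\,\phi^{-1}|\pi_L^\perp D\phi|^2
=\rho\Big[a\psi^3\Big(\frac{5|x|^2}{\sigma}-\alpha-8n\Big)+48a^2\psi^2|\pi_Lx|^2+64a^2\psi^2|\pi_L^\perp x|^2\Big],
\]
where $a=\kappa^{-1/2}\sigma^{-3/4}$ and $\phi=\psi^4$. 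The cross term $2\langle D\rho,D\phi\rangle=8a\psi^3\sigma^{-1}|x|^2\rho$ is nonnegative everywhere, since both gradients point toward the origin. So it does not have a good sign. Also, $\phi$ is hit by the backward operator $\partial_t+\Delta$, not the $\partial_t-\Delta$ for which Edelen's cutoff estimate holds.

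More seriously, these terms carry only $\psi^3$ or $\psi^2$, while $\Phi$ and $\phi$ carry $\psi^4$. As $\psi\to0$ at the edge of $\operatorname{supp}\phi$, no pointwise bound by $C\sigma^{-3/4}\Phi+C\phi$ can hold, however small $\rho$ is there. Without extra input such as an area bound, the integrated edge contribution is not controlled by $\int\phi$ either. Moreover, $\phi$ is nowhere locally constant, so there is no separate region to isolate.

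The same defect hits your metric errors whenever derivatives fall on $\phi$: $C|x|\rho|D^2\phi|$, $C\rho|D\phi|$, and the remainder $C|x|^4a^2\psi^2\rho$ left after absorbing $C|x||H|\rho|D\phi|$. Finally, your near-center term $C\kappa^{-1/2}\sigma^{-3/4}\Phi$ would make $A$ depend on $\kappa$, whereas the statement has $A=A(n)$.

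The paper never completes the square with $\phi$. It uses $\int_{M_t}(\phi\,\Delta_{M_t}\rho-\rho\,\Delta_{M_t}\phi)=0$, where the boundary term vanishes by evenness and the free boundary condition. This gives
\[
\frac{d}{dt}\int_{M_t}\Phi=\int\phi(\partial_t+\operatorname{tr}_{L,g}\nabla_g^2)\rho+\int\rho(\partial_t-\operatorname{tr}_{L,g}\nabla_g^2)\phi+2\int\langle H,\phi\nabla_{M_t}\rho\rangle_g-\int|H|_g^2\Phi .
\]
It then completes the square only with $\nabla^\perp\rho/\rho$ and proves $(\partial_t-\operatorname{tr}_{L,g}\nabla_g^2)\phi\le0$ exactly. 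The $g$-corrections $C|x|a\psi^3+C|x|a^2\psi^2|\pi_Lx|^2$ have the same $\psi$-powers as the good terms $-ca\psi^3-48a^2\psi^2|\pi_Lx|^2$ of the Euclidean computation (with $\alpha(n)$ large). They are therefore absorbed once $r_0$ is small.

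Every surviving error is then a multiple of $\Phi$, namely $C\big(\frac{|x|}{\sigma}+\frac{|x|^3}{\sigma^2}+\frac{|H|_g|x|^2}{\sigma}\big)\Phi$. Young's inequality together with $\frac{|x|}{\sigma}\rho\le1+\big(\frac{|x|}{\sigma}\rho\big)^q$, $q=1+\frac{1}{2n+2}$, yields $C\sigma^{-3/4}\Phi+C\phi$. Your Gaussian-decay splitting would also work at this point. Note that the $C\phi$ term, and hence $\mathcal E_0$, comes from this metric error, not from the cutoff; for $g=\delta$ it is absent, as the remark after the theorem states.
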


\begin{proof}
We first estimate the Gaussian factors. Since
\[
\rho(x,t) = (4\pi\sigma)^{-\frac n2} \exp\left(-\frac{|x|^2}{4\sigma}\right), \qquad \sigma=-t,
\]
we have
\[
D_i\rho=-\frac{x_i}{2\sigma}\rho, \qquad D_iD_j\rho = \left(-\frac{\delta_{ij}}{2\sigma} + \frac{x_ix_j}{4\sigma^2}\right)\rho.
\]
Moreover,
\[
\partial_t\rho = \left(\frac{n}{2\sigma} - \frac{|x|^2}{4\sigma^2}\right)\rho.
\]
Using the comparison between the Euclidean and $g$-Hessians, we obtain
\[
\left|\operatorname{tr}_{L,g}\nabla_g^2\rho - \operatorname{tr}_{L}D^2\rho\right| \leq C\left(\frac{|x|}{\sigma} + \frac{|x|^3}{\sigma^2}\right)\rho.
\]
Since
\[
\operatorname{tr}_{L}D^2\rho = \left(-\frac{n}{2\sigma} + \frac{|x|^2-|\pi_L^\perp x|^2}{4\sigma^2} \right)\rho,
\]
we conclude that
\[
\left|\left(\partial_t+\operatorname{tr}_{L,g}\nabla_g^2\right)\rho + \frac{|\pi_L^\perp x|^2}{4\sigma^2}\rho\right| \leq C\left(\frac{|x|}{\sigma} + \frac{|x|^3}{\sigma^2} \right)\rho.
\]
Next, we consider the cutoff function. The cutoff estimate was proved in [\citealp{edelen2020free}, Theorem ~4.9] for the Euclidean metric. We now show that, after choosing $\alpha=\alpha(n)$ sufficiently large and then $r_0=r_0(n)$ sufficiently small, the same estimate holds with respect to $g$, namely
\[
\left(\partial_t-\operatorname{tr}_{L,g}\nabla_g^2\right)\phi \leq0
\]
for $-\beta_0(n)\kappa^2<t<0$ on $B_{r_0}(0)$. Let
\[
\psi = 1 - \left(\frac{\kappa^2}{\sigma}\right)^{\frac34} \frac{|x|^2-\alpha\sigma}{\kappa^2}.
\]
On $\{\phi>0\}$, so that $\phi=\psi^4$, direct differentiation gives
\[
D\phi=-8\kappa^{-\frac12}\sigma^{-\frac34}\psi^3x
\]
and
\[
D^2\phi(v,v) = -8\kappa^{-\frac12}\sigma^{-\frac34}\psi^3|v|^2 + 48\kappa^{-1}\sigma^{-\frac32}\psi^2 \langle x,v\rangle^2.
\]
Since $\sigma=-t$, we also obtain, for $\alpha=\alpha(n)$ sufficiently large,
\[
(\partial_t-\operatorname{tr}_L D^2)\phi \leq -c\kappa^{-\frac12}\sigma^{-\frac34}\psi^3 - 48\kappa^{-1}\sigma^{-\frac32}\psi^2|\pi_Lx|^2.
\]
Now, let $e_1,\ldots,e_n$ be a $g$-orthonormal basis of $L$, and let $v_1,\ldots,v_n$ be a Euclidean orthonormal basis of $L$. Since $|g-\delta|\leq C|x|,$ after decreasing $r_0$ if necessary, we may choose the bases so that $|e_i-v_i|\leq C|x|$ for every $i$. In particular,
\[
\left|\sum_{i=1}^n |e_i|^2-n\right| \leq C|x|.
\]
Moreover,
\[
\sum_{i=1}^n\langle x,e_i\rangle^2 = \sum_{i=1}^n\langle x,v_i\rangle^2 + \sum_{i=1}^n \left(\langle x,e_i\rangle^2-\langle x,v_i\rangle^2\right).
\]
Since $e_i-v_i\in L$ and $|e_i-v_i|\leq C|x|$,
\[
|\langle x,e_i-v_i\rangle| = |\langle \pi_Lx,e_i-v_i\rangle| \leq C|x||\pi_Lx|.
\]
Therefore,
\[
\left|\sum_{i=1}^n\langle x,e_i\rangle^2 - |\pi_Lx|^2\right| \leq C|x||\pi_Lx|^2.
\]
Since $|\Gamma_g|\leq C$, we have
\[
\left|\sum_{i=1}^n D\phi\bigl(\Gamma_g(e_i,e_i)\bigr)\right| \leq C|D\phi|\leq C|x| \kappa^{-\frac12} \sigma^{-\frac34} \psi^3.
\]
Using the formula for $D^2\phi(v,v)$ together with the preceding estimates, we obtain
\[
\operatorname{tr}_{L,g}\nabla_g^2\phi \leq \operatorname{tr}_L D^2\phi + C|x|\kappa^{-\frac12}\sigma^{-\frac34}\psi^3 + C|x|\kappa^{-1}\sigma^{-\frac32} \psi^2|\pi_Lx|^2.
\]
Combining this with the Euclidean estimate and using $|x|\leq r_0$, we get
\[
(\partial_t-\operatorname{tr}_{L,g}\nabla_g^2)\phi \leq - (c-Cr_0)\kappa^{-\frac12}\sigma^{-\frac34}\psi^3 - (48-Cr_0)\kappa^{-1}\sigma^{-\frac32} \psi^2|\pi_Lx|^2.
\]
Choosing $r_0$ sufficiently small gives
\[
(\partial_t-\operatorname{tr}_{L,g}\nabla_g^2)\phi\leq0.
\]
We now apply the first variation formula. With our convention for the mean curvature vector, for every smooth function $u=u(x,t)$ with compact support in the Fermi coordinate neighborhood,
\[
\frac{d}{dt}\int_{M_t}u\,d\mathcal{H}^n_g = \int_{M_t} \left(\partial_tu + \langle\nabla_g u, H \rangle_g - |H|_g^2u \right)\,d\mathcal{H}^n_g.
\]
The free-boundary condition says that the conormal $\nu$ is normal to $S$. It follows immediately that $\langle \nabla_{M_t} \Phi, \nu \rangle_g =0$ on $\partial M_t,$ and the boundary term arising from integration by parts vanishes. Therefore
\[
\int_{M_t} \left(\phi\, \operatorname{tr}_{L,g}\nabla_g^2 \rho -
\rho\, \operatorname{tr}_{L,g}\nabla_g^2 \phi
\right)
=
-\int_{M_t}
\langle H,
\phi\, \nabla_{M_t}\rho
-
\rho\, \nabla_{M_t}\phi
\rangle_g.
\]
Since $\Phi = \rho\phi$, the first variation formula gives
\[
\begin{aligned}
\frac{d}{dt}\int_{M_t} \Phi\,d\mathcal{H}^n_g
={}&
\int_{M_t} \left(\partial_t(\rho\phi) + \langle H, \nabla_{M_t}(\rho\phi) \rangle_g\right)\,d\mathcal{H}^n_g
- \int_{M_t}|H|_g^2 \Phi \,d\mathcal{H}^n_g.
\end{aligned}
\]
Expanding the derivatives and using the integration-by-parts, we obtain
\[
\begin{aligned}
\frac{d}{dt}\int_{M_t} \Phi \,d\mathcal{H}^n_g =
{}& 
\int_{M_t} \phi \left(\partial_t + \operatorname{tr}_{L,g}\nabla_g^2\right)\rho\,d\mathcal{H}^n_g + \int_{M_t} \rho \left(\partial_t - \operatorname{tr}_{L,g}\nabla_g^2 \right)\phi\,d\mathcal{H}^n_g \\
&+ 
2\int_{M_t} \langle H,  \phi\nabla_{M_t}\rho \rangle_g \,d\mathcal{H}^n_g - \int_{M_t}|H|_g^2 \Phi\,d\mathcal{H}^n_g.
\end{aligned}
\]
We next use the Gaussian estimate. Since $\nabla_g\rho=-\frac{g^{-1}x}{2\sigma}\rho$ and $|g^{-1}x-x| \leq |g^{-1}-\delta|\,|x| \leq C|x|^2,$ we have
\[
\left|\nabla_g\rho+\frac{x}{2\sigma}\rho\right|_g \leq C\frac{|x|^2}{\sigma}\rho.
\]
Thus, in the term involving the mean curvature, replacing $\nabla_g\rho$ by its Euclidean leading term
$-\frac{x\rho}{2\sigma}$ produces an error bounded by $C|H|_g \phi \frac{|x|^2}{\sigma} \rho.$ Let $\pi_{L,g}^\perp$ denote the $g$-orthogonal projection onto the $g$-normal space of $L$. Since $H$ is normal to $L$ with respect to $g$,
\[
\langle H, x \rangle_g = \langle H, \pi_{L,g}^\perp x \rangle_g.
\]
Moreover, since $|g-\delta| \leq C|x|$,
\[
\left|\pi_{L,g}^\perp x-\pi_L^\perp x\right|\leq C|x|^2.
\]
Consequently,
\[
\left||\pi_{L,g}^\perp x|_g^2-|\pi_L^\perp x|^2\right| \leq C|x|^3.
\]
Thus the Gaussian estimate can be written as
\[
\left|\left(\partial_t+\operatorname{tr}_{L,g}\nabla_g^2\right)\rho + \frac{|\pi_{L,g}^\perp x|_g^2}{4\sigma^2}\rho \right| \leq C\left(\frac{|x|}{\sigma} + \frac{|x|^3}{\sigma^2}\right)\rho.
\]
Using the first variation identity and the cutoff inequality, we therefore obtain
\[
\frac{d}{dt}\int_{M_t} \Phi \,d\mathcal{H}^n_g \leq -\int_{M_t} \left|H+\frac{\pi_{L,g}^\perp x}{2\sigma}
\right|_g^2 \Phi \,d\mathcal{H}^n_g + C\int_{M_t}\left(\frac{|x|}{\sigma} + \frac{|x|^3}{\sigma^2} +
\frac{|H|_g|x|^2}{\sigma}\right) \Phi \,d\mathcal{H}^n_g.
\]
We now absorb the mean curvature error. By Young's inequality,
\[
C\frac{|H|_g|x|^2}{\sigma} \leq \frac12 \left|H+\frac{\pi_{L,g}^\perp x}{2\sigma}\right|_g^2 + C\frac{|x|^3}{\sigma^2} + C\frac{|x|^4}{\sigma^2}.
\]
Multiplying by $\Phi$ and using the preceding estimate, we obtain
\[
\frac{d}{dt}\int_{M_t} \Phi \,d\mathcal{H}^n_g \leq -\frac{1}{2}\int_{M_t} \left|H+\frac{\pi_{L,g}^\perp x}{2\sigma} \right|_g^2 \Phi \,d\mathcal{H}^n_g + C\int_{M_t} \left(\frac{|x|}{\sigma}
+ \frac{|x|^3}{\sigma^2} + \frac{|x|^4}{\sigma^2}\right) \Phi \,d\mathcal{H}^n_g.
\]
Since $|x|\leq r_0$ on the support of $\Phi$, 
\[
\frac{|x|^4}{\sigma^2} \leq C\frac{|x|^3}{\sigma^2}.
\]
Therefore,
\[
\frac{d}{dt}\int_{M_t} \Phi \,d\mathcal{H}^n_g \leq -\frac{1}{2}\int_{M_t} \left|H+\frac{\pi_{L,g}^\perp x}{2\sigma}\right|_g^2 \Phi \,d\mathcal{H}^n_g + C\int_{M_t} \left(\frac{|x|}{\sigma} + \frac{|x|^3}{\sigma^2}\right) \Phi \,d\mathcal{H}^n_g.
\]
Set
\[
q:=1+\frac{1}{2n+2}>1.
\]
Using the Gaussian expression for $\rho$, we have
\[
\frac{|x|}{\sigma}\rho \leq 1+ \left(\frac{|x|}{\sigma}\rho\right)^q.
\]
Moreover,
\[
\left(\frac{|x|}{\sigma}\rho\right)^q = C(n)\,\sigma^{-\frac{q}{2}-\frac{nq}{2}}\left(\frac{|x|}{2\sqrt{\sigma}} e^{-\frac{|x|^2}{4\sigma}} \right)^q.
\]
Using $y^\beta e^{-y} \leq C(\beta,\gamma)e^{-(1-\gamma)y},$ with $\gamma=1-q^{-1}$, we obtain
\[
\left(\frac{|x|}{\sigma}\rho\right)^q \leq C(n)\sigma^{-\frac{nq}{2}-\frac{q}{2}+\frac n2}\rho.
\]
By the choice of $q$,
\[
-\frac{nq}{2} - \frac{q}{2}+\frac n2 = -\frac{(n+1)q-n}{2} = -\frac34.
\]
Hence,
\[
\frac{|x|}{\sigma}\rho \leq 1+C(n)\sigma^{-\frac34}\rho.
\]
Similarly,
\[
\frac{|x|^3}{\sigma^2}\rho \leq 1+ \left(\frac{|x|^3}{\sigma^2}\rho\right)^q.
\]
As above,
\[
\left(\frac{|x|^3}{\sigma^2}\rho \right)^q \leq C(n) \sigma^{-\frac{nq}{2}+\frac{3q}{2}-2q+\frac n2}\rho.
\]
Since
\[
-\frac{nq}{2}+\frac{3q}{2}-2q+\frac n2 = -\frac{nq}{2}-\frac q2+\frac n2 = -\frac34,
\]
we obtain
\[
\frac{|x|^3}{\sigma^2}\rho \leq 1+C(n)\sigma^{-\frac34}\rho.
\]
Since $\Phi = \rho\phi$ and $0 \leq \phi \leq1$, it follows that
\[
\left(\frac{|x|}{\sigma} + \frac{|x|^3}{\sigma^2}\right) \Phi = \phi \left(\frac{|x|}{\sigma}\rho + \frac{|x|^3}{\sigma^2}\rho\right) \leq C\sigma^{-\frac34}\Phi + C\phi.
\]
Consequently,
\[
\frac{d}{dt}\int_{M_t} \Phi \,d\mathcal{H}^n_g \leq C\sigma^{-\frac34}\int_{M_t} \Phi \,d\mathcal{H}^n_g + C\int_{M_t}\phi\,d\mathcal{H}^n_g.
\]
Choose $A=A(n)>0$ sufficiently large. Since $\sigma=-t,$ we have $\frac{d}{dt}\sigma^{\frac14} = -\frac14\sigma^{-\frac34},$ and hence
\[
\frac{d}{dt} \left(e^{A\sigma^{\frac14}} \int_{M_t} \Phi \,d\mathcal{H}^n_g\right) = e^{A\sigma^{\frac14}}
\frac{d}{dt}\int_{M_t} \Phi \,d\mathcal{H}^n_g - \frac{A}{4}\sigma^{-\frac34} e^{A\sigma^{\frac14}} \int_{M_t} \Phi\,d\mathcal{H}^n_g.
\]
Thus, after choosing $A$ sufficiently large,
\[
\frac{d}{dt} \left(e^{A\sigma^{\frac14}} \int_{M_t} \Phi \,d\mathcal{H}^n_g \right)\leq A e^{A\sigma^{\frac14}} \int_{M_t}\phi\,d\mathcal{H}^n_g.
\]
Next, we estimate the cutoff mass. As above, the boundary term vanishes by the evenness of $\phi$ across $S$ and the free-boundary condition. Since
\[
\left(\partial_t-\operatorname{tr}_{L,g}\nabla_g^2\right)\phi \leq 0,
\]
the first variation formula gives
\[
\frac{d}{dt} \int_{M_t}\phi\,d\mathcal{H}^n_g \leq \int_{M_t} \left(\partial_t\phi + \langle H,\nabla_g\phi\rangle_g - |H|_g^2\phi \right)\,d\mathcal{H}^n_g.
\]
Using integration by parts,
\[
\int_{M_t} \langle H, \nabla_g\phi\rangle_g \;d\mathcal{H}^n_g = -\int_{M_t} \operatorname{tr}_{L,g}\nabla_g^2\phi\,d\mathcal{H}^n_g,
\]
and therefore
\[
\frac{d}{dt} \int_{M_t}\phi\,d\mathcal{H}^n_g \leq - \int_{M_t}|H|_g^2\phi\,d\mathcal{H}^n_g + \int_{M_t} \left(\partial_t-\operatorname{tr}_{L,g}\nabla_g^2\right)\phi\,d\mathcal{H}^n_g \leq0.
\]
Consequently,
\[
\int_{M_t}\phi\,d\mathcal{H}^n_g \leq \int_{M_{-\beta_0\kappa^2}}\phi\,d\mathcal{H}^n_g \leq \mathcal E_0.
\]
Since $\sigma\leq\beta_0\kappa^2$, we have
\[
e^{A\sigma^{\frac14}} \leq e^{A(\beta_0\kappa^2)^{\frac14}} \leq C(n).
\]
Hence, after enlarging $A=A(n)$ if necessary,
\[
\frac{d}{dt}\left(e^{A\sigma^{\frac14}}\int_{M_t} \Phi \,d\mathcal{H}^n_g\right) \leq A\mathcal E_0.
\]
Therefore,
\[
\frac{d}{dt} \left[e^{A\sigma^{\frac14}} \int_{M_t} \Phi \,d\mathcal{H}^n_g + A\mathcal E_0(-t)\right] \leq 0.
\]
Thus
\[
t\longmapsto e^{A(-t)^{\frac14}} \int_{M_t} \Phi \,d\mathcal{H}^n_g + A\mathcal E_0(-t)
\]
is decreasing on $[-\beta_0\kappa^2,0)$. This proves the desired monotonicity formula.
\end{proof}

\begin{remark}
If the Fermi coordinates are Euclidean, so that $g=\delta$, then one may take $A=0$, and no $\mathcal E_0$ is needed. In this case,
\[
\frac{d}{dt}\int_{M_t} \Phi \,d\mathcal H^n \leq -\int_{M_t} \left|H+\frac{x^\perp}{2\sigma}\right|^2 \Phi \,d\mathcal H^n,
\]
where $x^\perp$ denotes the Euclidean normal projection of $x$ onto $T_xM_t$.
\end{remark}

We next record the local mass estimate that will be used below. Since we have established both the cutoff evolution inequality
\[
\left(\partial_t-\operatorname{tr}_{L,g}\nabla_g^2\right)\phi\leq 0
\]
and the corresponding first variation formula for the smooth free-boundary flow, the proof of \cite[Corollary 4.10]{edelen2020free} applies verbatim in our setting, with the Euclidean metric replaced by the Fermi metric $g$. In particular, there exist constants
\[
\gamma=\gamma(n)\in(0,1),\qquad c_0=c_0(n)>0,\qquad C=C(n)>0
\]
such that, for $\kappa$ sufficiently small, whenever all relevant balls are contained in $B_{r_0}(0)$, we have
\[
\mathcal H_g^n\bigl(M_{t+s}\cap B_{\gamma\kappa}(y)\bigr) \leq C\mathcal H_g^n\bigl(M_t\cap B_\kappa(y)\bigr)
\]
for every $y\in B_{r_0}(0)$, $t\geq 0$, and $0\leq s\leq c_0\kappa^2$.

Since $g$ and the Euclidean metric are uniformly equivalent on $B_{r_0}(0)$, after possibly enlarging $C$, the same estimate holds with $\mathcal H_g^n$ replaced by the Euclidean measure $\mathcal H^n$.

We now pass from the preceding one-ball estimate to a fixed spatial ball. Let $B_r(z)\subset B_{r_0}(0)$ and choose a Vitali covering $\{B_{\gamma\kappa}(y_i)\}_i$ of $B_r(z)$ with $y_i\in B_r(z)$. The corresponding balls $B_\kappa(y_i)$ have uniformly bounded overlap. Hence
\[
\begin{aligned}
\mathcal H_g^n(M_{t+s}\cap B_r(z)) &\leq \sum_i \mathcal H_g^n(M_{t+s}\cap B_{\gamma\kappa}(y_i)) \\
&\leq C\sum_i \mathcal H_g^n(M_t\cap B_\kappa(y_i)) \\
&\leq C\,\mathcal H_g^n(M_t\cap B_{r+\kappa}(z)),
\end{aligned}
\]
for every $0\leq s\leq c_0\kappa^2$.

We now pass to rescaled coordinates. The purpose of the Fermi coordinates in the preceding argument was to flatten the free boundary hypersurface, which remains static under the rescaling. More precisely, set
\[
M_\tau=e^{\frac{\tau}{2}}M_{-e^{-\tau}}, \qquad S_\tau=e^{\frac{\tau}{2}}S=S=\{x_{n+1}=0\}.
\]
Recall that $g_\tau(x) = g(e^{-\frac{\tau}{2}}x)$ denote the rescaled metric on the rescaled Fermi coordinates and the corresponding area measure is denoted by $d\mathcal H^n_{g_\tau}$. Define
\[
\rho(x)=(4\pi)^{-\frac n2}e^{-\frac{|x|^2}{4}},
\]
and
\[
\phi(x,\tau) = \eta\left(\kappa^{-\frac12}e^{-\frac{\tau}{4}}\left(|x|^2-\alpha\right)\right), \qquad \eta(s)=(1-s)_+^4,
\]
and set
\[
\Phi(x,\tau)=\rho(x)\phi(x,\tau).
\]

\begin{thm}\label{thm 2.3}
There exist constants $\beta_0=\beta_0(n)>0$ and $A=A(n)>0$ such that, for $\kappa$ sufficiently small, if
\[
\mathcal E_0\geq e^{-\frac{n\tau_0}{2}} \int_{M_{\tau_0}}\phi(x,\tau_0)\, d\mathcal H^n_{g_{\tau_0}}, \qquad \tau_0=-\log(\beta_0\kappa^2),
\]
then
\[
\tau\longmapsto e^{A e^{-\frac{\tau}{4}}} \int_{M_\tau} \Phi(x,\tau)\,d\mathcal H^n_{g_\tau} + A\mathcal E_0e^{-\tau}
\]
is decreasing on $[\tau_0,\infty)$.

Moreover, if $\partial M_\tau\subset S_\tau=S=\{x_{n+1}=0\}$ and the corresponding coordinates are Euclidean, so that $g_\tau = \delta$, then one may take $A=0$, and no $\mathcal E_0$ is needed. More precisely, for any fixed $T$,
\[
\frac{d}{d\tau} \int_{M_\tau} \Phi(x,T+\tau) \,d\mathcal H^n \leq -\int_{M_\tau} \left|H+\frac{x^\perp}{2}\right|^2 \Phi(x, T+\tau) \,d\mathcal H^n.
\]
\end{thm}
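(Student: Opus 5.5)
The plan is to deduce Theorem~\ref{thm 2.3} from Theorem~\ref{thm 2.1} by the change of variables $x=e^{\frac{\tau}{2}}y$, $t=-e^{-\tau}$, so that $\sigma=e^{-\tau}$. The key observation is that the weights transform exactly.

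First, with $\sigma=e^{-\tau}$, the unrescaled Gaussian is
\[
\rho(y,t)=(4\pi)^{-\frac n2}e^{\frac{n\tau}{2}}e^{-\frac{|x|^2}{4}}.
\]
Next, the cutoff argument becomes
\[
\left(\frac{\kappa^2}{\sigma}\right)^{\frac34}\frac{|y|^2-\alpha\sigma}{\kappa^2}
=\kappa^{-\frac12}\sigma^{\frac14}(|x|^2-\alpha)
=\kappa^{-\frac12}e^{-\frac{\tau}{4}}(|x|^2-\alpha),
\]
using $|y|^2=\sigma|x|^2$. So the cutoff in Theorem~\ref{thm 2.1} is precisely the rescaled $\phi(x,\tau)$. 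For the measure, the dilation by $e^{\frac{\tau}{2}}$ together with $g_\tau(x)=g(e^{-\frac\tau2}x)$ gives
\[
d\mathcal H^n_g(y)=e^{-\frac{n\tau}{2}}\,d\mathcal H^n_{g_\tau}(x).
\]
Combining these,
\[
\int_{M_t}\Phi\,d\mathcal H^n_g=\int_{M_\tau}\Phi(x,\tau)\,d\mathcal H^n_{g_\tau},
\]
because the powers $e^{\pm\frac{n\tau}{2}}$ cancel.

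With these identities in hand, the rest is bookkeeping. We have $(-t)^{\frac14}=e^{-\frac\tau4}$ and $-t=e^{-\tau}$. At $t=-\beta_0\kappa^2$ we get $\tau=\tau_0=-\log(\beta_0\kappa^2)$. The hypothesis on $\mathcal E_0$ transforms into
\[
\int_{M_{-\beta_0\kappa^2}}\phi\,d\mathcal H^n_g=e^{-\frac{n\tau_0}{2}}\int_{M_{\tau_0}}\phi(x,\tau_0)\,d\mathcal H^n_{g_{\tau_0}},
\]
which is exactly the stated condition. Since $\tau$ is increasing in $t$, the monotone decreasing quantity in $t$ becomes
\[
e^{Ae^{-\tau/4}}\int_{M_\tau}\Phi\,d\mathcal H^n_{g_\tau}+A\mathcal E_0e^{-\tau},
\]
which is then decreasing on $[\tau_0,\infty)$.

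For the Euclidean statement, I would use the Remark after Theorem~\ref{thm 2.1} applied to the time-translated flow, so that the cutoff is evaluated at $T+\tau$. This is the step needing care. In the Euclidean case the $t$-derivative bound is
\[
-\int\left|H+\frac{y^\perp}{2\sigma}\right|^2\Phi.
\]
Under the rescaling, $H_y=e^{\frac\tau2}H_x$ and $\frac{y^\perp}{\sigma}=e^{\frac\tau2}x^\perp$. Also $\frac{dt}{d\tau}=e^{-\tau}$ and the measure factor is $e^{-\frac{n\tau}{2}}$, which combines with $\rho$. Putting these together gives
\[
\frac{d}{d\tau}\int_{M_\tau}\Phi\le -\int_{M_\tau}\left|H+\frac{x^\perp}{2}\right|^2\Phi.
\]

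The main obstacle is the shift by $T$. The cutoff $\phi(x,T+\tau)$ corresponds to a different scale parameter $\kappa'=\kappa e^{-T/2}$ (or equivalently a time-shift). I would handle this by noting that the Euclidean computation in the Remark never used the relation between $\kappa$ and $r_0$, since there is no Fermi error. So one can replace $\kappa$ by $\kappa e^{-T/2}$, which absorbs $e^{-T/4}$ into the cutoff. It remains to check that the inequality $(\partial_t-\operatorname{tr}_LD^2)\phi\le0$ still holds in the rescaled variables, including the drift contribution $\frac12x\cdot\nabla\phi$. This follows by the same exact transformation, because the drift is just the chain-rule image of $\partial_t$.
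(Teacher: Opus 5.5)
Your proposal is correct and matches the paper's approach: the paper gives no separate proof and presents Theorem~\ref{thm 2.3} simply as Theorem~\ref{thm 2.1} and the following Remark rewritten under $x=e^{\frac{\tau}{2}}y$, $t=-e^{-\tau}$, with the Gaussian, cutoff, and area factors transforming exactly as you computed. One small slip: since $\kappa^{-\frac12}e^{-\frac{T+\tau}{4}}=(\kappa e^{\frac{T}{2}})^{-\frac12}e^{-\frac{\tau}{4}}$, the shift by $T$ corresponds to $\kappa'=\kappa e^{\frac{T}{2}}$ rather than $\kappa e^{-\frac{T}{2}}$; this is harmless, because in the Euclidean case the cutoff inequality $(\partial_t-\operatorname{tr}_L D^2)\phi\le 0$ holds for every $\kappa>0$ and all $t<0$ once $\alpha$ is large.
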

We also record the corresponding volume growth estimate. We assume that the initial surface satisfies the local area-ratio bound
\[
\mathcal H^n(M_0\cap B_r(z))\leq C_0r^n
\]
for the relevant $z$ and $r$. By the local mass estimate, this bound propagates forward in time. Therefore, after parabolic rescaling,
\[
\mathcal H^n_{g_\tau}(M_\tau\cap B_R(0)) \leq CR^n, \qquad 1\leq R\leq r_0e^{\frac{\tau}{2}}.
\]
Since $g_\tau$ is uniformly equivalent to the Euclidean metric, after possibly increasing $C$,
\[
\mathcal H^n(M_\tau\cap B_R(0)) \leq CR^n, \qquad 1\leq R\leq r_0e^{\frac{\tau}{2}}.
\]
We now define the localized Gaussian density associated with the monotonicity formula. Since the Fermi coordinates are only defined in a fixed neighborhood of the boundary point under consideration, the definition is local.

Throughout the remainder of the paper, we assume that all hypersurfaces under consideration satisfy a uniform local area ratio bound by $\lambda_0$. All constants appearing below are allowed to depend on $\lambda_0$ in addition to the other fixed parameters. 

Fix a spacetime point
\[
X_0=(x_0,t_0),\qquad x_0\in B_{\frac{r_0}{2}}(0),\qquad -1<t_0<0,
\]
and write
\[
\sigma=t_0-t,\qquad z=x-x_0.
\]
Let $x_0^*=(x_0',-x_{0,n+1})$ denote the reflection of $x_0$ across $S=\{x_{n+1}=0\}$. Set
\[
G_0=g(x_0).
\]
We denote by $\langle\cdot,\cdot\rangle_{G_0}$ and $|\cdot|_{G_0}$ the inner product and norm induced by the constant metric $G_0$.

For $\sigma=t_0-t>0$, define the frozen-metric Gaussian
\[
\rho_{X_0,G_0}(x,t) = (4\pi\sigma)^{-\frac n2} \exp\left(-\frac{|x-x_0|_{G_0}^2}{4\sigma}\right),
\]
and the corresponding cutoff
\[
\phi_{X_0,G_0}(x,t) = \eta\left(\left(\frac{\kappa^2}{\sigma}\right)^{\frac34} \frac{|x-x_0|_{G_0}^2-\alpha\sigma}{\kappa^2}\right), \qquad \eta(s)=(1-s)_+^4.
\]
Set
\[
\Phi_{X_0,G_0} = \rho_{X_0,G_0}\phi_{X_0,G_0}.
\]
For the reflected point $X_0^*=(x_0^*,t_0),$ we define $\Phi_{X_0^*,G_0}$ using the same frozen metric $G_0$.

We define the localized Gaussian density by
\[
\Theta_g(M,X_0,r) =
\begin{cases}
\displaystyle \int_{M_{t_0-r^2}} \left(\Phi_{X_0,G_0} + \Phi_{X_0^*,G_0} \right)(x,t_0-r^2)\,d\mathcal H^n_g, & |x_{0,n+1}|<\dfrac{\kappa}{10}, \\[2ex]
\displaystyle \int_{M_{t_0-r^2}} \Phi_{X_0,G_0}(x,t_0-r^2)\, d\mathcal H^n_g, & |x_{0,n+1}|\geq\dfrac{\kappa}{10}.
\end{cases}
\]
For sufficiently small $\beta_0$, the reflected cutoff is supported in $\{x_{n+1}<0\}$ when $|x_{0,n+1}| = \frac{\kappa}{10}$, so $\Phi_{X_0^*,G_0}=0$ on $M_t$ and the two definitions agree near $|x_{0,n+1}| = \frac{\kappa}{10}$. We now state the corresponding localized monotonicity formula.

\begin{thm}\label{thm 2.4}
There exist constants $A=A(n)>0$ and $\beta_0=\beta_0(n)>0$ such that the following holds.

Let
\[
X_0=(x_0,t_0),\qquad x_0\in B_{\frac{r_0}{2}}(0),\qquad -1<t_0<0,
\]
and suppose that the flow is defined on the time interval $[t_*,t_0]$ where $t_0-\beta_0\kappa^2 \leq t_* < t_0.$ Suppose that $\mathcal E_0$ satisfies
\[
\mathcal E_0 \geq \int_{M_{t_*}} \phi_{X_0,G_0}(x,t_*)\,d\mathcal H_g^n.
\]
Then
\[
r\longmapsto e^{A\sqrt r}\Theta_g(M,X_0,r) + A\mathcal E_0r^2
\]
is increasing for
\[
0<r\leq \min\left\{\sqrt{t_0 - t_*},\sqrt{t_0+1}\right\}.
\]
\end{thm}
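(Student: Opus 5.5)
We will derive Theorem~\ref{thm 2.4} by re-running the proof of Theorem~\ref{thm 2.1}, now centered at $X_0$ with the frozen metric $G_0$, and then changing the time parameter from $t$ to $r=\sqrt{t_0-t}$. Set $\sigma=t_0-t$ and first consider the interior case $|x_{0,n+1}|\ge \kappa/10$. Here the support of $\phi_{X_0,G_0}$ stays away from $S$ once $\beta_0$ is small, so no boundary term appears.

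The first step is the Gaussian estimate. Because $G_0=g(x_0)$ is constant, $\rho_{X_0,G_0}$ is an exact backward heat kernel for the constant metric $G_0$. Since $|g(x)-G_0|\le C|x-x_0|$, the computation in Theorem~\ref{thm 2.1} gives
\[
\Bigl|(\partial_t+\operatorname{tr}_{L,g}\nabla_g^2)\rho_{X_0,G_0}+\tfrac{|\pi^\perp_{L,g}(x-x_0)|_g^2}{4\sigma^2}\rho_{X_0,G_0}\Bigr|\le C\Bigl(\tfrac{|z|}{\sigma}+\tfrac{|z|^3}{\sigma^2}\Bigr)\rho_{X_0,G_0}.
\]
The Christoffel symbols are bounded, and this is what produces the $|z|/\sigma$ term. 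In the same way, the cutoff inequality $(\partial_t-\operatorname{tr}_{L,g}\nabla^2_g)\phi_{X_0,G_0}\le 0$ follows exactly as before, after choosing $\alpha$ large and $r_0$ small.

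The second step combines these estimates through the first variation formula, the Young absorption of the $|H|_g|z|^2/\sigma$ error, and the $q$-power trick. This yields
\[
\frac{d}{dt}\int\Phi_{X_0,G_0}\le C\sigma^{-3/4}\int\Phi_{X_0,G_0}+C\int\phi_{X_0,G_0},
\]
together with the monotone bound $\int_{M_t}\phi_{X_0,G_0}\le\mathcal E_0$ for $t\ge t_*$. It follows that $e^{A\sigma^{1/4}}\int\Phi+A\mathcal E_0\sigma$ is nonincreasing in $t$. Substituting $\sigma=r^2$ gives $e^{A\sqrt r}$ and $A\mathcal E_0 r^2$, and a function that decreases in $t$ increases in $r$.

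The third step, which is the main obstacle, is the boundary case $|x_{0,n+1}|<\kappa/10$. There $\Phi_{X_0,G_0}$ alone is not even across $S$, so the boundary term $\int_{\partial M_t}\langle\nabla\Phi,\nu\rangle_g$ from integration by parts does not vanish. We handle this with the symmetrized weight $\Psi=\Phi_{X_0,G_0}+\Phi_{X_0^*,G_0}$. Since $g_{i,n+1}=0$ and $g_{n+1,n+1}=1$, we may choose tangential coordinates at $x_0'$ so that $G_0$ is block diagonal; this uses that $g_{ij}(x)$ only varies in $x_{n+1}$ at order $|x|$, and otherwise one absorbs the discrepancy. With this choice, $|x-x_0|_{G_0}=|x-x_0^*|_{G_0}$ on $S$ and $\partial_{x_{n+1}}\Psi=0$ on $S$. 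By the free boundary condition the conormal is $g$-normal to $S$, i.e. parallel to $\partial_{n+1}$ in Fermi coordinates, so the boundary term vanishes.

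For the reflected kernel, the error terms involve $|x-x_0^*|$ rather than $|x-x_0|$. On $\overline\Omega$ we have $|x-x_0^*|\ge|x-x_0|$, while the metric error is still controlled by $|x-x_0|\le|x-x_0^*|$. Hence the same $C(|z^*|/\sigma+|z^*|^3/\sigma^2)$ bounds hold, and they are absorbed identically. The mass hypothesis $\mathcal E_0\ge\int\phi_{X_0,G_0}$ also controls the reflected cutoff on $\Omega$, since $\phi_{X_0^*,G_0}\le\phi_{X_0,G_0}$ on $\{x_{n+1}\ge0\}$; at worst $A$ doubles.

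Finally, the two regimes agree near $|x_{0,n+1}|=\kappa/10$ by the support remark, so the monotonicity holds uniformly for $0<r\le\min\{\sqrt{t_0-t_*},\sqrt{t_0+1}\}$.
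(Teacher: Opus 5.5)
Your proposal is correct and follows essentially the same route as the paper. You rerun the proof of Theorem~\ref{thm 2.1} with the Gaussian and cutoff centered at $x_0$ and the metric frozen at $G_0=g(x_0)$, handle $|x_{0,n+1}|\ge\kappa/10$ directly because the support misses $S$, use the symmetrized kernel $\Phi_{X_0,G_0}+\Phi_{X_0^*,G_0}$ when $|x_{0,n+1}|<\kappa/10$, and then substitute $\sigma=r^2$. The symmetrized kernel is even across $S$ because $G_0$ inherits the block form $g_{i,n+1}=0$, $g_{n+1,n+1}=1$ exactly, so no special choice of tangential coordinates or absorption of a discrepancy is needed.

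Your two extra checks, taking $x_{0,n+1}\ge0$, tighten points the paper's proof passes over. The first bounds the reflected-kernel metric error via $|x-x_0|\le|x-x_0^*|$ on $\{x_{n+1}\ge0\}$; the paper only notes $|x-x_0|\le C\kappa$ on the reflected support. The second uses $\phi_{X_0^*,G_0}\le\phi_{X_0,G_0}$ there to control the reflected cutoff mass up to a factor $2$; the paper asserts $\int_{M_t}(\phi_0+\phi_0^*)\,d\mathcal H^n_g\le\mathcal E_0$ from a hypothesis on $\phi_0$ alone.
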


\begin{proof}
Set $t \in [t_*,t_0),$ and let
\[
\sigma=t_0-t,\qquad r=\sqrt{\sigma},\qquad z=x-x_0, \qquad G_0=g(x_0),
\]
and write
\[
\Phi_0 = \Phi_{X_0,G_0},\quad \phi_0=\phi_{X_0,G_0}, \qquad \Phi_0^* = \Phi_{X_0^*,G_0}, \quad \phi_0^*=\phi_{X_0^*,G_0}.
\]
The proof follows the argument of Theorem~\ref{thm 2.1}, with the Gaussian and cutoff centered at $x_0$ and with the metric frozen at $G_0 = g(x_0)$. Since $G_0$ is constant, the frozen-metric Gaussian calculation is reduced to the Euclidean one by a linear change of coordinates. On the relevant supports,
\[
|x - x_0|\leq C\kappa,\qquad |g - G_0|\leq C\kappa, \qquad |Dg|\leq C.
\]
In the reflected case $|x_{0,n+1}|<\frac{\kappa}{10}$, we also have $|x-x_0|\leq C\kappa$ on the support of $\Phi_0^*$. Thus all metric and cutoff error estimates are uniformly controlled as in Theorem~\ref{thm 2.1}. After choosing $\beta_0$ sufficiently small, we obtain
\[
(\partial_t-\operatorname{tr}_{L,g}\nabla_g^2)\phi_0\leq0, \qquad (\partial_t \operatorname{tr}_{L,g} \nabla_g^2)\phi_0^*\leq0,
\]
and the corresponding Gaussian estimate
\[
\frac{d}{dt}\int_{M_t} \Phi \,d\mathcal H_g^n \leq C\sigma^{-\frac34} \int_{M_t}\Phi \,d\mathcal H_g^n + C\int_{M_t} \phi\,d\mathcal H_g^n
\]
for each kernel whenever its boundary term vanishes. Suppose first that $|x_{0,n+1}|<\frac{\kappa}{10}.$ Set
\[
F = \Phi_0+ \Phi_0^*,\qquad \varphi = \phi_0+\phi_0^*.
\]
In Fermi coordinates, throughout the Fermi neighborhood,
\[
g_{\alpha,n+1}=0,\qquad g_{n+1,n+1}=1.
\]
Hence the constant metric $G_0=g(x_0)$ has the same block form, and reflection across $S$ preserves the $G_0$-norm in the tangential variables. Since $x_0^*$ is the reflection of $x_0$,
\[
F(x',-x_{n+1},t) = F(x',x_{n+1},t), \qquad \varphi(x',-x_{n+1},t)=\varphi(x',x_{n+1},t).
\]
Thus
\[
\partial_{x_{n+1}}F=\partial_{x_{n+1}}\varphi=0 \qquad\text{on }S.
\]
The free-boundary condition therefore makes the boundary terms in the tangential integrations by parts vanish. Adding the two Gaussian estimates gives
\[
\frac{d}{dt}\int_{M_t}F\,d\mathcal H_g^n \leq C\sigma^{-\frac34}\int_{M_t}F\,d\mathcal H_g^n + C\int_{M_t} \varphi\,d\mathcal H_g^n.
\]
Moreover, using the cutoff inequality above, the first variation formula, and the same integration by parts as in Theorem~\ref{thm 2.1},
\[
\begin{aligned}
\frac{d}{dt}\int_{M_t}\varphi\,d\mathcal H_g^n &\leq \int_{M_t} \left(\partial_t\varphi + \langle H,\nabla_g\varphi \rangle_g -|H|_g^2\varphi \right)d\mathcal H_g^n\\ 
&= \int_{M_t} (\partial_t-\operatorname{tr}_{L,g}\nabla_g^2)\varphi\,d\mathcal H_g^n - \int_{M_t} |H|_g^2\varphi \,d\mathcal H_g^n \leq 0.
\end{aligned}
\]
Consequently,
\[
\int_{M_t}\varphi\,d\mathcal H_g^n\leq\mathcal E_0.
\]
Now suppose $|x_{0,n+1}|\geq\frac{\kappa}{10}.$ Choosing $\beta_0$ sufficiently small, the cutoff support satisfies
\[
\operatorname{supp} \Phi_0\subset B_{\frac{\kappa}{20}}(x_0),
\]
and therefore is disjoint from $S$. There is therefore no boundary term, and the same argument gives
\[
\frac{d}{dt}\int_{M_t}\Phi_0\,d\mathcal H_g^n \leq C\sigma^{-\frac34}\int_{M_t}\Phi_0\,d\mathcal H_g^n + C\int_{M_t}\phi_0\,d\mathcal H_g^n,
\]
while
\[
\frac{d}{dt}\int_{M_t}\phi_0\,d\mathcal H_g^n\leq0.
\]
Thus in both cases, with
\[
F_{X_0,G_0} =
\begin{cases}
\Phi_0+\Phi_0^*,& |x_{0,n+1}|<\frac{\kappa}{10},\\
\Phi_0,& |x_{0,n+1}|\geq\frac{\kappa}{10},
\end{cases}
\]
and
\[
\varphi_{X_0,G_0} =
\begin{cases}
\phi_0+\phi_0^*,& |x_{0,n+1}|<\frac{\kappa}{10},\\
\phi_0,& |x_{0,n+1}|\geq\frac{\kappa}{10},
\end{cases}
\]
we have
\[
\frac{d}{dt}\int_{M_t}F_{X_0,G_0}\,d\mathcal H_g^n \leq C\sigma^{-\frac34}\int_{M_t}F_{X_0,G_0}\,d\mathcal H_g^n + C\mathcal E_0.
\]
Since $\frac{d}{dt}\sigma^{\frac14}=-\frac14\sigma^{-\frac34},$ choosing $A=A(n)$ sufficiently large yields
\[
\frac{d}{dt} \left(e^{A\sigma^{\frac14}} \int_{M_t}F_{X_0,G_0}\,d\mathcal H_g^n\right) \leq A\mathcal E_0.
\]
Putting $\sigma=r^2$ and $t=t_0-r^2$, we obtain
\[
\frac{d}{dr} \left(e^{A\sqrt r}\int_{M_{t_0-r^2}}F_{X_0,G_0}\,d\mathcal H_g^n \right) \geq-2A\mathcal E_0r.
\]
Hence
\[
\frac{d}{dr}\left[e^{A\sqrt r}\int_{M_{t_0-r^2}}F_{X_0,G_0}\,d\mathcal H_g^n + A\mathcal E_0r^2 \right] \geq 0.
\]
This finishes the proof.
\end{proof}

The following local regularity statement is the analogue of the local regularity theorem \cite[Theorem~8.1]{edelen2020free} (see also \cite{white2005local}) in the Fermi-coordinate setting. We use the localized Gaussian density $\Theta_g$. The proof follows the blow-up argument of \cite{edelen2020free}, with the Euclidean metric replaced by the Fermi metric $g$. More precisely, under a sequence of parabolic blow-ups, we consider the corresponding rescaled flows $M^i$ with respect to the rescaled metrics $g^i$. After passing to a subsequence, the metrics $g^i$ converge locally smoothly to a constant positive-definite metric $G_\infty$. After a fixed linear change of coordinates, $G_\infty$ becomes the Euclidean metric, while the flattened free boundary remains a hyperplane. Depending on the behavior of the rescaled distance to the free boundary, the limiting flow is either an ordinary Euclidean mean-curvature flow in $\mathbb{R}^{n+1}$, or a Euclidean mean-curvature flow in a half-space with free boundary on the limiting hyperplane. Thus the blow-up argument reduces to the corresponding Euclidean interior or free-boundary argument. We omit the proof.

\begin{thm}\label{thm 2.5}
There exist constants $\epsilon=\epsilon(n,r_0)>0$ and $C=C(n,r_0)<\infty$ such that the following holds.

Let \(M=(M_t)_{t\in(-1,0)}\) be a smooth, properly embedded \(n\)-dimensional mean curvature flow in \(\mathbb R^{n+1}\) with free boundary on a smooth hypersurface \(S\), with Fermi coordinates defined
on \(B_{r_0}(0)\), and let \(U\subset B_{r_0}(0)\). Let $X_0=(x_0,t_0)\in U\times(-1,0)$ and $0<R\leq c(n)^{-1}r_0$ satisfy \(t_0-R^2>-1\). Suppose that
\[
\Theta_g(M,X,r)<1+\epsilon
\]
for every \(X\in U\times(-1,0)\) and \(0<r<R\). Then
\[
|A|\leq \frac{C}{R}
\]
on \(M_t\cap B_R(x_0)\) for all $t_0-R^2<t<t_0$.
\end{thm}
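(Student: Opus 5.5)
We argue by contradiction via a blow-up, following White's local regularity argument as adapted in \cite[Theorem~8.1]{edelen2020free}. After translating and parabolically rescaling by $R$, it suffices to show that $\sup_{t_0-R^2<t<t_0}\sup_{M_t\cap B_R(x_0)} R|A|$ is bounded. We use the localized quantity
\[
\sup_{(x,t)} \; d\bigl((x,t),\partial_{\mathcal P}\bigr)\,|A|(x,t),
\]
where $d$ is the parabolic distance to the parabolic boundary of $B_R(x_0)\times(t_0-R^2,t_0)$. Suppose this quantity is unbounded along a sequence of flows $M^i$ with metrics $g^i$ satisfying the density hypothesis with $\epsilon_i\to0$. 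By a standard point-picking argument we choose points $X_i=(x_i,t_i)$ and scales $Q_i=|A|(X_i)\to\infty$ such that $|A|\le 2Q_i$ on a parabolic neighborhood of $X_i$ of size $d_iQ_i\to\infty$ measured in units of $Q_i^{-1}$. We then rescale by $Q_i$ about $X_i$.

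Under this rescaling the metrics $g^i(x_i+Q_i^{-1}\cdot)$ converge locally smoothly to the constant metric $G_\infty=\lim g(x_i)$. This convergence uses the Fermi bounds $|\partial g|,|\partial^2 g|\le C$, which are multiplied by $Q_i^{-1}$ and $Q_i^{-2}$ respectively. The rescaled flows satisfy the $g$-mean curvature flow with uniformly bounded curvature. Parabolic Schauder estimates then give bounds on all higher derivatives, and near $S$ they apply through the perturbed Neumann condition of Section~\ref{section 2.3}. We therefore obtain smooth subsequential convergence to a limit flow with $|A|(0,0)=1$ and $|A|\le 2$.

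We separate two cases according to $Q_i\,\mathrm{dist}(x_i,S)$. If this quantity tends to infinity, the limit is an ancient Euclidean mean curvature flow in $\mathbb R^{n+1}$, after the linear change of coordinates that takes $G_\infty$ to $\delta$. If it stays bounded, the limit is a flow in a half-space with free boundary on a hyperplane. In this second case the reflected term in $\Theta_g$ becomes relevant: it is built from the frozen metric $G_0$, and we use that $G_0$ has the block structure $g_{\alpha,n+1}=0$, $g_{n+1,n+1}=1$.

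The key step is passing the density bound to the limit. By Theorem~\ref{thm 2.4}, $e^{A\sqrt r}\Theta_g+A\mathcal E_0r^2$ is monotone. After rescaling, both error factors disappear, since $e^{A\sqrt{r/Q_i}}\to1$ and $\mathcal E_0r^2/Q_i^2\to0$; here $\mathcal E_0$ is bounded by the area-ratio bound $\lambda_0$. The frozen Gaussians converge to Euclidean Gaussians, and the cutoffs converge to $1$ on compact sets. Hence the limit has Gaussian density ratios, or reflected density ratios in the boundary case, at most $1$ at every point and scale. By the equality case of Huisken's monotonicity in the interior case, and of the free boundary monotonicity in Theorem~\ref{thm 2.3} (which reduces to the doubled flow) in the boundary case, every point has density exactly $1$ and the flow is self-similar about every point. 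The limit is therefore a static multiplicity-one hyperplane, or a half-hyperplane orthogonal to the boundary. This contradicts $|A|(0,0)=1$.

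The main obstacle is making the density convergence rigorous, for two reasons. First, the centers $X$ in the hypothesis range over $U$, and the rescaled limit must see density at most $1+\epsilon_i$ at all nearby points, including reflected points close to $S$. This requires continuity of $\Theta_g$ across the threshold $|x_{0,n+1}|=\kappa/10$, which is handled by the remark after the definition. Second, we must check that the metric perturbation error terms in Theorem~\ref{thm 2.4} scale away uniformly.
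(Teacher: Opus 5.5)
Your proposal is correct and follows the same route the paper sketches: a White/Edelen point-picking blow-up in which the rescaled Fermi metrics converge to the constant metric $G_\infty$; a linear change of coordinates, compatible with the block form of $G_\infty$ so that $S$ and the reflection are preserved, reduces to the Euclidean setting; and the dichotomy on $Q_i\,\mathrm{dist}(x_i,S)$ produces an interior or half-space free-boundary limit whose (reflected) Gaussian density is at most $1$, forcing it to be flat. One small caveat: bounding $d(\cdot,\partial_{\mathcal P})\,|A|$ only yields $|A|\le C/R$ on a smaller cylinder such as $B_{R/2}(x_0)\times(t_0-R^2/4,t_0)$, not on all of $B_R(x_0)\times(t_0-R^2,t_0)$. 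That smaller-cylinder form is how White's and Edelen's theorems actually hold, and the full-cylinder version needs the hypothesis on a larger parabolic region, so your ``it suffices'' step should be adjusted accordingly.
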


The following pseudolocality statement is the analogue, in the present setting, of the graphical persistence result of Ilmanen-Neves-Schulze \cite{ilmanen2019short}. It says that an initially small graph remains a controlled graph on a smaller spatial scale for a short time, assuming a uniform bound on the area ratios. In the present setting, we work in Fermi coordinates, where the ambient Euclidean metric is represented by the metric $g$, and use the local regularity theorem above, formulated in terms of the localized Gaussian density $\Theta_g$, to obtain the corresponding graphical persistence result.

\begin{thm}\label{thm 2.6}
Fix $\eta>0$. Then there exist $r_1, r_2, \delta > 0$ with $r_2\ll r_1\ll r_0$, such that the following holds for every $y$ with $B_{2r_1}(y)\subset B_{r_0}(0)$. If $t_1 + r_2^2 < 0$ and $M_{t_1}$ is a normal graph over $\mathscr{C}_{t_1} \cap \mathbb{R}^{n+1}_+ \cap B_{r_1}(y)$ with $C^2$ norm at most $\delta$, then, for every $t\in[t_1,t_1+r_2^2]$, $M_t\cap B_{r_2}(y)$ remains a normal graph over $\mathscr{C}_t\cap\mathbb{R}^{n+1}_+\cap B_{r_2}(y)$, with $C^2$ norm at most $\eta$.
\end{thm}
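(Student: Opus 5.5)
We would prove Theorem~\ref{thm 2.6} by contradiction and compactness, following the scheme of Ilmanen--Neves--Schulze, with the local regularity Theorem~\ref{thm 2.5} playing the role of White's regularity theorem. The plan has three steps: first get Gaussian density control near $1$, then use Theorem~\ref{thm 2.5} to get curvature bounds, and then use smooth compactness and uniqueness of the limit to upgrade curvature bounds to small graphical norm.

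\textbf{Step 1 (density control at the initial time).} Fix $\eta$ and suppose $M_{t_1}\cap B_{r_1}(y)$ is a $\delta$-small $C^2$ normal graph over $\mathscr C_{t_1}\cap\mathbb R^{n+1}_+\cap B_{r_1}(y)$. Here the shrinking half-cylinder $\mathscr C_t$ has curvature scale $\sqrt{-t}$. By first choosing $r_1$ small relative to the scale of $\mathscr C_{t_1}$, or by absorbing the cylinder's curvature into the model, the initial slice is close in $C^2$ to a smooth hypersurface with free boundary. Such a hypersurface has localized Gaussian density $\Theta_g$ close to $1$ at all small scales and at all nearby centers, including centers near $S$. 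For centers near $S$ this uses the reflected kernel $\Phi_{X_0^*,G_0}$, and the relevant fact is that a half-plane orthogonal to $S$ has reflected density exactly $1$. The mass bound $\lambda_0$ controls the contribution from outside $B_{r_1}(y)$, because the cutoff $\phi$ localizes at scale $\kappa\sim r_1$. Concretely, we aim to show: for every $\epsilon'$ there are $\delta, r_1$ such that $\Theta_g(M,X,r)<1+\epsilon'/2$ for all $X=(x,t)$ with $x\in B_{r_1/2}(y)$, $t=t_1+r^2$, and $r$ below a fixed fraction of $r_1$.

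\textbf{Step 2 (propagation and curvature bounds).} By the localized monotonicity Theorem~\ref{thm 2.4}, the quantity $e^{A\sqrt r}\Theta_g + A\mathcal E_0 r^2$ is increasing in $r$. The extra factor $e^{A\sqrt r}$ and the term $A\mathcal E_0r^2$ are $1+O(\sqrt{r_1})$ and $O(\lambda_0 r_1^2)$ respectively. Hence, after shrinking $r_1$, we get
\[
\Theta_g(M,X,r)<1+\epsilon
\]
for all $X\in B_{r_1/2}(y)\times(t_1,t_1+r_2^2]$ and all $r$ below a fixed multiple of $r_2$, where $\epsilon$ is the constant of Theorem~\ref{thm 2.5}. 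The point is that every such backward parabolic ball reaches time $t_1$ inside the region controlled in Step~1. Theorem~\ref{thm 2.5} then gives $|A|\le C/r_2$ on $M_t\cap B_{r_2}(y)$ for $t\in[t_1,t_1+r_2^2]$. Combining this with the free boundary condition, local parabolic estimates in Fermi coordinates give bounds on higher derivatives of the second fundamental form as well.

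\textbf{Step 3 (smallness of the graph).} This is where we expect the main obstacle: the curvature bound from Step~2 is of size $C/r_2$, which does not by itself make the $C^2$ norm at most $\eta$. We would argue by contradiction. Suppose there are sequences with $\delta_i\to0$ such that graphicality with $C^2$ norm $\le\eta$ fails at some time in $[t_1^i,t_1^i+r_2^2]$. Rescale so that $r_2=1$. The metrics $g$ converge to a constant metric, which we normalize to $\delta$, and the uniform estimates from Step~2 give smooth subsequential convergence up to the boundary. The limit is a smooth free-boundary flow on a half-space whose initial slice is the model half-cylinder piece. Uniqueness of smooth free-boundary flows with this initial data, locally and for a short time, shows that the limit coincides with $\mathscr C_t\cap\mathbb R^{n+1}_+$. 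This contradicts the assumed failure of the $C^2$-$\eta$ bound.

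We expect the uniqueness/identification of the limit to be the delicate point. To avoid needing a local uniqueness theorem, we would first try a barrier argument in place of identifying the limit: use shrinking spheres or cylinders as comparison surfaces, reflected across $S$, together with the perturbed Neumann condition from Section~\ref{section 2.3}. This keeps the limit within $o(1)$ of $\mathscr C_t$ in $C^0$. Interpolating this $C^0$ closeness with the higher-derivative bounds from Step~2 then yields the $C^2$ smallness directly.
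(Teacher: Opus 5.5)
Your Steps~1 and~2 are essentially the paper's proof. You compare the Gaussian integral over the initial graph with that over $\mathscr C_{t_1}\cap\mathbb R^{n+1}_+$, using the reflected kernel near $S$ and the area-ratio bound for the tail. You then push $\Theta_g<1+\epsilon$ down from scale $\sqrt{t-t_1}$ with Theorem~\ref{thm 2.4} and apply Theorem~\ref{thm 2.5}. The paper finishes in one sentence: the curvature bound and initial $C^1$-smallness give, after shrinking $\delta$ and $r_2$, a normal graph with $C^2$ norm at most $\eta$. Your objection that $|A|\le C/r_2$ cannot by itself yield this is correct, so Step~3 is where you go beyond the paper. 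As written, however, it does not close the argument.

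The identification of the limit by ``uniqueness \dots\ locally and for a short time'' is false. With $r_1/r_2$ fixed, the hypothesis prescribes the initial slice only on a bounded ball, and a parabolic equation without lateral data has no local uniqueness. Whatever enters through $\partial B_{r_1}(y)$ affects $B_{r_2}(y)$ at positive times, by an amount small in $r_2/r_1$ but not zero. To reach a contradiction you must also send $r_2^i/r_1^i\to0$. You also need $r_2^i\to0$, which is what actually makes the rescaled metrics converge to a constant one. Then the limiting initial datum is a complete (half-)cylinder or (half-)plane, and you can invoke a uniqueness theorem for complete flows, e.g.\ by reflecting across the flat limiting boundary and using uniqueness for complete flows of bounded curvature.

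That route in turn needs control up to $t=t_1$, which Step~2 does not provide. The initial-slice estimate controls $\Theta_g(M,(x,t),r)$ only for $r\le\sqrt{t-t_1}$. For larger $r$ the kernel is evaluated on $M_{t-r^2}$ with $t-r^2<t_1$, about which nothing is assumed. So your claim of density control for all $r$ below a fixed multiple of $r_2$ is unjustified, and Theorem~\ref{thm 2.5} gives at best $|A|\lesssim (t-t_1)^{-1/2}$. Near $t_1$ you need a separate estimate using the $C^2$-small initial data, e.g.\ a localized maximum-principle bound for the curvature of the graph.

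The barrier fallback has the same scale problem. Spheres of radius comparable to $r_2$ vanish in time comparable to $r_2^2$ and give no $o(1)$ control. The barriers must therefore live at scale $r_1$ (or the cylinder's scale), and the resulting $C^0$ error is $O(r_2/r_1)$. That error has to be made small through the choice of $r_2/r_1$, not through $\delta\to0$.

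Two further caveats. First, a compactness argument at scales $r_2^i\to0$ only gives smallness of $r_2^{-1}|u|+|\nabla u|+r_2|\nabla^2u|$. That is the scale-invariant form of the statement, not absolute $C^2$-smallness. Second, Step~1 requires $r_2^2\ll -t_1$. Otherwise, near the axis, the density of $\mathscr C_{t_1}$ at scale $r_2$ is close to the entropy of the cylinder rather than to $1$. Choosing $r_1$ relative to $\sqrt{-t_1}$, as you do, is therefore an extra hypothesis beyond the stated quantifiers. The paper implicitly makes the same assumption when it says the cylinders stay uniformly regular, and it should be made explicit.
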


\begin{proof}
We first establish the required Gaussian density estimate using the initial slice. Let $x\in B_{r_2}(y)$ and $0<r\leq r_2$, and set $X=(x,t_1+r^2)$. We have $t_1 + r^2 < t_1 + r_2^2 < 0,$ and the time slice appearing in the definition of $\Theta_g(M,X,r)$ is precisely $M_{t_1}$. Thus
\[
\Theta_g(M,X,r) = \int_{M_{t_1}} \left(\Phi_{X,G_0} + \Phi_{X^*,G_0} \right)\,d\mathcal H_g^n,
\]
when $X$ lies in the boundary region, with the analogous expression involving only $\Phi_{X,G_0}$ when $X$ lies away from the boundary. Splitting the localized Gaussian density into the contribution from $B_{r_1}(y)$ and its complement, we obtain
\[
\Theta_g(M,X,r) = \int_{M_{t_1}\cap B_{r_1}(y)} \left(\Phi_{X,G_0} + \Phi_{X^*,G_0} \right)\,d\mathcal H_g^n + \int_{M_{t_1}\setminus B_{r_1}(y)}\left(\Phi_{X,G_0} + \Phi_{X^*,G_0} \right)\,d\mathcal H_g^n,
\]
with the reflected terms omitted in the interior case.

On $M_{t_1}\cap B_{r_1}(y)$ we write
\[
F(p)=p + u(p) \nu(p), \qquad p\in\mathscr{C}_{t_1} \cap \mathbb{R}^{n+1}_+\cap B_{r_1}(y).
\]
Since $u$ is a normal graph and $\|u\|_{C^2}\leq\delta$, the graphical Jacobian satisfies 
\[
J_gF\leq 1+C\delta.
\]
Moreover, the first and second derivatives of $F$ are uniformly close to those of the inclusion of $\mathscr{C}_{t_1} \cap \mathbb{R}^{n+1}_+$, and hence the Gaussian integral over the graph can be compared with the corresponding Gaussian integral over $\mathscr{C}_{t_1} \cap \mathbb{R}^{n+1}_+$. By choosing $\delta$ sufficiently small, we obtain
\[
\int_{M_{t_1}\cap B_{r_1}(y)} \Phi_{X,G_0}\,d\mathcal H_g^n \leq (1+C\delta) \int_{\mathscr{C}_{t_1} \cap \mathbb{R}^{n+1}_+} \Phi_{X,G_0}\,d\mathcal H_g^n + C\exp\left(-\frac{(r_1-r_2)^2}{Cr^2}\right).
\]
The same estimate holds for the reflected kernel in the boundary region, by the reflection argument used in the proof of Theorem~\ref{thm 2.4}.

The contribution from $M_{t_1}\setminus B_{r_1}(y)$ is estimated using the Gaussian decay. Since $x\in B_{r_2}(y), |z-x|\geq r_1-r_2$ for $z\in M_{t_1}\setminus B_{r_1}(y)$, and hence, by the local area-ratio bound,
\[
\int_{M_{t_1}\setminus B_{r_1}(y)} \left(\Phi_{X,G_0} + \Phi_{X^*,G_0}\right)\,d\mathcal H_g^n \leq C\exp\left(-\frac{(r_1-r_2)^2}{Cr^2}\right).
\]
Thus, after choosing $\frac{r_2}{r_1}$ sufficiently small, this contribution can be made smaller than $\frac{\epsilon}{8}$.

It remains to compare with the reference cylinder. Since $t_1+r_2^2<0$, after choosing $r_2$ sufficiently small, the reference cylinders $\mathscr{C}_t$ remain uniformly regular on the relevant time interval. Thus, on scales $r\le r_2$, the localized Gaussian density of $\mathscr{C}_{t_1}\cap\mathbb{R}^{n+1}_+$ is uniformly close to the density of its tangent model, with the error tending to zero as $r_2 \to 0$. Hence
\[
\Theta_g(\mathscr{C}_{t_1}\cap\mathbb{R}^{n+1}_+,X,r) \le 1+\frac{\epsilon}{4}.
\]
Combining the preceding estimates, and choosing first $\frac{r_2}{r_1}$ sufficiently small and then $\delta$ sufficiently small, we obtain
\[
\Theta_g(M,X,r)<1+\frac{\epsilon}{2}
\]
for every $x\in B_{r_2}(y)$ and $0<r\leq r_2$.

We now propagate this estimate to the spacetime region using the almost-monotonicity formula in Theorem~\ref{thm 2.4}. Let
\[
X=(x,t), \qquad x\in B_{r_2}(y), \qquad t\in(t_1,t_1 + r_2^2], \qquad t<0.
\]
Set $\rho=\sqrt{t-t_1}.$ Then $\rho \leq \,r_2.$ For every $0 < r \leq \rho$, Theorem~\ref{thm 2.4} gives
\[
e^{A\sqrt r}\Theta_g(M,X,r)+A\mathcal E_0r^2 \leq e^{A\sqrt{\rho}}\Theta_g(M,X,\rho) +A\mathcal E_0\rho^2.
\]
Since $t = t_1 +\rho^2$ and $\rho \leq r_2,$ the initial-slice estimate gives
\[
\Theta_g(M,X,\rho)<1+\frac{\epsilon}{2}.
\]
Choosing $r_2$ sufficiently small in the almost-monotonicity inequality, we obtain
\[
\Theta_g(M,X,r)<1+\epsilon.
\]
Thus the density hypothesis of Theorem~\ref{thm 2.5} is satisfied in the corresponding parabolic neighborhood, and hence the curvature estimate of Theorem~\ref{thm 2.5} gives a uniform bound for $|A_{M_t}|$ on a smaller parabolic neighborhood.

Since the reference flows $\mathscr{C}_t \cap \mathbb{R}^{n+1}_+$ have uniformly bounded curvature on the time interval, the initial $C^1$-smallness and the curvature bound imply that, after decreasing $\delta$ and $r_2$, the normal distance from $M_t$ to $\mathscr{C}_t \cap \mathbb{R}^{n+1}_+$ remains within the Fermi tubular neighborhood and the normal projection onto $\mathscr{C}_t \cap \mathbb{R}^{n+1}_+$ is one-to-one on the relevant component. Thus $M_t$ can be written as a normal graph over $\mathscr{C}_t \cap \mathbb{R}^{n+1}_+$. The standard relation between the second fundamental form of a normal graph and its graph function then gives a uniform $C^2$ bound for the graph function. By choosing $\delta$ and $r_2$ sufficiently small, this bound is at most $\eta$, and hence the $C^2$ norm of the graph remains at most $\eta$ throughout the stated time interval.
\end{proof}

The same argument also applies when $M_t$ is a small graph over a smooth mean curvature flow, provided the reference flow remains in the Fermi-coordinate neighborhood and has uniformly controlled geometry on the relevant parabolic region.

\subsection{Jacobi Fields}\label{section 2.5} 

We now describe the Jacobi fields on the free boundary half-cylindrical self-shrinkers. Let $\mathscr{C}_+$ denote either of the two half-cylindrical self-shrinkers, and let $\eta$ denote the outward unit conormal to $\partial\mathscr{C}_+$ in $\mathscr{C}_+$. Along $\partial\mathscr{C}_+$, $\eta$ is normal to the supporting hyperplane $S=\{x_{n+1}=0\}$. We define
\[
K_+ = \left\{u\in W^{2,2}(\mathscr{C}_+): Lu=0\text{ on }\mathscr{C}_+,\quad \frac{\partial u}{\partial\eta}=0 \text{ on }\partial\mathscr{C}_+ \right\}.
\]
It follows from \cite[Lemma~3.25]{colding2015uniqueness} and \cite[Proposition~3.1, Corollary~3.6]{colding2019regularity} that the kernel of the Jacobi operator on $\mathscr{C}$ decomposes as
\[
K=K_1\oplus K_2,
\]
where $K_1$ is generated by the rotational Jacobi fields and $K_2$ is generated by the quadratic fields. More precisely,
\[
K_1 = \operatorname{span} \left\{z_i\phi_\alpha(y) \right\},
\]
where $\{\phi_\alpha\}$ is a basis of the first spherical harmonics, and
\[
K_2 = \operatorname{span} \left\{z_i z_j-2\delta_{ij}: 1\leq i\leq j\leq k \right\}.
\]
Reflection across $\partial\mathscr{C}_+$ defines an isometric involution
\[
\mathcal{R}:\mathscr{C}\to\mathscr{C},
\]
whose fixed-point set is $\partial\mathscr{C}_+$. Since the Jacobi operator is invariant under $\mathcal{R}$, a Jacobi field on $\mathscr{C}_+$ satisfying the Neumann boundary condition extends evenly across $\partial\mathscr{C}_+$. Conversely, the restriction of an even Jacobi field on $\mathscr{C}$ satisfies the Neumann boundary condition on $\partial\mathscr{C}_+$. Hence
\[
K_+ \cong \left\{v\in K: v\circ\mathcal{R}=v \right\}.
\]
We therefore determine $K_+$ by imposing the evenness condition on the two components of $K$.

\medskip

\noindent
\textbf{(i) The axis is tangent to the boundary.}

In this case,
\[
\mathscr{C}_+ = \mathbb{S}^{n-k}_{\sqrt{2(n-k)}}\times\mathbb{R}^k_+,
\]
with boundary given by $z_k=0$. The reflection is
\[
z_k\longmapsto -z_k.
\]
Thus the rotational fields $z_i\phi_\alpha(y)$ are even precisely when $i<k$, and hence
\[
K_1^+ = \operatorname{span} \left\{z_i\phi_\alpha(y): 1\leq i\leq k-1 \right\}.
\]
For the quadratic fields, all modes with $i,j<k$ are even, as is the mode $z_k^2-2$. The mixed modes $z_i z_k$, $i<k$, are odd. Therefore
\[
K_2^+ = \operatorname{span} \left\{z_i z_j-2\delta_{ij}: 1\leq i\leq j\leq k-1 \right\} \oplus \operatorname{span}\{z_k^2-2\}.
\]
Consequently,
\[
K_+ = K_1^+\oplus K_2^+.
\]

\medskip

\noindent
\textbf{(ii) The axis is orthogonal to the boundary.}

In this case,
\[
\mathscr{C}_+ = \mathbb{S}^{n-k}_{+,\sqrt{2(n-k)}}\times\mathbb{R}^k,
\]
where $y_{n-k+1}\geq0.$ The reflection is
\[
y_{n-k+1}\longmapsto -y_{n-k+1}.
\]
Since the quadratic fields are independent of $y$, they are all even, and therefore
\[
K_2^+=K_2.
\]
The first spherical harmonics are generated by $y_1,\ldots,y_{n-k+1}.$ Now the functions $y_1,\ldots,y_{n-k}$ are even under the reflection, while $y_{n-k+1}$ is odd. Hence
\[
K_1^+ = \operatorname{span} \left\{z_i y_a: 1\leq i\leq k,\; 1\leq a\leq n-k \right\}.
\]
Thus
\[
K_+ = K_1^+\oplus K_2.
\]
In both cases, the free boundary Jacobi fields are obtained by restricting the full-cylinder Jacobi fields to the even subspace determined by the reflection across the boundary.

\subsection{The Comparison Flows}\label{section 2.6} 

We now adapt the construction of the rescaled flow \cite[Section~4]{ghosh2025cylindrical} to the free-boundary setting. The argument is the same as in the cylindrical setting, so we only indicate the modifications needed to incorporate the Neumann boundary condition. We denote by $L^2$ and $W^{k,2}$ the weighted Sobolev spaces with respect to $\rho$, and write $H^k=W^{k,2}$.

Let $\mathscr C_+$ denote either of the two half-cylinders described above, and let $\eta$ denote the outward unit conormal to $\partial\mathscr C_+$. All the notation below is understood with respect to the chosen half-cylinder. In particular, the space $K_2^+$ and its dimension may depend on the configuration. Set $m:=\dim K_2^+.$ We denote
\[
u_\alpha=\sum_{j=1}^m\alpha_jU_j, \qquad \alpha = (\alpha_1,\ldots,\alpha_m)\in\mathbb R^m,
\]
where $\{U_1,\ldots,U_m\}$ is the fixed orthonormal basis of $K_2^+$ chosen above. We impose the homogeneous Neumann condition
\[
\partial_\eta u=0 \qquad\text{on }\partial\mathscr C_+.
\]
Let $L_N$ denote the Neumann realization of $L$ on $\mathscr C_+$, with domain
\[
\operatorname{Dom}(L_N) = \left\{u\in H^2(\mathscr C_+): \partial_\eta u=0 \text{ on }\partial\mathscr C_+ \right\}.
\]
The estimates in Lemmas~4.2 and 4.3 carry over to the Neumann realization $L_N$ with the same bounds. Consequently, the proof of Proposition~4.4 applies with $L$ replaced by $L_N$ and with the nonlinear construction restricted to the Neumann class. Thus, for $R$ sufficiently large and $\delta R^6\leq C_\ell^{-1},$ there exists a map
\[
N_R: \ker L_N\cap \{u:\|u\|_{L^2}\leq\delta\} \longrightarrow C^{2+\ell,\alpha}(\mathscr C_+\cap B_R(0))
\]
such that
\[
\begin{aligned} 
\|N_R(u)\|_{C^{2+\ell,\alpha}(\mathscr C_+\cap B_R(0))} &\leq C_\ell R^6\|u\|_{L^2}^2,\\
P_{(\ker L_N)^\perp} Q\bigl(\chi_R(u+N_R(u))\bigr) &=L_N(N_R(u)),\\
\partial_\eta N_R(u)&=0 \qquad\text{on }\partial\mathscr C_+.
\end{aligned}
\]
We can fix a sufficiently small constant $c_1>0$. For each $\alpha$, define $R_\alpha$ by
\[
|\alpha|^{\frac94}e^{\frac{R_\alpha^2}{8}}=c_1,
\]
and set $N= N_{R_\alpha}$. Then, as in the cylindrical construction \cite{ghosh2025cylindrical}, there exists a solution $\alpha(\tau)$ with $\alpha(0) = \alpha$ defined for $0\leq\tau\leq c_0|\alpha|^{-1}$, which satisfies
\[
\partial_\tau u_{\alpha(\tau)} + L_N(N(u_{\alpha(\tau)})) = Q\bigl(\chi_{R_\alpha}(u_{\alpha(\tau)}+N(u_{\alpha(\tau)}))\bigr).
\]
We can then imitate the proof as in \cite{ghosh2025cylindrical} of the construction of the rescaled mean curvature flow which is graphical over $\mathscr C_+$ on $\mathscr C_+\cap B_{R_\alpha-1}(0)$. The function $u_{\alpha(\tau)}+N(u_{\alpha(\tau)})$ serves as the leading-order approximation. Let
\[
\mathcal P_N:L^2(\mathscr C_+)\longrightarrow L^2(\mathscr C_+)
\]
denote the $L^2$-orthogonal projection onto the subspace spanned by the Neumann eigenfunctions corresponding to eigenvalues $\leq \max\left\{-\frac12,-\frac1k\right\}.$ Thus $\mathcal P_N$ is the free-boundary counterpart of the spectral projection used in the cylindrical construction. The parabolic correction is constructed exactly as in Lemma~4.6, with $L$ and $\mathcal P$ replaced by $L_N$ and $\mathcal P_N$ respectively. The proof is unchanged, since all the required estimates above hold in the Neumann class. We may therefore choose $T_\alpha=|\alpha|^{-\frac18}.$ The fixed-point argument as in \cite{ghosh2025cylindrical} then gives, for $|\alpha|$ sufficiently small, a fixed point $v_\alpha$ of the map $\mathcal{F}_\alpha(v) := U_\alpha(v)$, where $U_\alpha(v)$ solves
\begin{equation*}
\begin{cases}
(\partial_\tau - L_N) U_{\alpha}(v) = F_{\alpha}(v) , \\
U_{\alpha}(v)(0) = -\int_0^{T_\alpha} e^{-sL_N} (1- \mathcal{P}_N) F_{\alpha}(v)(s) ds. &
\end{cases}
\end{equation*}
where
$$F_{\alpha}(v)(\tau) := Q(\chi_{R_\alpha} (u_{\alpha(\tau)} + N(u_{\alpha(\tau)}) + v)) - Q(\chi_{R_\alpha} (u_{\alpha(\tau)
} + N(u_{\alpha(\tau)}))) - \partial_\tau (\chi_{R_\alpha} N(u_{\alpha(\tau)})).$$
where $\alpha(0) = \alpha$, and $e^{\tau L_N}$ denotes the heat semigroup for the operator $L_N$. The parabolic correction is chosen in the Neumann class, so
\[
\partial_\eta v_\alpha(\tau)=0 \qquad\text{on }\partial\mathscr C_+.
\]
Moreover, for some large $\ell, \|v_\alpha\|_{H^\ell} \leq |\alpha|^{\frac52}$ and $\ell$ is large enough so that we can use Gaussian Sobolev embedding to get $\|\chi_{R_\alpha} v_\alpha\|_{C^{2,\alpha}} \leq C_\ell e^{\frac{R_\alpha^2}{8}} \|v_\alpha\|_{H^\ell} \leq C_\ell c_1|\alpha|^{\frac14}.$
Consequently, the graph of
\[
u_{\alpha(\tau)} + N(u_{\alpha(\tau)}) + v_\alpha(\tau), \qquad \alpha(0)=\alpha,
\]
over $\mathscr C_+\cap B_{R_\alpha-1}(0)$ evolves by rescaled mean curvature flow on $[0,T_\alpha].$ Moreover,
\[
\partial_\eta \left(u_{\alpha(\tau)} + N(u_{\alpha(\tau)}) + v_\alpha(\tau) \right) =0 \qquad \text{on } \partial\mathscr C_+,
\]
so the resulting flow satisfies the free-boundary condition. We denote this flow by $\mathscr T^{\alpha,\mathscr C_+}.$ We also denote by $\mathcal W^{\alpha,\mathscr C_+}$ the graph of $u_\alpha+N(u_\alpha)$ over $\mathscr C_+\cap B_{R_\alpha-1}(0).$

The following lemma follows exactly as in Lemma~4.8 of \cite{ghosh2025cylindrical} (see also Proposition~4.8 of \cite{zhu2020ojasiewicz}). It provides a lower bound for the functional $\phi$ along $\mathscr{T}^{\alpha,\mathscr{C}_+}$

\begin{lemma} {\label{lemma 2.7}}
    There exists a constant $\kappa_0 >0,$ independent of $\alpha,$ such that for all sufficiently small $|\alpha|$ and all $\tau \in [0, T_\alpha]$, we have
    $$\|\phi \big(\mathscr{T}^{\alpha,\mathscr{C}_+}_\tau \big)\|_{L^2} \geq \kappa_0 |\alpha|^2.$$
\end{lemma}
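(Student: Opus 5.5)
The plan is to expand $\phi(\mathscr T^{\alpha,\mathscr C_+}_\tau)$ to leading order in $\alpha$ and show that this leading term is quadratic and nondegenerate on $K_2^+$. At $\tau=0$ the time slice is the graph of $w=u_\alpha+N(u_\alpha)+v_\alpha(0)$ over $\mathscr C_+\cap B_{R_\alpha-1}(0)$. For a graph over the shrinker one has
\[
\phi = -\bigl(Lw+Q(w)\bigr)\,(1+O(|w|+|\nabla w|)).
\]
Since $u_\alpha\in K_2^+\subset\ker L_N$, this becomes
\[
Lw+Q(w)=L_N N(u_\alpha)+L_N v_\alpha+Q(\chi_{R_\alpha}(u_\alpha+N(u_\alpha)+v_\alpha)).
\]
We substitute the defining identity $P_{(\ker L_N)^\perp}Q(\chi_R(u+N_R(u)))=L_N(N_R(u))$. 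After that the term $L_NN(u_\alpha)$ cancels the orthogonal part of $Q$, and what is left is $P_{\ker L_N}Q(\chi_{R_\alpha}(u_\alpha+N(u_\alpha)))$, plus contributions from $v_\alpha$ and from the cutoff region. Because each element of $\ker L_N$ is $L^2$-orthogonal to the range of $L_N$ (the Neumann realization is self-adjoint), we obtain
\[
\|\phi\|_{L^2}\ \ge\ \|P_{K_+}Q(u_\alpha)\|_{L^2}-\text{errors}.
\]

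The main step is to show $\|P_{K_2^+}Q_2(u_\alpha)\|_{L^2}\ge 2\kappa_0|\alpha|^2$, where $Q_2$ is the quadratic part of $Q$. For the full cylinder this is the content of \cite[Lemma~4.8]{ghosh2025cylindrical} and \cite[Proposition~4.8]{zhu2020ojasiewicz}. There $Q_2(u_\alpha)$ for $u_\alpha=\sum \alpha_{ij}(z_iz_j-2\delta_{ij})$ has projection onto $K_2$ of the form $c\,(\alpha^2)_{ij}$ up to lower order. This is the obstruction that makes the cylinder non-integrable, and it gives a lower bound $c|\alpha|^2$ by homogeneity and compactness of the unit sphere in $K_2$. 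I would carry this over by reflection. An element $u_\alpha\in K_2^+$ is the restriction of an even field on $\mathscr C$. $Q$ commutes with $\mathcal R$, so $Q_2(u_\alpha)$ is even. The $L^2(\mathscr C_+)$ inner product of even functions is half the $L^2(\mathscr C)$ one. Hence
\[
P_{K_2^+}Q_2(u_\alpha)=\bigl(P_{K_2}Q_2(\tilde u_\alpha)\bigr)\big|_{\mathscr C_+},
\]
where $\tilde u_\alpha$ is the even extension. Its norm is therefore bounded below by $c|\alpha|^2/\sqrt2$, using the cylindrical bound restricted to the even subspace. I expect the main obstacle here: I have to check that the cylindrical quadratic form $\alpha\mapsto \alpha^2$ stays nondegenerate on the even subspace. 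In case (i) the even subspace consists of block-diagonal symmetric matrices, and in case (ii) it is all of $K_2$. In both cases $\alpha^2\neq0$ for $\alpha\neq0$ because $\operatorname{tr}\alpha^2=|\alpha|^2$, so nondegeneracy holds.

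Next I would control the errors. By the construction, $\|N(u_\alpha)\|_{C^{2+\ell,\alpha}}\le CR_\alpha^6|\alpha|^2$, so $Q(u_\alpha+N)-Q(u_\alpha)=O(R_\alpha^6|\alpha|^3)$ on $B_{R_\alpha}$. The cubic terms of $Q$ give $O(R_\alpha^{6}|\alpha|^3)$ in weighted $L^2$. The term $v_\alpha$ has $\|v_\alpha\|_{H^\ell}\le|\alpha|^{5/2}$. The cutoff region contributes $O(e^{-R_\alpha^2/8}\,\mathrm{poly}(R_\alpha))=O(|\alpha|^{9/4}\mathrm{poly}(\log|\alpha|^{-1}))$ by the choice $|\alpha|^{9/4}e^{R_\alpha^2/8}=c_1$. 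Since $R_\alpha\sim\sqrt{\log|\alpha|^{-1}}$, all of these are $o(|\alpha|^2)$. This gives $\|\phi\|_{L^2}\ge\kappa_0|\alpha|^2$ at $\tau=0$.

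Finally, for $\tau\in[0,T_\alpha]$ with $T_\alpha=|\alpha|^{-1/8}$, the ODE $\alpha'(\tau)=-P_{K_2^+}Q(\cdots)$ gives $|\alpha'|\le C|\alpha|^2$. Therefore $|\alpha(\tau)-\alpha|\le C|\alpha|^{2-1/8}$, and $|\alpha(\tau)|\ge|\alpha|/2$. The same estimate applied at time $\tau$, with $v_\alpha(\tau)$ still $O(|\alpha|^{5/2})$ and with $\partial_\tau N$ being $O(|\alpha|^3R^6)$, gives the bound uniformly in $\tau$ after shrinking $\kappa_0$. The Neumann condition plays a role only through the self-adjointness of $L_N$, which is used in the orthogonality step.
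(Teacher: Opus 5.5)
Your proposal is correct and follows the paper's route. The paper simply defers to Lemma~4.8 of \cite{ghosh2025cylindrical} (Proposition~4.8 of \cite{zhu2020ojasiewicz}): project $\phi$ onto the Neumann kernel so that $L_N N$ and $L_N v_\alpha$ drop out, identify the leading term as the nondegenerate quadratic obstruction $\alpha\mapsto\alpha^2$, and check that the remaining terms are $o(|\alpha|^2)$ because $R_\alpha\sim\sqrt{\log|\alpha|^{-1}}$. Your identification $P_{K_2^+}f=(P_{K_2}\tilde f)|_{\mathscr C_+}$ for even $f$ is exactly the free-boundary input the paper leaves implicit, and the nondegeneracy you flag as the main obstacle is in fact automatic, since the full-cylinder bound holds on all of $K_2$ and hence on its even subspace.
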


\section{Non-concentration}\label{section 3}

In this section, we prove the non-concentration result in Proposition~\ref{prop 3.2}, similar to Li-Székelyhidi [\citealp{li2024singularity}, Lemma ~34]. We say that a hypersurface $M$ is a $c$-graph over a hypersurface $N$ on the ball $B_R(0)$ if we can write $M \cap B_R(0)$ as the normal graph over $N \cap B_R(0)$ of a function $u$ satisfying $|u|, |\nabla u|, |\nabla^2 u| \leq c.$  

Let $r_0$ be the radius of the Fermi coordinate neighborhood introduced in Section~\ref{section 2.1}. Fix $\eta>0$ sufficiently small, with its precise choice to be determined in Proposition~\ref{prop 3.2}. Here and below, by $M_\tau$ we mean the pullback of the rescaled hypersurface under the Fermi coordinates. In particular, $M_{\tau_0}$ is defined on $B_{r_0 e^{\frac{\tau_0}{2}}}(0)$. 

Let $R$ be the largest radius satisfying
\[
R\leq \min\left\{R_\alpha-1,\; r_0e^{\frac{e^{\tau_0}}{2}}\right\}
\]
such that $M_{\tau_0}$ is an $\eta$-graph over $\mathscr{T}^{\alpha,\mathscr{C}_+}_\tau$ in $B_R(0)$. We define
\[
D_{\mathscr{T}^{\alpha,\mathscr{C}_+}_\tau}(M_{\tau_0}) = \left(\int_{M_{\tau_0}\cap B_R(0)} d_{\mathscr{T}^{\alpha,\mathscr{C}_+}_\tau}^2 e^{-\frac{|x|^2}{4}} \,d\mathcal H^n \right)^{\frac12} + e^{-\frac{R^2}{8p_0}}.
\]
Here $d_{\mathscr{T}^{\alpha,\mathscr{C}_+}_\tau}$ denotes the distance to $\mathscr{T}^{\alpha,\mathscr{C}_+}_\tau$, while $p_0>1$ is a parameter chosen sufficiently close to $1$; its choice will be specified in Proposition~\ref{prop 3.2}. For $\epsilon>0$, it will be convenient to write $R_\epsilon =(8p_0|\ln\epsilon|)^{\frac12}.$

We also define the corresponding Gaussian weighted distance. Let $R$ be the largest radius satisfying 
\[
R\leq \min\left\{R_\alpha-1,\; r_0e^{\frac{e^{\tau_0}}{2}}\right\}
\]
for which $M_{\tau_0}$ can be represented as an $\frac{\eta}{2}$-graph over $\mathcal{W}^{\alpha,\mathscr C_+}$ in $B_R(0)$. We then set
\[
D_{\mathcal{W}^{\alpha,\mathscr C_+}}(M_{\tau_0}) = \left(\int_{M_{\tau_0} \cap B_R(0)} d_{\mathcal{W}^{\alpha,\mathscr C_+}}^2 e^{-\frac{|x|^2}{4}} \,d\mathcal H^n \right)^{\frac12} + e^{-\frac{R^2}{8p_0}}.
\]
When $M$ is a shrinker in $\mathbb{R}^{n+1}_+$ with free boundary on $\{x_{n+1}=0\}$, the additional restriction $R\leq r_0e^{\frac{e^{\tau_0}}{2}}$ is unnecessary. In this case, we take $R$ to be the largest radius satisfying $R\leq R_\alpha-1$ for which $M_{\tau_0}$ can be represented as an $\frac{\eta}{2}$-graph over $\mathcal{W}^{\alpha,\mathscr C_+}$ in $B_R(0)$.

The following is a consequence of the pseudolocality estimate discussed in \ref{section 2.5}. 

\begin{lemma}\label{lemma 3.1}
Let $\eta_0>0$. Then there exist constants $\eta_1>0$ and $R_1>0,$ depending only on $\eta_0$ with the following property. Suppose that $\tau+1\le T_\alpha$ and $M_{\tau_0}$ is an $\eta_1$-graph over
$\mathscr{T}^{\alpha,\mathscr C_+}_{\tau}$ on $B_R(0)$ for some $R>0$. Then for every $s\in[0,1]$, $M_{\tau_0+s}$ is an $\eta_0$-graph over $\mathscr{T}^{\alpha,\mathscr C_+}_{\tau+s}$ on $B_{\min\{e^{\frac{s}{2}}(R-R_1), R_\alpha-1\}}(0).$
\end{lemma}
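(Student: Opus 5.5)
Undo the parabolic rescaling and reduce the statement to the graphical persistence result Theorem~\ref{thm 2.6}, applied to the unrescaled flow $M_t$ and the unrescaled comparison flow. Write $t=-e^{-\tau}$. The rescaled comparison flow $\mathscr{T}^{\alpha,\mathscr C_+}_{\tau+s}$, $s\in[0,1]$, corresponds to a genuine free boundary mean curvature flow $\widehat{\mathscr T}_t = \sqrt{-t}\,\mathscr{T}^{\alpha,\mathscr C_+}_{-\log(-t)-\tau_0+\tau}$ on the time interval $[-e^{-\tau_0},-e^{-\tau_0-1}]$. This flow has Neumann boundary on $S$ by construction of $v_\alpha$ and $N_R$. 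The hypothesis $\tau+1\le T_\alpha$ guarantees it is defined on the whole interval. Since $|\alpha|$ is small, its graph function over $\mathscr C_+$ has $C^{2,\alpha}$ norm $\le C c_1|\alpha|^{1/4}$ on $B_{R_\alpha-1}$. So after rescaling, its geometry is uniformly controlled on every parabolic ball of radius comparable to $\sqrt{-t}$ centered inside $e^{-\tau_0/2}B_{R_\alpha-1}$. This is the regime in which the remark after Theorem~\ref{thm 2.6} (graphs over a smooth reference flow) applies.

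Next I cover the ball $B_{\min\{e^{s/2}(R-R_1),R_\alpha-1\}}$ at rescaled time $\tau_0+s$ by points $y$. Each such $y$ traces back, in rescaled coordinates at time $\tau_0$, to $e^{-s/2}y$, which lies in $B_{R-R_1}$. This is exactly why the radius shrinks by $R_1$ and grows by $e^{s/2}$: points move under the drift $\frac12 x$. I then apply Theorem~\ref{thm 2.6} (in its reference-flow version) at the original scale, centered at $e^{-\tau_0/2}\cdot e^{-s/2}y$. I take $r_1$ to be the rescaled radius $R_1$ times $e^{-\tau_0/2}$, and $\delta=\eta_1$, with conclusion norm $\eta_0$.

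The main obstacle is that Theorem~\ref{thm 2.6} gives persistence only for time $r_2^2$ with $r_2\ll r_1$, whereas I need a unit rescaled time interval. To handle this, I first normalize via scale invariance: after parabolic rescaling by $e^{\tau_0/2}$ and using $|g_\tau-\delta|\le CRe^{-\tau/2}$, the constants $r_1,r_2,\delta$ of Theorem~\ref{thm 2.6} become scale-free quantities in rescaled units. I then iterate Theorem~\ref{thm 2.6} a fixed number $N\approx r_2^{-2}$ of times, using a chain $\eta_1=\delta_N<\delta_{N-1}<\dots<\delta_0=\eta_0$, where each $\delta_{j}$ is the $\delta$ produced by Theorem~\ref{thm 2.6} from target $\eta=\delta_{j-1}$. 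Each step shrinks the good ball by at most $r_1$, so the total loss is at most $N r_1 =: R_1$. Both $N$ and the chain depend only on $\eta_0$, since the comparison flow's curvature bounds are uniform in $\alpha$. The uniform area-ratio bound $\lambda_0$ enters only through Theorem~\ref{thm 2.6}.

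Finally, I convert graphs over $\widehat{\mathscr T}$ at the original scale back into $\eta_0$-graphs over $\mathscr{T}^{\alpha,\mathscr C_+}_{\tau+s}$. The $C^2$ norms are scale-invariant at the relevant scale up to factors $e^{\pm1/2}$, which I absorb by shrinking $\eta_1$. I also intersect with $B_{R_\alpha-1}$, since the comparison flow is only defined there. The metric perturbation from Fermi coordinates contributes errors of order $R^2e^{-\tau_0/2}$, which are negligible given $R\le r_0e^{e^{\tau_0}/2}$ and the choice of $r_0$.
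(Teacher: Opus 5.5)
Your strategy (undo the rescaling, apply the graphical pseudolocality of Theorem~\ref{thm 2.6} in its reference-flow version to $M$ and the comparison flow at scale $e^{-\tau_0/2}$, and read off the factor $e^{s/2}$ from the drift) is what the paper means when it calls the lemma a consequence of pseudolocality; the paper gives no more detail than that. There is, however, a gap in the step that turns short-time persistence into persistence over a unit rescaled time.

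\textbf{The iteration does not close.} In Theorem~\ref{thm 2.6} all of $r_1,r_2,\delta$ are produced from the target $\eta$. Moreover, $r_2$ genuinely shrinks with $\eta$: in its proof, the final $C^2$ bound $\le\eta$ is obtained only ``by choosing $\delta$ and $r_2$ sufficiently small,'' i.e.\ by shortening the time interval.

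In your chain, the step with target $\delta_{j-1}$ therefore lasts only $r_2(\delta_{j-1})^2$. These lengths decrease as the chain gets longer, so ``$N\approx r_2^{-2}$'' is circular. Nothing prevents $\sum_{j\ge1}r_2(\delta_{j-1})^2$ from converging to a number below the required length $1-e^{-1}$ (in the picture dilated by $e^{\tau_0/2}$). If that happens, no finite chain reaches $s=1$.

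What is missing is a persistence step whose duration does not depend on the accuracy demanded. One way to get it:
\begin{itemize}
\item Use Theorems~\ref{thm 2.4}--\ref{thm 2.6} with one fixed target $\eta^\ast$. This gives a fixed step $(r_2^\ast)^2$ and a fixed number of steps, and you use it only to obtain curvature bounds.
\item After each step, restore closeness $\le\delta(\eta^\ast)$ by a separate stability estimate for the graph function. Once curvature is bounded, the equation of Section~\ref{section 2.2}, with the perturbed Neumann condition of Section~\ref{section 2.3}, has bounded coefficients. A barrier argument on a ball shrunk by a large $\Lambda$ then bounds $\sup|u|$ by $C\sup|u(0)|$, plus $C\eta^\ast\Lambda^{-2}$ from the lateral boundary, plus the metric forcing. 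Interior estimates upgrade this to $C^2$.
\item Choosing $\Lambda$ large and $\eta_1$ small closes the iteration, with $R_1$ of order $N(r_1^\ast+\Lambda)$.
\end{itemize}
A compactness-and-contradiction argument over the whole unit interval would also work.

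\textbf{The metric-error justification is not valid as written.} The bound you quote, $R\le r_0e^{e^{\tau_0}/2}$, is a misprint in the paper for $r_0e^{\tau_0/2}$. Even with the corrected bound, $R^2e^{-\tau_0/2}$ can be as large as $r_0^2e^{\tau_0/2}$, and the bare deviation $|g_{\tau_0}-\delta|\le CRe^{-\tau_0/2}$ is only $\le Cr_0$. That is not small compared with an arbitrary $\eta_0$.

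This matters because the comparison flow is a Euclidean rather than a $g_\tau$-flow in Fermi coordinates (so it is not a genuine free boundary flow for $g$). The deviation acts as a forcing term that moves $M$ off the comparison flow over unit time no matter how small $\eta_1$ is. You therefore need it small relative to $\eta_0$, and indeed relative to $\delta(\eta^\ast)$ in the scheme above. That requires restricting to radii for which $R^2e^{-\tau_0/2}$ is small, i.e.\ $\tau_0$ large. This does hold where the lemma is used ($\tau_0\ge T$ and radii $\lesssim\sqrt{\tau_0}$ in Proposition~\ref{prop 3.2}), but it does not follow from the choice of $r_0$ or from $R\le r_0e^{\tau_0/2}$ alone.
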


We will need the following non-concentration estimate. The basic argument can be found in \cite{lotay2022neck}, based on Ecker’s log-Sobolev inequality \cite{ecker2000logarithmic}; similar estimates also appear in \cite{li2024singularity,ghosh2025cylindrical,szekelyhidi2026mean}.

\begin{prop}\label{prop 3.2}
There exist constants $C,\epsilon_0,\theta, T > 0$ and $p>1$ with the following property. Set $\epsilon=D_{\mathscr T^{\alpha,\mathscr C_+}_\tau}(M_{\tau_0}).$ Assume that $|\alpha|,\epsilon,T^{-1}\leq\epsilon_0, \tau+1\leq T_\alpha$, and $\tau_0\geq T$.

If $\epsilon>e^{-\frac{\tau_0}{8}}$, then
$$\left(\int_{M_{\tau_0+1}\cap B_{(1+\theta)R_\epsilon}(0)} d_{\mathscr T^{\alpha,\mathscr C_+}_{\tau+1}}^{2p}
e^{-\frac{|x|^2}{4}}\,d\mathcal H^n \right)^{\frac{1}{2p}} \leq C\epsilon,$$
and $M_{\tau_0+1}$ is an $\frac{\eta}{4}$-graph over $\mathscr T^{\alpha,\mathscr C_+}_\tau$ on $B_{(1+\theta)R_\epsilon}(0)$.

If $\epsilon\leq e^{-\frac{\tau_0}{8}}$, then
$$\left(\int_{M_{\tau_0+1}\cap B_{(1+\theta) \sqrt{\tau_0}}(0)} d_{\mathscr T^{\alpha,\mathscr C_+}_{\tau+1}}^{2p}
e^{-\frac{|x|^2}{4}}\,d\mathcal H^n \right)^{\frac{1}{2p}}\leq Ce^{-\frac{\tau_0}{8}},$$
and $M_{\tau_0+1}$ is an $\frac{\eta}{4}$-graph over $\mathscr T^{\alpha,\mathscr C_+}_\tau$ on $B_{(1+\theta) \sqrt{\tau_0}}(0)$.
\end{prop}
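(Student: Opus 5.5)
We follow the non-concentration argument of Li--Sz\'ekelyhidi \cite[Lemma~34]{li2024singularity} and \cite{ghosh2025cylindrical}, checking that the free boundary and the Fermi metric only add controllable errors. Write $\mathscr T_s:=\mathscr T^{\alpha,\mathscr C_+}_{\tau+s}$, $s\in[0,1]$, and let $R$ be the graphical radius in $D_{\mathscr T_0}(M_{\tau_0})$. Then $e^{-R^2/(8p_0)}\le\epsilon$, so $R\ge R_\epsilon$. In the regime $\epsilon\le e^{-\tau_0/8}$ we use instead $R\ge\sqrt{\tau_0}$ up to constants; this is available because $r_0e^{e^{\tau_0}/2}$ is huge and $R_\alpha$ is large.

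\textbf{Graphicality.} First apply Lemma~\ref{lemma 3.1} with $\eta_0=\eta/4$, after shrinking $\eta$ so that $\eta\le\eta_1$. For $s\in[0,1]$, $M_{\tau_0+s}$ is then an $\frac{\eta}{4}$-graph over $\mathscr T_s$ on $B_{e^{s/2}(R-R_1)}$. At $s=1$ we have $e^{1/2}(R-R_1)\ge(1+\theta)R_\epsilon$ once $\theta<e^{1/2}-1$ and $R_\epsilon\gg R_1$, which holds for $\epsilon_0$ small. Since $\mathscr T_1$ and $\mathscr T_0$ differ by $O(|\alpha|)$ in $C^2$ on this ball, the graph statement transfers to $\mathscr T_0$ after adjusting constants. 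Let $u(\cdot,s)$ be the graph function over $\mathscr T_s$. By \S2.2 it satisfies
\[
\left|\partial_s u-\Delta u+\tfrac12\langle x,\nabla u\rangle\right|\le C\bigl(|u|+|\nabla u|+R^2e^{-\tau_0/2}\bigr).
\]
By \S2.3, $u$ satisfies the perturbed Neumann condition
\[
|\partial_\eta u|\le C\bigl(|u||\nabla u|+|\nabla u|^2+Re^{-\tau_0/2}\bigr).
\]
Here we use that $\mathscr T_s$ itself satisfies the exact Neumann condition.

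\textbf{Integral estimate.} This is the main step. Following \cite{lotay2022neck}, we prove a reverse H\"older inequality. We use a cutoff $\psi$ supported in $B_{e^{s/2}(R-R_1)}$ and evolve $\int\psi^2u^2 e^{-|x|^2/4}$ along weights $e^{-|x|^2/(4\lambda(s))}$, with $\lambda$ decreasing suitably. Together with Ecker's log-Sobolev inequality on the graphs, this yields hypercontractivity: the $L^2$ norm at time $0$ controls the $L^{2p}$ norm at time $1$, on a ball enlarged by the factor $e^{1/2}$ coming from the drift. The free boundary enters in two places.

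\begin{itemize}
\item The integration by parts produces boundary terms $\int_{\partial}\psi^2u\,\partial_\eta u\,e^{-|x|^2/4}$. The quadratic part $|u||\nabla u|+|\nabla u|^2$ is absorbed into the interior Dirichlet term using a trace inequality on the half-space region. The $Re^{-\tau_0/2}$ part is small.
\item Ecker's log-Sobolev inequality must be available on hypersurfaces with free boundary on a hyperplane. We obtain it by even reflection across $S$: the reflected graph is a hypersurface without boundary, and the Gaussian weight is even.
\end{itemize}

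The cutoff errors are bounded by $e^{-R^2/8}$ times the area bound $CR^n$. This is $\le C\epsilon^{p_0}\cdot$poly$(R_\epsilon)\le C\epsilon$ after choosing $p_0$ close to $1$ and $p\le p_0$.

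\textbf{Inhomogeneous metric error.} The forcing term $R^2e^{-\tau_0/2}$ contributes at most $CR^{n+2}e^{-\tau_0/2}$. In the first regime this is $\le C\epsilon$, since $\epsilon>e^{-\tau_0/8}$ and $R$ is at most polynomial in $\tau_0$ on the relevant ball. In the second regime we run the same argument on $B_{\sqrt{\tau_0}}$ and obtain the bound $Ce^{-\tau_0/8}$. Finally, the contribution of $\mathscr T_s$ changing in time is already built into the graph equation over a moving flow.

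I expect the main obstacle to be the boundary terms in the log-Sobolev/hypercontractivity step, namely making the reflection argument compatible with the non-exact Neumann condition. If the reflection does not work cleanly, I would instead subtract a small boundary corrector $w$ with $\partial_\eta w=\partial_\eta u$, so that $u-w$ satisfies the exact Neumann condition.
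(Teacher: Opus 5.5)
\textbf{The gap is in the graphicality step.} You apply Lemma~\ref{lemma 3.1} with $\eta_0=\eta/4$ and then shrink $\eta$ so that $\eta\le\eta_1(\eta/4)$. This cannot be arranged. At $s=0$, Lemma~\ref{lemma 3.1} says that an $\eta_1$-graph on $B_R(0)$ is an $\eta_0$-graph on $B_{R-R_1}(0)$, which forces $\eta_1(\eta_0)\le\eta_0$. Hence your requirement would give $\eta\le\eta_1(\eta/4)\le\eta/4$.

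The underlying reason is that pseudolocality alone never improves the graph constant. For instance, the constant mode (eigenvalue $1$ of $L$) grows by a factor $e$ in unit time. So an arbitrary $\eta$-graph at time $\tau_0$ need not be an $\frac{\eta}{4}$-graph at time $\tau_0+1$. The gain from $\eta$ to $\frac{\eta}{4}$ must come from the smallness of $\epsilon$. It can only hold on the smaller ball $B_{(1+\theta)R_\epsilon}(0)$, not on all of $B_{e^{1/2}(R-R_1)}(0)$ as you claim: near that outer edge the graph is governed by data that were only $\eta$-small.

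\textbf{How the paper gets the graphical conclusion.} Lemma~\ref{lemma 3.1} is applied with a fixed small $\eta_0=\eta'$ and $\eta:=\eta_1(\eta')$. This only yields an $\eta'$-graph on $B_{e^{s/2}(R_\epsilon-R_1)}(0)$, which is enough to write down the graph equation of \S\ref{section 2.2}. The $\frac{\eta}{4}$ bound is then read off from the $L^{2p}$ estimate via local parabolic estimates. Roughly, $|v|+|\nabla v|+|\nabla^2v|\lesssim\epsilon\,e^{(|x|+1)^2/(8p)}$. On $B_{(1+\theta)R_\epsilon}(0)$ this is of order $\epsilon^{1-(1+\theta)^2p_0/p}$, up to $\epsilon^{-o(1)}$ factors. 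That is small exactly when $(1+\theta)^2p_0<p$, which is why the paper takes $1+\theta<\min\{e^{1/2},\sqrt p\}$ and $p_0$ close to $1$.

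You impose only $\theta<e^{1/2}-1$, and you additionally require $p\le p_0$. That requirement makes $(1+\theta)^2p_0<p$ impossible, so in your setup the integral bound cannot supply the graphical conclusion either. The restriction is also unnecessary for the cutoff errors. At time $s$ your transported cutoff sits near $|x|\approx e^{s/2}R$, where the weight is roughly $e^{-e^{s}R^2/4}\le\epsilon^{2e^{s}p_0}$. This is harmless in every $L^{q(s)}$ norm that arises, with no constraint linking $p$ and $p_0$.

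\textbf{The integral estimate itself.} Your route is a workable alternative: a localized energy with a cutoff carried along by the drift, plus Ecker's log-Sobolev inequality. The paper instead patches $e^{-Cs}|v|^{3/2}-Cse^{-\tau_0/4}$ with the barrier $e^{-s}|x|^2-(R_\epsilon-R_1-1)^2$ into a global subsolution on $\mathscr T_{\tau+s}$, which avoids cutoff errors entirely. Your explicit handling of the conormal boundary term, via the perturbed Neumann condition of \S\ref{section 2.3} and reflection, addresses something the paper does not spell out. Note that the barrier satisfies $\partial_\eta|x|^2=0$ exactly on the boundary.

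\textbf{Two fixable imprecisions.}
First, the metric forcing term must be estimated on $B_{R_\epsilon}(0)$ (respectively $B_{\sqrt{p_0\tau_0}}(0)$), not on the full graphical radius $R$. That radius can be far larger than any power of $\tau_0$.
Second, in the second regime $R\ge\sqrt{p_0\tau_0}$ follows directly from $e^{-R^2/(8p_0)}\le\epsilon\le e^{-\tau_0/8}$; no appeal to $R_\alpha$ being large is needed.
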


\begin{proof}
Since $D_{\mathscr {T}^{\alpha, \mathscr C_+}_\tau}(M_{\tau_0}) = \epsilon,$ it follows directly from the definition of $D_{\mathscr {T}^{\alpha, \mathscr C_+}_\tau}(M_{\tau_0})$ that $M_{\tau_0}$ is an $\eta$-graph over $\mathscr {T}^{\alpha, \mathscr C_+}_{\tau}$ on $B_{R_\epsilon}(0)$.
    
Fix a sufficiently small constant $\eta' >0$, and let $\eta = \eta_1(\eta')$ to be the constant given by Lemma ~\ref{lemma 3.1}. Since $M_{\tau_0}$ is an $\eta$-graph over $\mathscr {T}^{\alpha, \mathscr C_+}_\tau$ on $B_{R_\epsilon}(0)$, Lemma ~\ref{lemma 3.1} implies that for every $s \in [0,1]$ $M_{\tau_0 + s}$ can be represented as an $\eta'$ graph of a function $v(x,s)$ over $\mathscr{T}^{\alpha, \mathscr C_+}_{\tau +s}$ on $B_{e^{\frac{s}{2}} (R_\epsilon - R_1)}(0)$. Moreover, $|v(\cdot,s)|, |\nabla v(\cdot,s)|, |\nabla^2 v(\cdot,s)| \leq \eta'.$ Choosing $\eta'$ sufficiently small, it follows from Section \ref{section 2.2} that
$$|\partial_s v - \Delta v + \frac{1}{2} x. \nabla v| \leq C \big(|v| + |\nabla v| + R_\epsilon^2 e^{-\frac{\tau_0}{2}}\big).$$
where $C$ is independent of $s$. 

First assume $\epsilon > e^{-\frac{\tau_0}{8}}.$ Then we have $R_\epsilon < \sqrt{p_0\tau_0}.$ Since $\tau_0 \ge T \ge \epsilon_0^{-1}$, by choosing $\epsilon_0$ sufficiently small, we may assume that $\tau_0$ is sufficiently large. Hence
\[
R_\epsilon^2 e^{-\frac{\tau_0}{2}} < p_0\tau_0 e^{-\frac{\tau_0}{2}} \leq e^{-\frac{\tau_0}{4}}.
\]
Arguing as in the proof of [\citealp{li2024singularity}, Lemma $34$], and using Young's inequality, we obtain
$$\partial_s |v|^{\frac{3}{2}} - \Delta |v|^{\frac{3}{2}} + \frac{1}{2} x \cdot \nabla |v|^{\frac{3}{2}} \leq C \Big(|v|^{\frac{3}{2}} + e^{-\frac{\tau_0}{4}} \Big),$$
after possibly increasing $C$. 
        
As in \cite[Proposition~5.2]{ghosh2025cylindrical}, the weighted $L^p$-norms of $d^2_{\mathscr {T}^{\alpha, \mathscr C_+}_{\tau+s}}$ on $M_{\tau_0+s}$ and $|v|^2$ on $\mathscr {T}^{\alpha, \mathscr C_+}_{\tau+s}$ are uniformly equivalent. Define  
$$f(x,s) = e^{-Cs} |v|^{\frac{3}{2}} - Cse^{-\frac{\tau_0}{4}}.$$
Then $f$ is a subsolution of the drift heat equation. Also, $e^{-s} |x|^2 - (R_\epsilon - R_1 - 1)^2$ is a subsolution of the drift heat equation. We therefore define
$$\widehat{f} =
\begin{cases}
    \max \big\{f, e^{-s} |x|^2 - (R_\epsilon - R_1 - 1)^2\big\}, & \text{for}\; |x| \leq  e^{\frac{s}{2}} (R_\epsilon - R_1), \\
    e^{-s}|x|^2 - (R_\epsilon - R_1 - 1)^2, & \text{for}\; |x| > e^{\frac{s}{2}} (R_\epsilon - R_1).
\end{cases}$$
Note that, for $|x| \leq e^{\frac{s}{2}} (R_\epsilon - R_1 - 1)$, we have $\widehat f = f,$ while for $|x|$ close to $e^{\frac{s}{2}} (R_\epsilon - R_1),$ we have $\widehat f = e^{-s} |x|^2 - (R_\epsilon - R_1 - 1)^2$ using $|v| \leq \eta.$ It follows that $\widehat{f}$ is also a subsolution of the drift heat equation along $\mathscr {T}^{\alpha, \mathscr C_+}_{\tau+s}$ on the time interval $s \in [0, 1].$ Now,
$$\widehat{f}^\frac{4}{3} \leq 
\begin{cases}
    |v|^2, & \text{for}\; |x| \leq e^{\frac{s}{2}} (R_\epsilon - R_1 - 1), \\
    (|v|^2 + |x|^{\frac{8}{3}} ), & \text{for}\; e^{\frac{s}{2}} (R_\epsilon - R_1 - 1) < |x| \leq e^{\frac{s}{2}} (R_\epsilon - R_1),\\
    |x|^{\frac{8}{3}}, & \text{for}\; |x| > e^{\frac{s}{2}} (R_\epsilon - R_1).
\end{cases}$$
Choose $q > 1$ such that $1< q < p_0.$ Then for sufficiently small $\epsilon > 0$, 
$$\int_{M_{\tau_0} \setminus B_{R_\epsilon - R_1 - 1}(0)} |x|^{\frac{8}{3}} e^{-\frac{|x|^2}{4}} d\mathcal{H}^n \leq e^{-\frac{(R_\epsilon - R_1 - 1)^2}{4q}} \leq e^{-\frac{R_\epsilon^2}{4p_0}} \leq C\epsilon^2,$$
Consequently,
$$\int_{\mathscr {T}^{\alpha, \mathscr C_+}_{\tau}} \widehat{f}^{\frac{4}{3}} e^{-\frac{|x|^2}{4}} d\mathcal{H}^n \leq C\epsilon^2.$$
Applying Ecker’s log-Sobolev inequality \cite{ecker2000logarithmic}, we obtain a $p > 1$ and a $C >0$, such that
$$\int_{\mathscr {T}^{\alpha, \mathscr C_+}_{\tau+1}} \widehat f^{\frac{4p}{3}} \;e^{-\frac{|x|^2}{4}} d \mathcal{H}^n \leq C \epsilon^{2p}.$$
Choose $\theta > 0$  sufficiently small so that $1+ \theta < \min \{e^{\frac{1}{2}} , \sqrt p \}.$ Then for all sufficiently small $\epsilon > 0$,  
$$e^{\frac{1}{2}}(R_\epsilon - R_1) \geq (1+ \theta) R_\epsilon.$$
Moreover, on $B_{e^{\frac{1}{2}} (R_\epsilon - R_1)}(0)$, we have
$$f(.,1) = e^{-C} |v(x,1)|^{\frac{3}{2}} - Ce^{-\frac{\tau_0}{4}} \leq \widehat f(.,1).$$
Since $R_\epsilon \leq \sqrt{p_0\tau_0},$ we have $(1+\theta)R_\epsilon \leq C\sqrt{\tau_0}.$ Therefore, by the area ratio bound, for $\tau_0$ sufficiently large,
$$\int_{M_{\tau_0 + 1} \cap B_{(1+ \theta) R_\epsilon}(0)} e^{-\frac{p\tau_0}{3}} e^{-\frac{|x|^2}{4}} d \mathcal{H}^n \leq C e^{-\frac{p\tau_0}{3}} \leq \epsilon^{2p}.$$
It follows that
$$\int_{M_{\tau_0 + 1} \cap B_{(1+ \theta) R_\epsilon}(0)} d_{\mathscr {T}^{\alpha, \mathscr C_+}_{\tau +1}}^{2p} \;e^{-\frac{|x|^2}{4}} d \mathcal{H}^n \leq C \epsilon^{2p}.$$
The graphical conclusion follows from the same argument as in \cite[Proposition~5.2]{ghosh2025cylindrical}.

If $\epsilon\leq e^{-\frac{\tau_0}{8}}$, then $R_\epsilon \geq \sqrt{p_0\tau_0}$, and hence, for $\tau_0$ sufficiently large, the above argument can be carried out with $\sqrt{p_0\tau_0}$ in place of $R_\epsilon$, yielding the desired conclusion.
\end{proof}

The non-concentration estimate will be used in the following form. Its proof is identical to that of Proposition~5.3 in \cite{ghosh2025cylindrical}, and we therefore omit it.

\begin{cor} \label{cor 3.3}
There exist $\epsilon_0, C_0 > 0$ with the following property. For every 
$\delta>0$, there exists $R_0 = R_0(\delta) >0$ such that, if $\tau + 1 \leq T_\alpha$, $|\alpha| \leq \epsilon_0, D_{\mathscr {T}^{\alpha, \mathscr C_+}_\tau}(M_{\tau_0}) \leq \epsilon_0$ and $D_{\mathscr {T}^{\alpha, \mathscr C_+}_\tau}(M_{\tau_0}) > e^{-\frac{\tau_0}{8}}$ then the following improved estimate holds:
\[
D_{\mathscr {T}^{\alpha, \mathscr C_+}_{\tau+1}}(M_{\tau_0+1}) \leq \max \bigg\{\delta D_{\mathscr {T}^{\alpha, \mathscr C_+}_\tau}(M_{\tau_0}) + C_0 \bigg(\int_{M_{\tau_0+1}\cap B_{R_0}(0)} d_{\mathscr {T}^{\alpha, \mathscr C_+}_{\tau+1}}^2 e^{-\frac{|x|^2}{4}} d\mathcal{H}^n \bigg)^{\frac12}, C_0e^{-\frac{(R_\alpha-1)^2}{8p_0}} \bigg\}.
\]
\end{cor}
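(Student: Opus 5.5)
The plan is to deduce Corollary~\ref{cor 3.3} from Proposition~\ref{prop 3.2} by splitting the Gaussian distance at time $\tau_0+1$ into a compact part and a tail, and then controlling the tail by Hölder's inequality with the improved $L^{2p}$ bound. Set $\epsilon=D_{\mathscr T^{\alpha,\mathscr C_+}_\tau}(M_{\tau_0})$. Since $\epsilon>e^{-\frac{\tau_0}{8}}$, the first alternative of Proposition~\ref{prop 3.2} applies. It shows that $M_{\tau_0+1}$ is an $\frac{\eta}{4}$-graph, in particular an $\eta$-graph, over $\mathscr T^{\alpha,\mathscr C_+}_{\tau+1}$ on $B_{(1+\theta)R_\epsilon}(0)$, provided $(1+\theta)R_\epsilon\le R_\alpha-1$. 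Otherwise the graphical radius is capped at $R_\alpha-1$, and the boundary term in the definition is at most $e^{-\frac{(R_\alpha-1)^2}{8p_0}}$; this produces the third entry of the maximum. In the main case, the radius $R'$ entering $D_{\mathscr T^{\alpha,\mathscr C_+}_{\tau+1}}(M_{\tau_0+1})$ satisfies $R'\ge(1+\theta)R_\epsilon$. Therefore
\[
e^{-\frac{R'^2}{8p_0}}\le e^{-\frac{(1+\theta)^2R_\epsilon^2}{8p_0}}=\epsilon^{(1+\theta)^2},
\]
and this is at most $\frac{\delta}{3}\epsilon$ once $\epsilon_0$ is small depending on $\delta$.

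For the integral term I would argue as follows. A priori it runs over $B_{R'}(0)$, but we may restrict to $B_{(1+\theta)R_\epsilon}(0)$. Beyond that radius, if $R'$ is larger, the integrand is bounded by $\eta^2$ and the Gaussian tail together with the area ratio bound gives at most $C\epsilon^{(1+\theta)^2/p_0}$. Choosing $p_0$ close to $1$ relative to $\theta$ makes this $o(\epsilon)$. Now split $B_{(1+\theta)R_\epsilon}(0)=B_{R_0}(0)\cup\big(B_{(1+\theta)R_\epsilon}\setminus B_{R_0}\big)$. The piece over $B_{R_0}(0)$ is exactly the term multiplied by $C_0$. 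On the annulus, Hölder's inequality with exponents $p$ and $\frac{p}{p-1}$ gives
\[
\int_{B_{(1+\theta)R_\epsilon}\setminus B_{R_0}} d^2 e^{-\frac{|x|^2}{4}}\le\Big(\int d^{2p}e^{-\frac{|x|^2}{4}}\Big)^{\frac1p}\Big(\int_{M_{\tau_0+1}\setminus B_{R_0}}e^{-\frac{|x|^2}{4}}\Big)^{\frac{p-1}{p}}\le C\epsilon^2\, e^{-c R_0^2},
\]
where the $L^{2p}$ bound comes from Proposition~\ref{prop 3.2} and the second factor from the area ratio bound $\mathcal H^n(M_\tau\cap B_R)\le CR^n$. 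Choosing $R_0=R_0(\delta)$ large makes this at most $(\frac{\delta}{3}\epsilon)^2$.

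Adding the three contributions gives
\[
D_{\mathscr T^{\alpha,\mathscr C_+}_{\tau+1}}(M_{\tau_0+1})\le \delta\epsilon+C_0\Big(\int_{M_{\tau_0+1}\cap B_{R_0}}d^2e^{-\frac{|x|^2}{4}}\Big)^{\frac12},
\]
and in the capped case the bound is $C_0e^{-\frac{(R_\alpha-1)^2}{8p_0}}$. Together these give the stated maximum. The main obstacle is the bookkeeping of the graphical radius, and in particular making sure that $D$ is computed with the largest admissible $R'$. The radius must also stay below $r_0e^{\frac{e^{\tau_0}}{2}}$, which is harmless since $R_\epsilon\le\sqrt{p_0\tau_0}$. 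Finally, one needs the relation between the exponent $\frac{(1+\theta)^2}{p_0}$ and $1$, which forces $p_0$ to be chosen close to $1$ only after $\theta$ is fixed.
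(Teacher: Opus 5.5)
Your argument is the standard one, and it is what the paper has in mind: the paper omits the proof and defers to Proposition~5.3 of \cite{ghosh2025cylindrical}. The graphical radius $(1+\theta)R_\epsilon$ from Proposition~\ref{prop 3.2} controls the term $e^{-R'^2/8p_0}$. H\"older's inequality against the $L^{2p}$ bound controls the integral outside $B_{R_0}(0)$.

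\textbf{Quantifier gap.} To absorb $e^{-R'^2/8p_0}\le\epsilon^{(1+\theta)^2}$ into $\frac{\delta}{3}\epsilon$, you need $\epsilon^{2\theta+\theta^2}\le\frac{\delta}{3}$. So your $\epsilon_0$ depends on $\delta$, whereas the corollary as stated fixes $\epsilon_0$ and $C_0$ before $\delta$.

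Enlarging $R_0$ does not fix this. The integral pieces could be handled that way: when $\epsilon$ is bounded below, everything outside $B_{R_0}(0)$ is absolutely small once $R_0$ is large. But the boundary term is not an integral over any region you can move into $B_{R_0}(0)$. For $\epsilon$ comparable to $\epsilon_0$, your estimate only gives the coefficient $\delta+\epsilon_0^{2\theta+\theta^2}$ in front of $\epsilon$.

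What you have actually proved is the version ``for every $\delta>0$ there exist $\epsilon_0(\delta)$ and $R_0(\delta)$'', with $C_0$ universal. You should state it in that form. This is what this line of argument yields, and it is all that is needed downstream. In the contradiction argument behind Lemma~\ref{lemma 5.6}, $\delta$ is fixed in terms of $L$, $\tilde\delta_1$, $\tilde\delta_2$ before the smallness parameters tend to zero.

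\textbf{Two minor points.}

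First, in the capped case $(1+\theta)R_\epsilon>R_\alpha-1$, the integral part of $D_{\mathscr T^{\alpha,\mathscr C_+}_{\tau+1}}(M_{\tau_0+1})$ is not bounded by $e^{-(R_\alpha-1)^2/8p_0}$. You still need your splitting there. Then $a+b\le 2\max\{a,b\}$ gives the stated maximum after halving $\delta$ and doubling $C_0$.

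Second, the tail over $B_{R'}\setminus B_{(1+\theta)R_\epsilon}$ is weighted by $e^{-|x|^2/4}$, not $e^{-|x|^2/8p_0}$. Its contribution to $D$ is therefore at most $C\eta\,\epsilon^{p_0(1+\theta)^2/q}$ for any $q>1$. This is automatically $o(\epsilon)$, so no constraint tying $p_0$ to $\theta$ is needed.
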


Let $u$ be a solution to the linearized equation $(\partial_s - L)u = 0$ on $\mathscr{C}_+$ subject to the Neumann boundary condition $\partial_\eta u=0$ on $\partial\mathscr C_+.$ Define
$$I^2(u,s) = \int_\mathscr{C_+} u^2 e^{-\frac{|x|^2}{4}} d\mathcal{H}^n.$$
The following lemma follows by the same argument as in
\cite{ghosh2025cylindrical}.

\begin{lemma}\label{lemma 3.4} 
Let $u$ be a solution to the linearized equation. There exist constants $0<\delta_1<\delta_2<1$ such that the following statements hold. \begin{enumerate} 
\item If $I(u,s+1)\geq e^{\delta_1}I(u,s),$ then $I(u,s+2)\geq e^{\delta_2}I(u,s+1).$ 
\item If, at some time $s$, the decomposition of $u$ into eigenfunctions of $L$ has no component in the zero eigenspace, then either $I(u,s+1)\geq e^{\delta_2}I(u,s)$ or $I(u,s-1)\geq e^{\delta_2}I(u,s).$ \end{enumerate} 
\end{lemma}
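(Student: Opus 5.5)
The plan is to expand $u$ in the Neumann eigenbasis of $L_N$ on $\mathscr C_+$. Self-adjointness of $L_N$ in the weighted space $L^2(e^{-|x|^2/4})$ uses only the Neumann condition: the boundary term $\int_{\partial\mathscr C_+}(v\,\partial_\eta u - u\,\partial_\eta v)e^{-|x|^2/4}$ vanishes. Discreteness of the spectrum comes from the even-reflection identification $K_+\cong\{v\in K: v\circ\mathcal R=v\}$ used in Section~\ref{section 2.5}. The Neumann eigenfunctions on $\mathscr C_+$ are exactly the restrictions of the $\mathcal R$-even eigenfunctions of $L$ on the full cylinder $\mathscr C$. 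So the Neumann spectrum is a subset of the cylindrical spectrum, and it inherits a uniform gap around $0$: there is $\mu_0>0$ such that no eigenvalue lies in $(-\mu_0,0)\cup(0,\mu_0)$. Writing $-\lambda_j$ for the eigenvalues of $L_N$ and $u(s)=\sum_j a_j e^{-\lambda_j s}\varphi_j$, we get
\[
I^2(u,s)=\sum_j a_j^2e^{-2\lambda_j s}.
\]

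For (1), set $\mu_j=e^{-2\lambda_j}$ and regard $I^2(u,s+t)$ as a sum of exponentials in $t$. Then $\log I^2(u,s+t)$ is convex in $t$; this is the standard log-convexity, or Cauchy--Schwarz: $I^2(s+1)^2\le I^2(s)\,I^2(s+2)$. Convexity alone gives $I(u,s+2)/I(u,s+1)\ge I(u,s+1)/I(u,s)\ge e^{\delta_1}$, which does not yet improve $\delta_1$ to $\delta_2$. To get the strict improvement we use the spectral gap, following \cite{ghosh2025cylindrical}. Split $u$ into the part with $\lambda_j\ge\mu_0$ (which decays), the kernel part with $\lambda_j=0$, and the part with $\lambda_j\le-\mu_0$ (which grows). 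Choose $\delta_1$ much smaller than $\mu_0$, and $\delta_2\in(\delta_1,\mu_0)$.

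Growth of $I$ by $e^{\delta_1}$ over $[s,s+1]$ forces the growing part to carry a definite fraction of the mass at time $s+1$. Otherwise the kernel part (constant in time) and the decaying part together could not produce a ratio of at least $e^{\delta_1}$. Quantitatively, if the growing part has mass fraction $\theta$ at time $s+1$, the ratio $I^2(s+1)/I^2(s)$ is at most about $1+C\theta$. Hence $\theta\ge c\,\delta_1$.

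Over the next unit of time the growing part is multiplied by at least $e^{\mu_0}$, and the rest is multiplied by at most $1$. This gives
\[
\frac{I^2(s+2)}{I^2(s+1)}\ge 1+\theta\,(e^{2\mu_0}-1).
\]
We then need to choose the constants carefully so that this lower bound beats $e^{2\delta_2}$. This bookkeeping is the main obstacle in (1). If $\delta_1$ is too small relative to $\theta$, the gain is not enough. The first thing I would try is to fix $\delta_2$ first and then choose $\delta_1$ suitably close to $\delta_2$ (it is only required that $\delta_1<\delta_2$). The comparison can also be made sharper using log-convexity together with the fact that the rate of change of $\log I^2$ is a weighted average of the $-2\lambda_j$, and this average is pushed away from the interval $(-2\mu_0,2\mu_0)$.

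For (2), assume there is no kernel component. Then every eigenvalue satisfies $|\lambda_j|\ge\mu_0$. Let $P$ be the mass of the growing modes at time $s$ and $N$ the mass of the decaying modes, so $P+N=I^2(u,s)$. One of them is at least $\tfrac12 I^2$. If $P\ge\tfrac12 I^2$, then
\[
I^2(s+1)\ge e^{2\mu_0}P\ge \tfrac12 e^{2\mu_0}I^2(s).
\]
If instead $N\ge\tfrac12 I^2$, the same estimate holds backward in time: $I^2(s-1)\ge\tfrac12 e^{2\mu_0}I^2(s)$. The factor $\tfrac12$ is removed, by passing to a time step long enough relative to $\mu_0$, or equivalently by using log-convexity. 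Log-convexity together with the absence of a zero mode forces $\log I^2$ to have slope at least $2\mu_0$ in magnitude on one side over a unit interval, once the constants are adjusted. This gives the conclusion with $\delta_2<\mu_0$.

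The only genuinely free-boundary input throughout is the reflection identification of the Neumann spectrum with the even part of the cylinder spectrum. With that in hand, the rest is identical to the cylindrical argument.
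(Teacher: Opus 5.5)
Your overall route is the right one and is what the paper defers to: expand in the Neumann eigenbasis, identified via $\mathcal R$-reflection with the even part of the cylindrical spectrum. This gives $I^2(u,s+t)=\sum_j b_je^{2\mu_jt}$ with $b_j\ge0$ and rates $\mu_j\in\{1,\tfrac12\}\cup\{0\}\cup(-\infty,-\mu_-]$, where $\mu_-=\min\{\tfrac12,\tfrac1{n-k}\}$. The quantitative steps, however, do not work as written.

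\textbf{Part (1).} The bound $I^2(s+2)/I^2(s+1)\ge1+\theta(e^{2\mu_0}-1)$ cannot come from ``the rest is multiplied by at most $1$.'' An upper bound on the non-growing multipliers gives no lower bound, and decaying modes actually shrink. The bound is in fact false under the hypothesis. Put weights $0.75$ on rate $\tfrac12$ and $0.25$ on rate $-\tfrac12$ at time $s+1$. Then $I(s+1)\ge e^{\delta_1}I(s)$ holds for $\delta_1\le0.02$, yet $I^2(s+2)/I^2(s+1)=0.75e+0.25e^{-1}\approx2.13<1+0.75(e-1)\approx2.29$. Your bound $\theta\ge c\delta_1$ cannot repair this, because it discards the heavy penalty the hypothesis puts on decaying mass. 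The remark that the mean rate is ``pushed away'' from the gap is also false, since mixtures can have any mean.

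What is missing is the second-derivative mechanism. For $f(t)=\log I^2(u,s+t)$ one has $f'=2\bar\mu_t$ and $f''=4\operatorname{Var}_t(\mu)$, the mean and variance of the rates with weights proportional to $b_je^{2\mu_jt}$. Only the positive gap is needed: if $\bar\mu_t\in[\delta_1,\tfrac14]$, every rate lies at distance at least $\delta_1$ from $\bar\mu_t$, so $f''\ge4\delta_1^2$. Convexity gives $f'(1)\ge f(1)-f(0)\ge2\delta_1$. Hence $f'(t)\ge\min\{\tfrac12,\,2\delta_1+4\delta_1^2(t-1)\}$ on $[1,2]$, so $f(2)-f(1)\ge2(\delta_1+\delta_1^2)$ for $\delta_1$ small.

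\textbf{Part (2).} The estimate $\tfrac12e^{2\mu_0}>1$ requires $\mu_0>\tfrac12\log2$. This fails on the decaying side as soon as $n-k\ge3$, because the rate $-\tfrac1{n-k}$ (degree-two spherical harmonics) is $\mathcal R$-even in both configurations. The statement fixes the time step at $1$, so you cannot lengthen it. The claim that one secant slope of $\log I^2$ is at least $2\mu_0$ is also false: equal masses on $\pm\mu_0$ give slope $\log\cosh(2\mu_0)<2\mu_0$.

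The correct estimate uses the even part directly. With no kernel component, $|\mu_j|\ge\mu_-$ for all $j$, so
\[
I^2(u,s+1)+I^2(u,s-1)=\sum_j 2b_j\cosh(2\mu_j)\ge 2\cosh(2\mu_-)\,I^2(u,s).
\]
Hence one of the two alternatives holds for any $\delta_2\le\tfrac12\log\cosh(2\mu_-)$.

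Then fix such a $\delta_2$ small and choose $\delta_1<\delta_2$ with $\delta_1+\delta_1^2\ge\delta_2$. The order you guessed is the right one, but it only works once the improvement $\delta_1\mapsto\delta_1+\delta_1^2$ in (1) has actually been proved.
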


We introduce the following modified version of the distance function:
$$\mathcal {D}_{\mathscr {T}^{\alpha, \mathscr C_+}_\tau}(M_{\tau_0}) := \sup_{s \in [0,2]} D_{\mathscr {T}^{\alpha, \mathscr C_+}_{\tau-s}}(M_{\tau_0-s}).$$
Note that $\mathcal{D}_{\mathscr {T}^{\alpha, \mathscr C_+}_\tau}(M_{\tau_0})$ provides a uniform bound for $M_{\tau_0-s}$ for $s \in [0,2].$

Combining Proposition~\ref{prop 3.2} and Lemma~\ref{lemma 3.4} with the non-concentration estimate in Corollary~\ref{cor 3.3}, we obtain the following three-annulus lemma for the distance function. Its proof follows by adapting the argument of \cite[Lemma~5.6]{ghosh2025cylindrical} (see also \cite[Lemma~36]{li2024singularity}). The only additional point in the free-boundary setting is to verify the Neumann boundary condition for the limiting function. The boundary error estimate derived in Section~\ref{section 2.3}, together with the pointwise estimate in Proposition~\ref{prop 3.2}, implies that, under the normalization used in the contradiction argument, the boundary error is of higher order and converges to zero. Hence the limiting function satisfies the Neumann boundary condition on $\partial\mathscr C_+.$

\begin{lemma} \label{lemma 5.6}
Let $\delta_1, \delta_2$ be the constants from Lemma ~\ref{lemma 3.4} and $C_0$ be the constant from Corollary \ref{cor 3.3}. Then there exist $\epsilon_0 > 0$ and a large $L >0$ such that the following holds. Suppose that 
\begin{enumerate}
    \item $\tau + 2L \le T_\alpha,$
    \item $\tilde{\delta_1},\tilde{\delta_2} \in (\delta_1, \delta_2)$ with $\tilde{\delta_1} < \tilde{\delta_2},$
    \item $|\alpha| \leq \epsilon_0,$
    \item $\mathcal{D}_{\mathscr {T}^{\alpha, \mathscr C_+}_\tau}(M_{\tau_0}) \leq \epsilon_0.$
    \item $\mathcal{D}_{\mathscr {T}^{\alpha, \mathscr C_+}_{\tau+L}}(M_{\tau_0+L}) > \max\Big\{e^{-\frac {\tau_0} {8}}, C_0  e^{- \frac{(R_\alpha-1)^2}{8p_0}}\Big\}.$
\end{enumerate}
If 
$$\mathcal{D}_{\mathscr {T}^{\alpha, \mathscr C_+}_{\tau+L}}(M_{\tau_0 + L}) \geq e^{\tilde{\delta_1}L} \mathcal{D}_{\mathscr {T}^{\alpha, \mathscr C_+}_\tau} (M_{\tau_0}),$$
then
$$\mathcal{D}_{\mathscr {T}^{\alpha, \mathscr C_+}_{\tau + 2L}}(M_{\tau_0 + 2L}) \geq e^{\tilde{\delta_2}L} \mathcal{D}_{\mathscr {T}^{\alpha, \mathscr C_+}_{\tau+L}}(M_{\tau_0 + L}).$$
\end{lemma}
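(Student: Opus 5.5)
We argue by contradiction and compactness, in the style of the three-annulus arguments of \cite[Lemma~5.6]{ghosh2025cylindrical} and \cite[Lemma~36]{li2024singularity}. Suppose the statement fails for every $\epsilon_0$ and $L$. Then for a fixed large $L$ (chosen below) there are sequences $\alpha_i\to 0$, $\tau^i$, $\tau_0^i$, and flows $M^i$ such that $\epsilon_i:=\mathcal D_{\mathscr T^{\alpha_i,\mathscr C_+}_{\tau^i}}(M^i_{\tau_0^i})\to0$. Hypotheses (1)--(5) hold, the growth $\mathcal D(\tau+L)\ge e^{\tilde\delta_1L}\mathcal D(\tau)$ holds, but $\mathcal D(\tau+2L)< e^{\tilde\delta_2L}\mathcal D(\tau+L)$.

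Since $\mathcal D(\tau+L)$ exceeds $e^{-\tau_0/8}$ and $C_0e^{-(R_\alpha-1)^2/(8p_0)}$, we may apply Proposition~\ref{prop 3.2} and Corollary~\ref{cor 3.3} iteratively on $[\tau_0,\tau_0+2L]$ (a bounded number of unit steps). This shows three things:
\begin{itemize}
\item $M^i_{\tau_0^i+s}$ is an $\eta/4$-graph of a function $v_i(\cdot,s)$ over $\mathscr T^{\alpha_i,\mathscr C_+}_{\tau^i+s}$ on balls whose radii tend to infinity.
\item The weighted $L^{2p}$ norms of $v_i$ are bounded by $C\mathcal D(\tau+L)$.
\item The tails outside a fixed ball $B_{R_0(\delta)}$ contribute at most $\delta\mathcal D$, since the $\delta$-term in Corollary~\ref{cor 3.3} absorbs them.
\end{itemize}
We normalize $w_i:=v_i/\mathcal D_{\mathscr T^{\alpha_i,\mathscr C_+}_{\tau^i+L}}(M^i_{\tau_0^i+L})$. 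By the growth hypothesis, $w_i$ is uniformly bounded in weighted $L^2$ on the whole time interval $[-2,2L]$ (up to factors $e^{\tilde\delta_2 L}$).

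Next we pass to the limit. By the graph equation of Section~\ref{section 2.2} with reference flow $\mathscr T^{\alpha_i,\mathscr C_+}$, the functions $w_i$ satisfy a linear parabolic equation with coefficients converging to those of $L$ on $\mathscr C_+$, since $\alpha_i\to0$. The quadratic terms carry an extra factor $\epsilon_i\to0$. The metric error term is of size $R^2e^{-\tau_0/2}/\mathcal D\le R^2e^{-3\tau_0/8}\to0$, using $\mathcal D>e^{-\tau_0/8}$.

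Interior and boundary parabolic Schauder estimates then give local smooth subconvergence of $w_i$ to a solution $w$ of $(\partial_s-L)w=0$ on $\mathscr C_+\times[-2,2L]$. For the boundary condition we use Section~\ref{section 2.3}. There $\partial_\eta v_i$ is bounded by $C(|v_i||\nabla v_i|+|\nabla v_i|^2+Re^{-\tau_0/2})$. After dividing by $\mathcal D$, the quadratic terms are $O(\eta\,|\nabla w_i|)$ with the smallness improving as $\epsilon_i\to0$, by the pointwise smallness from Proposition~\ref{prop 3.2}. The last term is $O(Re^{-3\tau_0/8})$. Hence $\partial_\eta w=0$ on $\partial\mathscr C_+$, which makes Lemma~\ref{lemma 3.4} applicable to $w$. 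The uniform $L^{2p}$ bounds and the tail control from Corollary~\ref{cor 3.3} prevent loss of mass at infinity. Consequently $D$ converges to $I(w,\cdot)$ up to an error $\delta$ that can be made arbitrarily small, and the sup over $[0,2]$ in $\mathcal D$ passes to the corresponding sup of $I(w,\cdot)$.

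It remains to use the linear dichotomy. In the limit, $\sup_{[L-2,L]}I(w)\ge e^{\tilde\delta_1L}\sup_{[-2,0]}I(w)$ and $\sup_{[2L-2,2L]}I(w)\le e^{\tilde\delta_2L}\sup_{[L-2,L]}I(w)$. We expand $w$ in Neumann eigenfunctions of $L_N$. The growth over $[0,L]$ at rate exceeding $\delta_1$ forces the modes with eigenvalue below $-\delta_1$ (growth, in the sign convention of Lemma~\ref{lemma 3.4}) to dominate once $L$ is large. Iterating part (1) of Lemma~\ref{lemma 3.4} over the $L$ unit steps in $[L,2L]$ then gives growth at rate at least $\delta_2>\tilde\delta_2$ on $[L,2L]$, up to bounded sup-versus-pointwise factors absorbed by taking $L$ large. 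This contradicts the limit inequality.

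The zero eigenspace is the delicate point. These modes grow at rate $0<\delta_1$, so the hypothesis $\tilde\delta_1>\delta_1$ excludes them from dominating. It is also exactly why the comparison flows $\mathscr T^{\alpha,\mathscr C_+}$ are used rather than the static cylinder.

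I expect the main obstacle to be the passage to the limit near $\partial\mathscr C_+$ uniformly in $i$. This requires two things. First, boundary Schauder estimates for the perturbed Neumann problem, where the inhomogeneous boundary term must be shown to be small in $C^{\alpha}$ and not merely pointwise. Second, a check that the tail control of Corollary~\ref{cor 3.3} is compatible with the Gaussian weight restricted to the half-cylinder. I would handle the first by reflecting evenly across $\partial\mathscr C_+$, using Section~\ref{section 2.5} and the block form of the Fermi metric. This turns the problem into an interior estimate with a small odd error, to which the cylindrical argument of \cite{ghosh2025cylindrical} applies verbatim.
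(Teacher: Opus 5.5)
Your proposal follows the same route as the paper: the contradiction--compactness argument of the cylindrical three-annulus lemma, built from Proposition~\ref{prop 3.2}, Corollary~\ref{cor 3.3} and Lemma~\ref{lemma 3.4}, plus one new ingredient. That ingredient is that the boundary estimate of Section~\ref{section 2.3}, together with the pointwise smallness from Proposition~\ref{prop 3.2}, makes the normalized boundary error vanish, so the limit satisfies $\partial_\eta w=0$ and Lemma~\ref{lemma 3.4} applies. One caution on your suggested even reflection: it does not reduce matters to an interior problem with a small error, because nonzero Neumann data create a jump in the normal derivative (a single-layer source on $\partial\mathscr C_+$) and the reflected coefficients are only piecewise smooth; rely instead on the boundary (oblique-derivative) Schauder estimates you already invoke, treating the quadratic boundary terms as small coefficients multiplying $\nabla w_i$ and the metric term as an inhomogeneity that vanishes after normalization.
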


\section{The main argument}\label{section 6}

Let $\mathcal{M}_+$ denote the space of Radon measures on $\mathbb{R}^{n+1}$ with finite Gaussian mass and whose support is contained in the $\mathbb{R}^{n+1}_+.$ Following \cite{colding2015rigidity}, we define a metric \(d_V\) on $\mathcal{M}_+$. Let \(\{f_j\}_{j=1}^{\infty}\) be a countable dense subset of the unit ball of
\(C^0_c(\mathbb{R}^{n+1})\), equipped with the $C^0$ norm. For $\mu_1, \mu_2 \in \mathcal{M}_+$  define
\begin{equation*}
d_V(\mu_1,\mu_2) = \sum_{j=1}^{\infty}2^{-j} \left|\int_{\mathbb{R}^{n+1}} f_j e^{-\frac{|x|^2}{4}}\,d\mu_1 -
\int_{\mathbb{R}^{n+1}} f_j e^{-\frac{|x|^2}{4}}\,d\mu_2 \right|.
\end{equation*}
Since all measures under consideration are supported in $\mathbb{R}^{n+1}_+,$ the above metric induces the weak topology on $\mathcal{M}_+$. Let
$$\mathcal{W} = \Big\{\mathcal{W}^{\alpha, \mathscr{RC}_+}: |\alpha| \leq \epsilon_1, \;\mathscr{R} \in K_1^+\Big\},$$
where $\mathscr{RC}_+$ denotes the image of $\mathscr{C}_+$ under rotation $\mathscr{R}.$ Here $\epsilon_1 > 0$ is a fixed constant so that $\mathcal{W}^{\alpha, \mathscr{RC}_+}, \mathscr{T}^{\alpha,\mathscr{RC}_+}$ are defined for $|\alpha| \leq \epsilon_1$ and Lemma~\ref{lemma 2.7} holds. We define the distance between two hypersurfaces in $\mathbb{R}^{n+1}_+$ as follows,
$$D(M_1, M_2) = \inf_{W} \Big\{D_W(M_1) + D_W(M_2) :  W \in \mathcal{W} \Big\}.$$
Recall 
\[
\rho(x)=(4\pi)^{-\frac n2}e^{-\frac{|x|^2}{4}}, \qquad
\phi(x,\tau) = \eta\!\left(\kappa^{-\frac12}e^{-\frac{\tau}{4}}\bigl(|x|^2-\alpha\bigr)\right), \qquad
\eta(s)=(1-s)_+^4,
\]
and
\[
\Phi(x,\tau)=\rho(x)\phi(x,\tau),
\]
where $A$ and $\mathcal{E}_0$ are from Theorem \ref{thm 2.3}. We define
$$\mathcal{A}(M_\tau) = e^{A e^{-\frac{\tau}{4}}}\int_{M_\tau} \Phi(x,\tau) d\mathcal H^n_{g_\tau} + A\mathcal E_0e^{-\tau} - \int_{\mathscr{C}_+} \rho(x) d\mathcal H^n.$$
For $\gamma > 0$, we define
\[
\mathcal A^\gamma(M) := |\mathcal A(M)|^{\gamma-1}\mathcal A(M).
\]
The following lemmas follow from \cite{ghosh2025cylindrical}.
\begin{lemma}\label{lemma 4.1}
Let $N\subset \mathbb{R}^{n+1}$ be a hypersurface satisfying $|\langle x,n_N(x)\rangle|\leq C_0$. Suppose that $M$ be a normal graph over $N$ on $B_R(0)$ given by a function $u$ with sufficiently small $C^2$ norm. Then, for every $f\in C^1(\mathbb{R}^{n+1})$, there exists a constant $C$ such that
\begin{multline*}
\left|\int_{M\cap B_R(0)} f e^{-\frac{|x|^2}{4}}\,d\mathcal H^n - \int_{N\cap B_R(0)} f e^{-\frac{|x|^2}{4}}\,d\mathcal H^n \right| \\
\leq C\|\nabla f\|_{C^0}\|u\|_{L^2} + C\|f\|_{C^0} \left(\|u\|_{L^2}\|\phi_N\|_{L^2} +\|u\|_{L^2}\|u\|_{H^2} + e^{-\frac{(R-1)^2}{4q}} \right),
\end{multline*}
where all $L^2$ and $H^2$ norms are taken over $N\cap B_R(0)$.
\end{lemma}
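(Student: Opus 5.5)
We prove the comparison by writing the Gaussian integral over $M$ as an integral over $N$ via the normal graph map $F(p)=p+u(p)n_N(p)$. We then Taylor-expand both the integrand and the Jacobian in $u$. Let $J_u$ denote the Jacobian of $F$ with respect to the induced measure on $N$. Up to the mismatch between $F(N\cap B_R)$ and $M\cap B_R$ near $\partial B_R$, we have
\[
\int_{M\cap B_R(0)} f e^{-\frac{|x|^2}{4}}\,d\mathcal H^n = \int_{N\cap B_R(0)} f(p+u\,n_N)\, e^{-\frac{|p+u n_N|^2}{4}} J_u(p)\,d\mathcal H^n(p).
\]
The boundary mismatch is confined to the annulus $B_{R}\setminus B_{R-1}$, since $|u|$ is small. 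Its contribution is therefore bounded, using the local area-ratio bound, by $C\|f\|_{C^0}\int_{|x|\geq R-1}e^{-\frac{|x|^2}{4}}\,d\mathcal H^n\leq C\|f\|_{C^0}e^{-\frac{(R-1)^2}{4q}}$ for any fixed $q>1$. This produces the last error term.

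For the main term we split the difference into three pieces.

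\textbf{(a) Variation of $f$.} We have $|f(p+u n_N)-f(p)|\leq \|\nabla f\|_{C^0}|u|$. The Jacobian and the Gaussian factor are bounded by $C e^{-\frac{|p|^2}{4}}$, using $|u|$ small and $|\langle p,n_N\rangle|\leq C_0$. Cauchy--Schwarz against the finite Gaussian mass then gives $C\|\nabla f\|_{C^0}\|u\|_{L^2}$.

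\textbf{(b) Variation of the weight.} We expand
\[
e^{-\frac{|p+un_N|^2}{4}}J_u = e^{-\frac{|p|^2}{4}}\Bigl(1-\tfrac12 u\langle p,n_N\rangle + u H_N + \mathcal{O}\bigl(u^2+|\nabla u|^2+|u||\nabla^2 u|\bigr)\Bigr).
\]
Here $J_u = 1+uH_N+\mathcal O(u^2+|\nabla u|^2)$, with the sign convention making $\phi_N=H_N-\frac12\langle x,n_N\rangle$ up to sign. The first-order term is exactly $u\,\phi_N$ (up to sign), so
\[
\Bigl|\int f\,u\,\phi_N\, e^{-\frac{|p|^2}{4}}\Bigr|\leq \|f\|_{C^0}\|u\|_{L^2}\|\phi_N\|_{L^2}.
\]

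\textbf{(c) Quadratic remainder.} The remainder has pointwise size controlled by $|u|(|u|+|\nabla u|+|\nabla^2 u|)+|\nabla u|^2$. After integrating against the weight, $\int|\nabla u|^2 e^{-|x|^2/4}$ is bounded by $\|u\|_{L^2}\|u\|_{H^2}$ by weighted integration by parts. This gives $C\|f\|_{C^0}\|u\|_{L^2}\|u\|_{H^2}$.

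\textbf{Main obstacle.} The delicate step is the integration by parts in (c). It produces a boundary term on $\partial(N\cap B_R)$, and in our setting also on $\partial N\subset\{x_{n+1}=0\}$. On the free boundary we use the Neumann-type condition of Section~\ref{section 2.3}. Alternatively, we can avoid integrating by parts altogether by bounding $|\nabla u|^2\leq|\nabla u|\cdot\|u\|_{C^2}$ and interpolating. The terms on $\partial B_R$ are absorbed into the Gaussian tail $e^{-\frac{(R-1)^2}{4q}}$ by inserting a cutoff supported in $B_{R-1}$ before integrating by parts. This mirrors the argument in \cite{ghosh2025cylindrical}, and with these modifications the proof carries over verbatim.
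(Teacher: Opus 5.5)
Your strategy is the argument the paper takes over, by citation, from the boundaryless cylindrical case. You pull back by $F(p)=p+u\,n_N$, expand $e^{-|F|^2/4}J_u$ so that the linear term is $u\phi_N$ (this is where $|\langle x,n_N\rangle|\le C_0$ is used), and charge the region near $\partial B_R$ to the Gaussian tail. Steps (a), (b) and the tail estimate are fine. The gap is exactly where you flag it: the bound $\int_{N\cap B_R}|\nabla u|^2\rho\le C\|u\|_{L^2}\|u\|_{H^2}$ when $\partial N\subset\{x_{n+1}=0\}$. The paper does not address this point either, and neither of your remedies closes it.

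The Neumann route fails for two reasons. The lemma is a purely Euclidean statement about arbitrary $M$ and $N$ and assumes no boundary condition on $u$, so you cannot invoke Section~\ref{section 2.3}. Even where that condition applies, it only gives $|\partial_\eta u|\le C(|u||\nabla u|+|\nabla u|^2+Re^{-\tau/2})$. The last term leaves a contribution of order $Re^{-\tau/2}\int_{\partial N}|u|\rho$ with no counterpart on the right-hand side. (If both $M$ and $N$ met the hyperplane orthogonally in the Euclidean sense, you would get $\partial_\eta u\equiv0$ exactly, since $\eta$ is then a principal direction of $N$ along $\partial N$; but this is not assumed.)

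The alternative $|\nabla u|^2\le|\nabla u|\,\|u\|_{C^2}$ only yields $C\|u\|_{C^2}\|\nabla u\|_{L^2}$. That is not controlled by $\|u\|_{L^2}\|u\|_{H^2}$, as a narrow bump shows. And ``interpolating'' is precisely the inequality that needs proof.

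What works is to keep the boundary term. With a cutoff near $\partial B_R$ (costing only the tail), $\int|\nabla u|^2\rho=-\int u\bigl(\Delta u-\tfrac12\langle x,\nabla u\rangle\bigr)\rho+\int_{\partial N}u\,\partial_\eta u\,\rho$.

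\emph{Drift term.} This needs the Gaussian Hardy inequality $\int|x|^2v^2\rho\le C\int(v^2+|\nabla v|^2)\rho$ for $v=|\nabla u|$, which your ``weighted integration by parts'' uses silently. It follows from $\mathcal L|x|^2=2n+2\langle x,\vec H\rangle-|x^T|^2$ together with $|\langle x,n_N\rangle|\le C_0$ and bounded $H_N$. Its own boundary term vanishes because $\langle x,\eta\rangle=0$ on $\{x_{n+1}=0\}$.

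\emph{Boundary term.} Use the multiplicative trace inequality $\int_{\partial N}w^2\rho\le C\|w\|_{L^2}\|w\|_{H^1}$. Prove it by applying the divergence theorem to $w^2\chi(x_{n+1})\nabla^Nx_{n+1}$: orthogonality gives $\nabla^Nx_{n+1}=-\eta$ on $\partial N$, and the weighted divergence of $\chi\nabla^Nx_{n+1}$ is bounded by $C(1+C_0+\sup|H_N|)$. Taking $w=u$ and $w=|\nabla u|$ gives $\bigl|\int_{\partial N}u\,\partial_\eta u\,\rho\bigr|\le C\|u\|_{L^2}^{1/2}\|u\|_{H^1}\|u\|_{H^2}^{1/2}$.

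\emph{Absorption.} Since $\|u\|_{H^1}\le\|u\|_{L^2}+\|\nabla u\|_{L^2}$, Young's inequality absorbs the boundary term into the left side. This yields $\|\nabla u\|_{L^2}^2\le C\|u\|_{L^2}\|u\|_{H^2}$ with no boundary condition on $u$, which is what your step (c) needs.
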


\begin{lemma}\label{lemma 4.2}
For any two hypersurfaces $M_1$ and $M_2$, we have
\[
d_V(\mu_{M_1},\mu_{M_2}) \leq CD(M_1,M_2),
\]
where $\mu_{M_i}=\mathcal H^n\lfloor M_i$ denotes the Radon measure associated to $M_i$.
\end{lemma}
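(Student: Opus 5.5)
The plan is to route the comparison through an intermediate model surface. For any $W\in\mathcal W$, the triangle inequality for $d_V$ gives
\[
d_V(\mu_{M_1},\mu_{M_2})\le d_V(\mu_{M_1},\mu_W)+d_V(\mu_W,\mu_{M_2}).
\]
It therefore suffices to prove the one-sided estimate
\[
d_V(\mu_M,\mu_W)\le C\,D_W(M) \qquad\text{for every } W\in\mathcal W,
\]
with $C$ independent of $W$. Taking the infimum over $W$ then gives the lemma.

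Two preliminary reductions come first.
\begin{enumerate}
\item Since all measures have uniformly bounded Gaussian mass by the area ratio bound $\lambda_0$, we have $d_V\le C$ always. Hence we may assume $D_W(M)$ is smaller than any fixed threshold; otherwise the estimate is trivial after enlarging $C$.
\item The dense family $\{f_j\}$ in the definition of $d_V$ may be taken to consist of $C^1$ functions with $\|\nabla f_j\|_{C^0}\le 2^{j/2}$. This is possible by choosing smooth approximants and repeating entries in the enumeration, which does not change the induced topology. This choice ensures that $\sum_j 2^{-j}(1+\|\nabla f_j\|_{C^0})<\infty$.
\end{enumerate}

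For the one-sided estimate, let $R$ be the radius in the definition of $D_W(M)$, so that $M\cap B_R(0)$ is an $\frac{\eta}{2}$-graph of $u$ over $W$. Fix $j$ and split each integral $\int f_j e^{-|x|^2/4}$ into the parts inside and outside $B_R(0)$.

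Outside $B_R(0)$, the area ratio bound for both $M$ and $W$ gives a contribution at most $C\|f_j\|_{C^0}e^{-(R-1)^2/(4q)}$.

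Inside $B_R(0)$, Lemma~\ref{lemma 4.1} applies with $N=W$. The hypothesis $|\langle x,n_W\rangle|\le C_0$ holds because $W$ is a small graph over the rotated half-cylinder, on which $\langle x,n\rangle=\sqrt{2(n-k)}$. The resulting terms are handled as follows.
\begin{enumerate}
\item Since $|u|$ is small and $W$ has bounded curvature on $B_R(0)$, the graph function is comparable to the distance: $|u|\le C d_W$ pointwise, up to the Jacobian factor, which is bounded. Hence $\|u\|_{L^2}\le C\bigl(\int_{M\cap B_R}d_W^2e^{-|x|^2/4}\bigr)^{1/2}\le C D_W(M)$.
\item The $C^2$ bound $\frac{\eta}{2}$ together with the finite Gaussian area gives $\|u\|_{H^2}\le C$.
\item Since $W=\mathcal W^{\alpha,\mathscr{RC}_+}$ is a $C^2$-small graph over a shrinker, $\|\phi_W\|_{L^2}\le C$.
\end{enumerate}
Finally, choosing $q\in(1,p_0)$ so that $4q<8p_0$, we get $(R-1)^2/(4q)\ge R^2/(8p_0)-C$ for all $R$. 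Hence $e^{-(R-1)^2/(4q)}\le C e^{-R^2/(8p_0)}\le C D_W(M)$.

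Altogether,
\[
\Bigl|\int f_j e^{-\frac{|x|^2}{4}}d\mu_M-\int f_j e^{-\frac{|x|^2}{4}}d\mu_W\Bigr|\le C\bigl(1+\|\nabla f_j\|_{C^0}\bigr)D_W(M).
\]
Multiplying by $2^{-j}$ and summing, which converges by the choice of $\{f_j\}$, yields $d_V(\mu_M,\mu_W)\le C D_W(M)$, and the lemma follows.

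The main obstacle is the second reduction: reconciling the $C^0$-dense family in the definition of $d_V$ with the $\|\nabla f\|_{C^0}$ term in Lemma~\ref{lemma 4.1}. I would handle it exactly as described, by reselecting $\{f_j\}$ with controlled gradient growth, and I would note that this yields an equivalent metric inducing the same weak topology. A secondary point is to check that all constants are uniform over $|\alpha|\le\epsilon_1$ and over rotations $\mathscr R$; this follows from the smooth dependence of $\mathcal W^{\alpha,\mathscr{RC}_+}$ on $\alpha$ and from compactness of the rotation parameters.
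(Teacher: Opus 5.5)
Your proposal is correct and is essentially the argument the paper defers to in \cite{ghosh2025cylindrical}. You pass through a comparison surface $W\in\mathcal W$ by the triangle inequality, bound $d_V(\mu_M,\mu_W)\le C\,D_W(M)$ term by term using Lemma~\ref{lemma 4.1}, and absorb the Gaussian tail outside $B_R(0)$ into $e^{-R^2/(8p_0)}$. Your reselection of $\{f_j\}$ with $\sum_j 2^{-j}\|\nabla f_j\|_{C^0}<\infty$ makes explicit a point the paper leaves implicit, and it is genuinely needed for a linear bound. One small refinement: the hypothesis $|\langle x,n_W\rangle|\le C_0$ on $W\cap B_{R_\alpha-1}(0)$ does not follow from $C^2$-smallness alone, since $|x|$ ranges up to $R_\alpha$. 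It follows instead from the bound $|z|\,|\nabla(u_\alpha+N(u_\alpha))|\le C|\alpha|R_\alpha^2+CR_\alpha^7|\alpha|^2$, which tends to $0$ as $\alpha\to0$.
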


\begin{prop}\label{prop 4.3}
There exists $\epsilon>0$ such that if $|\alpha| \leq \epsilon$, then for every $\tau\in[0,T_\alpha]$,
\[
\left|\int_{\mathscr{T}^{\alpha,\mathscr C_+}_{\tau}} \rho \; d\mathcal H^n - \int_{\mathscr{C}_+} \rho \; d\mathcal H^n \right| \leq |\alpha|^{\frac52}.
\]
\end{prop}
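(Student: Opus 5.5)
The plan is to expand the Gaussian area functional $F(\Sigma)=\int_\Sigma\rho\,d\mathcal H^n$ to second order around $\mathscr C_+$ along the graph $w(\tau)=u_{\alpha(\tau)}+N(u_{\alpha(\tau)})+v_\alpha(\tau)$ that defines $\mathscr T^{\alpha,\mathscr C_+}_\tau$ on $\mathscr C_+\cap B_{R_\alpha-1}(0)$. Beyond that radius we use the tail bound from the local area ratios. Since $\mathscr C_+$ is a free-boundary shrinker, the first variation of $F$ at $\mathscr C_+$ vanishes in directions satisfying the free-boundary condition. The boundary term in the first variation is $\int_{\partial\mathscr C_+}\langle X,\eta\rangle\rho$, and it vanishes for variations tangent to $S$. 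Since the graph deformation is in the direction $\nu_{\mathscr C_+}$, which is tangent to $S$ along $\partial\mathscr C_+$, the linear term is zero. Hence
\[
F(\mathscr T^{\alpha,\mathscr C_+}_\tau)-F(\mathscr C_+)=-\tfrac12\int_{\mathscr C_+} w\,L w\,\rho + O\big(\|w\|_{C^2}\|w\|_{H^1}^2\big)+O\big(e^{-\frac{(R_\alpha-1)^2}{4q}}\big).
\]
The integration by parts that rewrites the Hessian as $-\int wLw\,\rho$ produces the boundary term $\int_{\partial\mathscr C_+}w\,\partial_\eta w\,\rho$. This term vanishes because $\partial_\eta w=0$ on $\partial\mathscr C_+$, a property arranged in the construction of Section~\ref{section 2.6}.

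Next we estimate the quadratic term. Since $u_{\alpha(\tau)}\in K_2^+\subset\ker L_N$ and $L_N$ is self-adjoint in the weighted space on the Neumann class, we have $\int u_{\alpha}L w\,\rho=\int w L u_\alpha\,\rho=0$. Therefore
\[
\int wLw\,\rho=\int (N+v_\alpha)L(N+v_\alpha)\rho,
\]
which is bounded by $\|N+v_\alpha\|_{H^2}^2$. By the estimates of Section~\ref{section 2.6}, $\|N(u_\alpha)\|_{C^{2+\ell,\alpha}}\le C R_\alpha^6|\alpha|^2$ and $\|v_\alpha\|_{H^\ell}\le|\alpha|^{5/2}$. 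Since $R_\alpha^2\sim 8|\ln(c_1|\alpha|^{-9/4})|$, the factor $R_\alpha^{6}$ is only logarithmic, so this contribution is $O(|\alpha|^{4}|\ln|\alpha||^3)$. The cubic remainder is bounded by $\|w\|_{C^2}\|w\|_{H^1}^2\le C|\alpha|^{1/4}\cdot|\alpha|^2$, which is not quite good enough.

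To improve the cubic remainder, we expand one order further. The cubic term restricted to $u_\alpha$ alone is $\int_{\mathscr C_+} \mathcal P_3(u_\alpha,\nabla u_\alpha,\nabla^2u_\alpha)\rho$ on the cutoff region. It is $O(|\alpha|^3)$ because $u_\alpha$ is a polynomial with coefficients of size $|\alpha|$ and Gaussian moments are finite. The cross terms with $N+v_\alpha$ have size $|\alpha|\cdot|\alpha|^{2}R_\alpha^{C}$. The weighted $C^2$ norm of $v_\alpha$ is controlled through Gaussian Sobolev embedding by its $H^\ell$ norm, giving $|\alpha|^{5/2}$ rather than $|\alpha|^{1/4}$; the latter number only arises after multiplying by $e^{R_\alpha^2/8}$ for pointwise bounds. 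The tail beyond $B_{R_\alpha-1}$ is $e^{-(R_\alpha-1)^2/(4q)}\lesssim |\alpha|^{9/(2)\cdot(2/q)\cdot\frac12}$ up to constants. Choosing $q$ close to $1$, this is at most $|\alpha|^{4}$. Altogether the difference is $O(|\alpha|^3|\ln|\alpha||^C)\le|\alpha|^{5/2}$ for $|\alpha|\le\epsilon$. Lemma~\ref{lemma 4.1} with $N=\mathscr C_+$ and $f\equiv1$ gives a quick route to the lower-order parts, since $\phi_{\mathscr C_+}=0$.

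The main obstacle is controlling the cubic and higher-order terms with weighted integral norms rather than the crude pointwise $C^{2,\alpha}$ bound $|\alpha|^{1/4}$ on $\chi_{R_\alpha}v_\alpha$. The approach is to keep every nonlinear term in integrated form against $\rho$, using $H^\ell$ bounds and the polynomial growth of $u_\alpha$. It is also necessary to check that the boundary contributions in both the first variation and the integration by parts vanish, which holds because of the Neumann condition on $w$.
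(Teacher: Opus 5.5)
Your argument is correct. The paper gives no proof of this proposition; it is listed among the statements that ``follow from'' the cylindrical paper \cite{ghosh2025cylindrical}. Your write-up therefore supplies a self-contained version of the underlying mechanism: the Gaussian area is stationary at $\mathscr C_+$ with vanishing Hessian along $K_2^+$, so along $\mathscr T^{\alpha,\mathscr C_+}_\tau$ it deviates only to cubic order in $\alpha$, up to powers of $R_\alpha\sim|\ln|\alpha||^{1/2}$. It also makes explicit the two free-boundary inputs that the citation leaves implicit. First, $\nu_{\mathscr C_+}$ is tangent to $S$ along $\partial\mathscr C_+$, so the graph has its boundary on $S$ and its Gaussian area is an integral over $\mathscr C_+$ itself. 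Second, the Neumann condition removes the boundary terms on $\partial\mathscr C_+$. Note that Lemma~\ref{lemma 4.1} alone only yields $O(|\alpha|^2)$ here, so the second-order cancellation you use is genuinely needed.

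Several points can be tightened.
\begin{enumerate}
\item The Gaussian area of a normal graph over the cylinder is $\int_{\mathscr C_+}\rho\,G(w,\nabla w)$, with $G$ smooth, $G(0,0)=1$ and $DG(0,0)=0$ (since $H=\frac12\langle x,\nu\rangle$ on $\mathscr C_+$). Hence the linear term vanishes pointwise, and the third-order remainder is pointwise at most $C(|w|+|\nabla w|)^3$.
\item Write $w=u_{\alpha(\tau)}+h$ with $h=N(u_{\alpha(\tau)})+v_\alpha(\tau)$. The $u_{\alpha(\tau)}$-part of the remainder is $O(|\alpha|^3)$. The $h$-part is at most $\sup(|h|+|\nabla h|)\,\|h\|_{H^1}^2\lesssim|\alpha|^{1/4}R_\alpha^{12}|\alpha|^4$. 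So no expansion to a further order is needed, and in the cross term only $\partial_\eta u_{\alpha(\tau)}=0$ is actually used.
\item The quadratic term is $O(R_\alpha^{12}|\alpha|^4)=O(|\alpha|^4|\ln|\alpha||^6)$, not $|\ln|\alpha||^3$.
\item All integrations by parts take place on the truncated domain $\mathscr C_+\cap B_{R_\alpha-1}(0)$. They therefore also produce terms on $\partial B_{R_\alpha-1}(0)$ of size $O(R_\alpha^{C}e^{-(R_\alpha-1)^2/4})$, which are negligible but should be recorded.
\item You implicitly use $|\alpha(\tau)|\le 2|\alpha|$ and the bound $\|v_\alpha(\tau)\|_{H^\ell}\le|\alpha|^{5/2}$ uniformly for $\tau\in[0,T_\alpha]$. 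The construction provides both, but they should be stated.
\end{enumerate}
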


\begin{prop} \label{prop 4.4}
There are $C, \kappa_1, \epsilon_0 > 0$ and $\gamma \in (0,1)$ with the following property. Suppose that $|\alpha| \leq \epsilon_0, \tau + 1 \leq T_\alpha$  and $e^{-\frac{\tau_0}{8}}, \mathcal{D}_{\mathscr {T}^{\alpha, \mathscr{C}_+}_\tau} (M_{\tau_0}) \leq \kappa_1 |\alpha|^2.$ Then we have 
$$\mathcal{A}^\gamma(M_{\tau_0}) - \mathcal{A}^\gamma(M_{\tau_0 + 1}) \geq C^{-1} |\alpha|^2.$$
\end{prop}

\begin{proof}
Let $\epsilon_0 >0$ be the constant from Proposition~\ref{prop 3.2} and set $ \epsilon = \mathcal {D}_{\mathscr {T}^{\alpha, \mathscr{C}_+}_\tau} (M_{\tau_0})$. First suppose that $e^{-\frac{\tau_0}{8}} < \epsilon \leq \kappa_1|\alpha|^2.$ By Proposition~\ref{prop 3.2}, for $s \in [0,1], M_{\tau_0+s}$ can be written as a graph over $\mathscr {T}^{\alpha, \mathscr{C}_+}_{\tau+s}$ on $B_{R_\epsilon}(0)$, with graph function $u$ satisfying $|u|, |\nabla u|, |\nabla^2 u| \leq Ce^{\frac{|x|^2}{4p}} \epsilon$. Using $R_\epsilon \leq \sqrt{p_0\tau_0}, e^{-\frac{\tau_0}{8}} < \epsilon \leq \kappa_1|\alpha|^2$, the triangle inequality, and Lemma~\ref{lemma 4.1}, we obtain
\begin{align*}
\bigg|&\int_{M_{\tau_0+s} \cap B_{R_\epsilon}(0)} \Phi(x,\tau_0+s) \,d\mathcal{H}^n_{g_{\tau_0 +s}} - \int_{\mathscr{T}^{\alpha,\mathscr C}_{\tau+s} \cap B_{R_\epsilon}(0)} \Phi(x,\tau_0+s) \,d\mathcal{H}^n \bigg| \\
&\leq C R_\epsilon e^{-\frac{\tau_0}{2}} \int_{M_{\tau_0+s} \cap B_{R_\epsilon}(0)} \Phi(x,\tau_0+s) \,d\mathcal{H}^n \\
&\hspace{0.4 cm} + C R_\epsilon  e^{-\frac{\tau_0}{4}} \|u_{\tau_0+s}\|_{L^2}  + C\left(\|u_{\tau_0+s} \|_{L^2}\|\phi_{\mathscr{T}^{\alpha,\mathscr C}_{\tau+s}}\|_{L^2} +\|u_{\tau_0+s}\|_{L^2}\|u_{\tau_0+s}\|_{H^2} + e^{-\frac{(R_\epsilon-1)^2}{4q}} \right) \\
&\leq C \kappa_1 |\alpha|^4.
\end{align*}
Also,
\begin{align*}
\bigg|&\int_{M_{\tau_0+s}\setminus B_{R_\epsilon}(0)} \Phi(x,\tau_0+s) \,d\mathcal{H}^n_{g_{\tau_0 +s}} - \int_{\mathscr{T}^{\alpha,\mathscr C}_{\tau+s}\setminus B_{R_\epsilon}(0)} \Phi(x,\tau_0+s) \,d\mathcal{H}^n \bigg| \\
&\leq C \int_{M_{\tau_0+s}\setminus B_{R_\epsilon}(0)} \Phi(x,\tau_0+s) d\mathcal{H}^n + \int_{\mathscr{T}^{\alpha,\mathscr C}_{\tau+s}\setminus B_{R_\epsilon}(0)} \Phi(x,\tau_0+s) d\mathcal{H}^n \\
&\leq Ce^{-\frac{R_\epsilon^2}{4p_0}} \leq C \kappa_1|\alpha|^4.
\end{align*}
By Theorem \ref{thm 2.3}, we have
$$\int_{\mathscr{T}^{\alpha,\mathscr C_+}_{\tau}} \Phi(x,\tau_0) d\mathcal H^n - \int_{\mathscr{T}^{\alpha,\mathscr C_+}_{\tau+1}} \Phi(x,\tau_0+1) d\mathcal H^n = \int_0^{1} \int_{\mathscr {T}^{\alpha, \mathscr{C}_+}_{\tau+s}} \phi^2 \Phi(x,\tau_0+s) d\mathcal{H}^n.$$
Note that when $|x|^2 \leq \alpha+\left(1-2^{-1/4}\right)\kappa^{1/2}e^{\tau_0/4}, \phi(x,\tau_0+s) \geq \frac{1}{2}.$ Therefore, by Lemma~\ref{lemma 2.7}, we get 
$$\int_{\mathscr{T}^{\alpha,\mathscr C_+}_{\tau}} \Phi(x,\tau_0) d\mathcal H^n - \int_{\mathscr{T}^{\alpha,\mathscr C_+}_{\tau+1}} \Phi(x,\tau_0+1) d\mathcal H^n \geq \frac{1}{4} \kappa_0^2|\alpha|^2.$$
By area ratio bound,
$$e^{A e^{-\frac{\tau_0 + s}{4}}} \int_{M_{\tau_0 + s}} \Phi(x,\tau) d\mathcal H^n_{g_{\tau_0 + s}} - \int_{M_{\tau_0 + s}}  \Phi(x,\tau) d\mathcal H^n_{g_{\tau_0 + s}} \leq Ce^{-\frac{\tau_0}{4}}.$$
Therefore, 
\begin{align*}
    \mathcal{A}(M_{\tau_0}) - \mathcal{A}(M_{\tau_0+1}) &\geq e^{A e^{-\frac{\tau_0}{4}}}\int_{M_{\tau_0}} \Phi(x,\tau_0) d\mathcal H^n_{g_{\tau_0}} - e^{A e^{-\frac{\tau_0 + 1}{4}}}\int_{M_{\tau_0+1}} \Phi(x,\tau_0+1) d\mathcal H^n_{g_{\tau_0+1}} \\
    &\geq \int_{\mathscr{T}^{\alpha,\mathscr C_+}_{\tau}} \Phi(x,\tau_0) d\mathcal H^n - \int_{\mathscr{T}^{\alpha,\mathscr C_+}_{\tau+1}} \Phi(x,\tau_0+1) d\mathcal H^n - Ce^{-\frac{\tau_0}{4}}  \\
    &\quad - \bigg |\int_{M_{\tau_0}} \Phi(x,\tau_0) d\mathcal H^n_{g_{\tau_0}} - \int_{\mathscr{T}^{\alpha,\mathscr C_+}_{\tau}} \Phi(x,\tau_0) d\mathcal H^n \bigg| \\ 
    &\quad - \bigg |\int_{M_{\tau_0 +1}} \Phi(x,\tau_0+1) d\mathcal H^n_{g_{\tau_0+1}} - \int_{\mathscr{T}^{\alpha,\mathscr C_+}_{\tau+1}} \Phi(x,\tau_0+1) d\mathcal H^n \bigg|\\
    &\geq \frac{1}{4}\kappa_0^2 |\alpha|^4 - 3C \kappa_1 |\alpha|^4 \\
    &\geq \frac{\kappa_0^2}{8} |\alpha|^4.
\end{align*}
where we have chosen $\kappa_1$ small such that $\kappa_1 < \frac{\kappa_0^2}{24C}.$ Moreover, for $s\in[0,1]$, we have
\[
\bigg|\int_{M_{\tau_0+s}} \Phi(x, \tau_0+s) \,d\mathcal{H}^n_{g_{\tau_0 +s}} - \int_{\mathscr{T}^{\alpha,\mathscr C}_{\tau+s}} \Phi(x, \tau_0+s) \,d\mathcal{H}^n \bigg| \leq C|\alpha|^4.
\]
We also have,
$$\bigg|\int_{\mathscr{T}^{\alpha,\mathscr C}_{\tau+s}} \Phi(x, \tau_0+s) \,d\mathcal{H}^n - \int_{\mathscr{T}^{\alpha,\mathscr C}_{\tau+s}} \rho\,d\mathcal{H}^n \bigg| \leq C e^{-\frac{\tau_0}{4}},$$
and by Proposition~\ref{prop 4.3},
\[
\left|\int_{\mathscr{T}^{\alpha,\mathscr C_+}_{\tau}} \rho d\mathcal H^n - \int_{\mathscr{C}_+} \rho d\mathcal H^n \right| \leq |\alpha|^{\frac52}.
\]
Therefore, by the triangle inequality, for sufficiently small $|\alpha|, |\mathcal{A}(M_{\tau_0+s})| \leq 2|\alpha|^{\frac52}.$ Now, 
\begin{align*}
\mathcal{A}(M_{\tau_0})^\gamma -\mathcal{A}(M_{\tau_0+1})^\gamma 
&= \gamma\int_0^1 |\mathcal{A}(M_{\tau_0+s})|^{\gamma-1} \left(-\frac{d}{ds}\mathcal{A}(M_{\tau_0+s})\right)\,ds\\
&\geq \gamma 2^{\gamma-1} |\alpha|^{\frac{5\gamma}{2}-\frac52} \left(\mathcal{A}(M_{\tau_0}) -\mathcal{A}(M_{\tau_0+1}) \right) \\
&\geq \gamma \kappa_0^2 2^{\gamma -4}|\alpha|^{\frac{5\gamma}{2} - \frac12} |\alpha|^2.
\end{align*}
Choosing $0<\gamma<\frac15,$ the required result follows.

Finally, suppose that $\epsilon \leq e^{-\frac{\tau_0}{8}}\leq \kappa_1|\alpha|^2.$ We then use the second conclusion of Proposition~\ref{prop 3.2}, with \(\sqrt{p_0\tau_0}\) in place of \(R_\epsilon\). The remainder of the argument is identical to the case $e^{-\frac{\tau_0}{8}} < \epsilon \leq \kappa_1|\alpha|^2.$

\end{proof}
We have the following Lemma from \cite{ghosh2025cylindrical}.

\begin{lemma}\label{lemma 4.5}
There exist constants $C>0$ and $\epsilon>0$ such that the following holds. Suppose that $|\alpha|, |\widehat{\alpha}|, |\mathscr{R}-Id|, D_{\mathscr{T}^{\alpha,\mathscr{C}_+}_\tau}(M_{\tau_0}), D_{\mathcal{W}^{\alpha(\tau),\mathscr{C}_+}}(M_{\tau_0}), D_{\mathscr{T}^{\alpha,\mathscr{C}_+}_{\tau+L}}(M_{\tau_0+L})\leq\epsilon,$ and $ D_{\mathscr{T}^{\alpha,\mathscr{C}_+}_\tau}(M_{\tau_0})>e^{-\tau_0/8}.$
Then
\[
D_{\mathscr{T}^{\widehat{\alpha},\mathscr{RC}_+}_{\tau+1}}(M_{\tau_0+1}),\ 
D_{\mathcal{W}^{\widehat{\alpha}(\tau+1),\mathscr{RC}_+}}(M_{\tau_0+1}) \leq C\bigg(D_{\mathscr{T}^{\alpha,\mathscr{C}_+}_\tau}(M_{\tau_0}) +|\mathscr{R}-Id| +|\alpha-\widehat{\alpha}|
+e^{-\frac{(R_{\widehat{\alpha}}-1)^2}{8p_0}}\bigg),
\]
\vspace{-\baselineskip}
\[
D_{\mathscr{T}^{\widehat\alpha,\mathscr{RC}_+}_\tau}(M_{\tau_0}) \leq C\bigg(D_{\mathcal{W}^{\alpha(\tau),\mathscr{C}_+}}(M_{\tau_0}) +|\mathscr{R}-Id| +|\alpha-\widehat{\alpha}|
+e^{-\frac{(R_{\widehat{\alpha}}-1)^2}{8p_0}}\bigg).
\]
If $|\widehat{\alpha}|\geq|\alpha|$, the exponential error terms are bounded by the corresponding distance terms. Finally,
$$ D_{\mathcal{W}^{\alpha(\tau+1),\mathscr{C}_+}}(M_{\tau_0+1}) + D_{\mathcal{W}^{\alpha(\tau+1),\mathscr{C}_+}}(M_{\tau_0+L+1})
\leq C\Big(D_{\mathscr{T}^{\alpha,\mathscr{C}_+}_\tau}(M_{\tau_0}) +
D_{\mathscr{T}^{\alpha,\mathscr{C}_+}_{\tau+L}}(M_{\tau_0+L}) +|\alpha|^2 \Big).$$
\end{lemma}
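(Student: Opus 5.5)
The plan is to derive every inequality in Lemma~\ref{lemma 4.5} from three ingredients. The first is the one-step graphical and integral control supplied by Proposition~\ref{prop 3.2}. The second is an explicit comparison between the reference hypersurfaces $\mathscr T^{\alpha,\mathscr C_+}_\tau$, $\mathscr T^{\widehat\alpha,\mathscr{RC}_+}_\tau$ and $\mathcal W^{\alpha(\tau),\mathscr C_+}$ as graphs over $\mathscr C_+$. The third is the triangle inequality for the Gaussian-weighted distances $D_{(\cdot)}$.

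\textbf{Step 1: compare the reference hypersurfaces.} On $B_{\min\{R_\alpha,R_{\widehat\alpha}\}-1}(0)$, the hypersurface $\mathscr T^{\widehat\alpha,\mathscr{RC}_+}_\tau$ is a normal graph over $\mathscr T^{\alpha,\mathscr C_+}_\tau$. I would bound its graph function pointwise by $C(1+|x|)^2\bigl(|\alpha-\widehat\alpha|+|\mathscr R-Id|\bigr)$, together with its first two derivatives.
\begin{itemize}
\item The quadratic modes contribute $|u_\alpha-u_{\widehat\alpha}|\le C(1+|x|^2)|\alpha-\widehat\alpha|$.
\item The corrections $N(u_\alpha)$ and $v_\alpha$ contribute terms of size $C R^6|\alpha|^2$ and $C|\alpha|^{5/2}$. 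These are Lipschitz in $\alpha$ by the fixed-point construction of Section~\ref{section 2.6}.
\item A rotation contributes $C|x|\,|\mathscr R-Id|$.
\end{itemize}
The boundary needs one check. Elements of $K_1^+$ generate rotations that preserve $\{x_{n+1}=0\}$, by the evenness computation of Section~\ref{section 2.5}. Hence $\mathscr{RC}_+$ still meets $S$ orthogonally, and the graphical representation holds up to $\partial\mathscr C_+$. The polynomial growth in $|x|$ is absorbed by the Gaussian weight. The same comparison gives $\mathcal W^{\alpha(\tau),\mathscr C_+}$ as a graph over $\mathscr T^{\alpha,\mathscr C_+}_\tau$ with graph function $v_\alpha$, and $\|v_\alpha\|_{H^\ell}\le|\alpha|^{5/2}$.

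\textbf{Step 2: the first inequality.} Since $D_{\mathscr T^{\alpha,\mathscr C_+}_\tau}(M_{\tau_0})=\epsilon>e^{-\tau_0/8}$, Proposition~\ref{prop 3.2} shows that $M_{\tau_0+1}$ is an $\frac\eta4$-graph over $\mathscr T^{\alpha,\mathscr C_+}_{\tau+1}$ on $B_{(1+\theta)R_\epsilon}(0)$, with weighted $L^2$ distance at most $C\epsilon$. I would then compose this graph with the Step 1 graph of $\mathscr T^{\widehat\alpha,\mathscr{RC}_+}_{\tau+1}$ over $\mathscr T^{\alpha,\mathscr C_+}_{\tau+1}$. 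This shows that $M_{\tau_0+1}$ is an $\eta$-graph over the new reference on a ball of radius
\[
\min\{(1+\theta)R_\epsilon,\,R_{\widehat\alpha}-1\}.
\]
The $L^2$ part of the distance is then bounded by $C(\epsilon+|\alpha-\widehat\alpha|+|\mathscr R-Id|)$. The truncation term $e^{-R^2/8p_0}$ is controlled as follows. If the radius is $(1+\theta)R_\epsilon$, the term is $\le\epsilon$. If the radius is $R_{\widehat\alpha}-1$, the term is exactly the exponential error in the statement. The same argument, with $\mathcal W^{\widehat\alpha(\tau+1),\mathscr{RC}_+}$ in place of $\mathscr T$, gives the companion bound.

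\textbf{Step 3: the second inequality.} Here no time step is taken. I would go directly from a $\frac\eta2$-graph over $\mathcal W^{\alpha(\tau),\mathscr C_+}$ to a graph over $\mathscr T^{\widehat\alpha,\mathscr{RC}_+}_\tau$, using Step 1 together with the $v_\alpha$ comparison. The gap between the graph thresholds, $\frac\eta2$ versus $\eta$, leaves room for the smallness $|\alpha|,|\widehat\alpha|,|\mathscr R-Id|\le\epsilon$. The remark about the exponential terms should come from comparing $e^{-(R_{\widehat\alpha}-1)^2/8p_0}$ with the truncation term already present in the distance, using the monotone relation between $|\alpha|$ and $R_\alpha$ given by $|\alpha|^{9/4}e^{R_\alpha^2/8}=c_1$. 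I would check carefully which direction of $|\widehat\alpha|$ versus $|\alpha|$ makes this work.

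\textbf{Step 4: the final estimate.} Apply Step 2 with $\widehat\alpha=\alpha(\tau+1)$ and $\mathscr R=Id$, once at time $\tau_0$ and once at time $\tau_0+L$. Then pass from $\mathscr T$ to $\mathcal W$ using $\|v_\alpha\|\le|\alpha|^{5/2}$. The remaining shift is controlled by the ODE bound $|\alpha(\tau+s)-\alpha(\tau)|\le C|\alpha|^2 s$ on the relevant time range, which contributes the $|\alpha|^2$ term.

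\textbf{Main obstacle.} I expect the main difficulty to be the bookkeeping of graphical radii and truncation terms. The radius must stay below both $R_\alpha-1$ and $R_{\widehat\alpha}-1$, and also inside the Fermi region $r_0e^{\tau_0/2}$. The Fermi-metric errors of size $R^2e^{-\tau_0/2}$ must be absorbed using $\epsilon>e^{-\tau_0/8}$.
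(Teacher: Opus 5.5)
The paper gives no proof of its own here; it defers to the cylindrical case. Your route (one step of Proposition~\ref{prop 3.2}, then comparison of reference surfaces, then the triangle inequality for $D$) is the natural reconstruction, but two steps fail as written.

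\textbf{Graphical radius in Steps~2 and~3.} You assert that the composed graph is an $\eta$-graph on $B_{\min\{(1+\theta)R_\epsilon,R_{\widehat\alpha}-1\}}(0)$. This contradicts your own Step~1 bound. Set $\delta:=|\alpha-\widehat\alpha|+|\mathscr R-Id|$. The reference surfaces differ by $C(1+|x|)^2\delta$, and $\delta$ is only assumed $\le\epsilon$. Meanwhile $R_\epsilon$ can be of order $\sqrt{\tau_0}$ with $\tau_0$ unbounded. For example, take $\widehat\alpha=\alpha$ tiny and $|\mathscr R-Id|=\epsilon$: the tilted axis leaves the $\eta$-tube at $|x|\sim\eta/\epsilon$. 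The radius also may not exceed $R_\alpha-1$, since the intermediate surface $\mathscr T^{\alpha,\mathscr C_+}_{\tau+1}$ only exists there. What you actually get is graphicality on $B_{R'}(0)$ with
\[
R'=\min\bigl\{(1+\theta)R_\epsilon,\;R_\alpha-1,\;R_{\widehat\alpha}-1,\;c\,(\eta/\delta)^{1/2}\bigr\},
\]
and each cap must be paid for in the truncation term:
\begin{enumerate}
\item[(a)] The cap $c(\eta/\delta)^{1/2}$ costs $e^{-c^2\eta/(8p_0\delta)}\le C\delta$.
\item[(b)] The cap $R_\alpha-1$ costs $e^{-(R_\alpha-1)^2/8p_0}$. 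This is at most $D_{\mathscr T^{\alpha,\mathscr C_+}_\tau}(M_{\tau_0})$, because the truncation radius in that distance is itself capped at $R_\alpha-1$.
\end{enumerate}

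\textbf{Unabsorbed error in Step~3, and the missing idea.} Step~3 produces an error that is not on the right-hand side. There $\mathscr T^{\widehat\alpha,\mathscr{RC}_+}_\tau$ carries $v_{\widehat\alpha}$ while $\mathcal W^{\alpha(\tau),\mathscr C_+}$ does not. So even for $\widehat\alpha=\alpha$ and $\mathscr R=Id$ you incur an error $|\widehat\alpha|^{5/2}$. The idea that removes it comes from the definition of $R_\beta$. For small $\beta$, the relation $|\beta|^{9/4}e^{R_\beta^2/8}=c_1$ together with $p_0>1$ gives
\[
e^{-\frac{(R_\beta-1)^2}{8p_0}}\ \ge\ \bigl(c_1^{-1}|\beta|^{9/4}\bigr)^{1/p_0}\ \ge\ c_1^{-1}|\beta|^{9/4}\ \ge\ |\beta|^{5/2}.
\]
Taking $\beta=\widehat\alpha$ absorbs the Step~3 error into the exponential term. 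Together with (b), it also gives $D_{\mathscr T^{\alpha,\mathscr C_+}_\tau}(M_{\tau_0})\ge|\alpha|^{5/2}$. This lets you drop your claim that $v_\alpha$ is Lipschitz in $\alpha$. That claim is not immediate from the fixed-point construction, because $R_\alpha$ and $T_\alpha$ vary with $|\alpha|$ at rates that blow up as $\alpha\to0$. Instead, bound the $v$-terms crudely by $|\alpha|^{5/2}+|\widehat\alpha|^{5/2}$ and absorb them as above. Genuine Lipschitz control is then needed only for $u_{\alpha(\tau)}-u_{\widehat\alpha(\tau)}$ (Gronwall for the $\alpha$-ODE) and for $N$. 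The part of the $N$-difference caused by the different cut-off radii comes from sources where the Gaussian weight is polynomially small in $\max\{|\alpha|,|\widehat\alpha|\}$, so it is absorbed the same way.

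\textbf{The direction you left open.} The same bound settles it:
\begin{enumerate}
\item[(i)] If $|\widehat\alpha|\le|\alpha|$, then $R_{\widehat\alpha}\ge R_\alpha$ and the exponential term is $\le D$ directly.
\item[(ii)] If $|\alpha|\le|\widehat\alpha|\le2|\alpha|$, then $R_{\widehat\alpha}^2\ge R_\alpha^2-18\ln2$ and the term is $\le CD$.
\item[(iii)] If $|\widehat\alpha|>2|\alpha|$, the term is at most $e^{R_{\widehat\alpha}/(4p_0)}\bigl(c_1^{-1}|\widehat\alpha|^{9/4}\bigr)^{1/p_0}\le|\widehat\alpha|\le2|\alpha-\widehat\alpha|$.
\end{enumerate}
So for $|\widehat\alpha|\ge|\alpha|$ the absorption needs $|\alpha-\widehat\alpha|$ as well as $D$.

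\textbf{Step~4.} Do not restart the ODE by setting $\widehat\alpha=\alpha(\tau+1)$. Instead compare $\mathscr T^{\alpha,\mathscr C_+}_{\tau+1}$ and $\mathscr T^{\alpha,\mathscr C_+}_{\tau+L+1}$ directly with $\mathcal W^{\alpha(\tau+1),\mathscr C_+}$. At time $\tau_0+L$ you cannot assume $D_{\mathscr T^{\alpha,\mathscr C_+}_{\tau+L}}(M_{\tau_0+L})>e^{-(\tau_0+L)/8}$. Use the second alternative of Proposition~\ref{prop 3.2} there, together with $e^{-(\tau_0+L)/8}<D_{\mathscr T^{\alpha,\mathscr C_+}_\tau}(M_{\tau_0})$. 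This is why both distances appear on the right.
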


The following proposition shows that any nearby shrinker with free boundary on \(\{x_{n+1}=0\}\) is a rotation of \(\mathscr{C}_+\) by some \(\mathscr{R}\in K^1_+\). The proof is identical to that of \cite[Proposition ~6.7]{ghosh2025cylindrical}, and we omit it.

\begin{prop} \label{prop 4.6}
    There exists $\epsilon > 0$ such that the following holds. Suppose that $M$ is a shrinker in $\mathbb{R}^{n+1}_+$ with free boundary on $\{x_{n+1}=0\}$, such that $d_V(\mu_{\mathscr{C}_+}, \mu_M) \leq \epsilon.$ Then $M=\mathscr{R}\mathscr{C}_+$ for some $\mathscr{R}\in K^1_+$.
\end{prop}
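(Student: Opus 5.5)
The plan is to argue by contradiction and compactness, reducing Proposition~\ref{prop 4.6} to an infinitesimal rigidity statement for $\mathscr C_+$ in the Neumann class. Suppose there are free boundary shrinkers $M_i\subset\mathbb R^{n+1}_+$ with $d_V(\mu_{\mathscr C_+},\mu_{M_i})\to0$ and $M_i\neq\mathscr R\mathscr C_+$ for every $\mathscr R\in K_1^+$.

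\textbf{Step 1: graphical convergence.} By the entropy bound from the area ratio assumption, the $M_i$ are self-shrinking free boundary Brakke flows whose Gaussian densities converge to that of $\mathscr C_+$. Since $\mathscr C_+$ is smooth up to the boundary, Theorem~\ref{thm 2.5} applies, here with $g=\delta$ so Fermi coordinates are Euclidean. Together with the reflection argument across $\{x_{n+1}=0\}$, this gives local curvature bounds. Hence $M_i\to\mathscr C_+$ smoothly on compact sets, up to the boundary, and on $B_{R_i}(0)$ with $R_i\to\infty$ each $M_i$ is a normal graph of a function $u_i$ over $\mathscr C_+$. Because each $M_i$ is an exact shrinker with free boundary on a flat hyperplane, the section~\ref{section 2.3} computation with $g_\tau=\delta$ gives an exact nonlinear Neumann condition $\partial_\eta u_i=O(|u_i||\nabla u_i|+|\nabla u_i|^2)$, and $u_i$ satisfies $Lu_i+Q(u_i)=0$.

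\textbf{Step 2: mod out symmetries and blow up.} Rotations in $K_1^+$ preserve both $\{x_{n+1}=0\}$ and the shrinker equation. So I first replace $\mathscr C_+$ by $\mathscr R_i\mathscr C_+$, choosing the rotation near the identity to minimize the weighted $L^2$ distance on $B_{R_i}$. This makes the $K_1^+$-component of $u_i$ orthogonal to zero up to higher order. I then normalize $v_i=u_i/\|u_i\|_{L^2(B_{R_i})}$. Non-concentration (the argument of Proposition~\ref{prop 3.2} and Corollary~\ref{cor 3.3} in the static case, with Ecker's log-Sobolev inequality) gives uniform weighted higher-$L^p$ bounds, so no mass escapes to infinity. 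The limit $v$ is a nonzero solution of $Lv=0$ with $\partial_\eta v=0$, since the nonlinear boundary error is quadratic and vanishes after normalization. Hence $v\in K_+=K_1^+\oplus K_2^+$, and by the choice of rotation $v\in K_2^+$.

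\textbf{Step 3: rule out $K_2^+$.} This is the main obstacle. As in Colding--Minicozzi and \cite[Prop.~6.7]{ghosh2025cylindrical}, the quadratic modes are obstructed at second order. Projecting $Lu_i+Q(u_i)=0$ onto $K_2^+$ gives $P_{K_2^+}Q(u_i)=O(\|u_i\|^3)$ plus cutoff errors. For $u_i\approx\|u_i\|v$ with $v\in K_2^+$, however, the second-order term $P_{K_2^+}Q(v)$ is bounded below by $c|v|^2$, which is the same computation underlying Lemma~\ref{lemma 2.7}. This yields a contradiction. In the free boundary setting the key point is that every element of $K_2^+$ is the restriction of an even element of $K_2$. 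The even extension of $u_i$ solves the full cylinder equation on the doubled surface, since the exact Neumann condition plus reflection symmetry of $L$ and $Q$ makes the doubled graph $C^{1,1}$ and hence smooth. So the full-cylinder obstruction transfers verbatim.

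I expect the delicate part to be verifying that the doubled graph is a genuine smooth shrinker across the reflection plane. The cleanest route is to check that reflecting $M_i$ across $\{x_{n+1}=0\}$ gives a shrinker: orthogonal contact with a plane makes the reflected union $C^1$ with matching mean curvature, and elliptic regularity then upgrades it to smooth. This gives a shrinker in $\mathbb R^{n+1}$ close to $\mathscr C$ and symmetric under the reflection. Applying the Colding--Minicozzi rigidity \cite{colding2015uniqueness} to the doubled surface then gives a rotated cylinder, and the reflection symmetry forces the rotation to preserve the plane. Such a rotation maps $\mathscr C_+$ into $\mathbb R^{n+1}_+$ with free boundary, i.e. it lies in $K_1^+$. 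If this doubling works it replaces Steps 2--3 entirely, and I would try it first.
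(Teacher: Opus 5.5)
Your proposal is correct, and the doubling route in your last paragraph is the natural way to make the paper's one-line deferral to the full-cylinder case rigorous. Reflecting across the flat plane turns $M$ into a smooth shrinker in $\mathbb R^{n+1}$ that is $d_V$-close to $\mathscr C$ and symmetric under $\sigma\colon x_{n+1}\mapsto -x_{n+1}$. The isolation of cylinders then applies; the statement you want is the Colding--Ilmanen--Minicozzi rigidity theorem \cite{colding2015rigidity}, which is also the source of $d_V$, rather than \cite{colding2015uniqueness}. The $\sigma$-symmetry makes the axis of the resulting cylinder $\sigma$-invariant. Closeness fixes the axis type (containing $e_{n+1}$, or lying in the plane). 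So the cylinder equals $\mathscr R\mathscr C$ for a rotation with $\mathscr R e_{n+1}=e_{n+1}$, which gives $M=\mathscr R\mathscr C_+$.

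Your Steps 1--3 would instead re-prove that rigidity in the Neumann class. That needs tail control before the $K_2^+$ obstruction can be read off: the graphical radius $R_i$ must satisfy $e^{-R_i^2/4}=o(\|u_i\|^2)$. The reduction to the full-space theorem gets all of this for free.

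One correction in Step 1: for $g=\delta$, the free boundary condition of a normal graph over $\mathscr C_+$ is exactly $\partial_\eta u=0$, not merely quadratically small. Along $\partial\mathscr C_+$ the normal $\nu_{\mathscr C}$ lies in $\{x_{n+1}=0\}$ and $\eta$ is a principal direction. Hence the image of $\eta$ under the graph map is $(1+\kappa_\eta u)\eta+(\partial_\eta u)\nu_{\mathscr C}$, while images of directions tangent to $\partial\mathscr C_+$ stay in the plane. This exactness is what your Step 3 even extension actually uses. With a genuine quadratic error, the extension would not be $C^1$. Also, the $K_2^+$-projection of $Lu_i$ would pick up a boundary term of the same order as $P_{K_2^+}Q(u_i)$.
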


As in \cite{ghosh2025cylindrical}, we define
$$\mathcal {D} (M_{\tau_1},M_{\tau_2}) := \sup_{s \in [0,1]} D(M_{\tau_1 -s}, M_{\tau_2 -s}).$$
The following proposition provides the key estimate needed to prove uniqueness of the tangent flow. Its proof follows the argument of Proposition~6.8 in \cite{ghosh2025cylindrical}; we indicate only the modifications needed in the present free-boundary setting.

\begin{prop} \label{prop 4.7}
There are $C, L, \epsilon > 0$ and $\gamma \in (0,1)$ with the following property. Suppose that 
$$\tau_0^{-1}, \sup\limits_{s\in [0,1]} d_V(\mu_{\mathscr{C}_+}, \mu_{M_{\tau_0-s}}), \mathcal A^\gamma (M_{\tau_0 - 1}) - \lim\limits_{\tau \to \infty} \mathcal A^\gamma(M_\tau) \leq \epsilon.$$
Then one of the following holds:
\begin{enumerate} [label=\normalfont(\roman*) ]
\item $\mathcal{D}(M_{\tau_0 + L}, M_{\tau_0 + 2L}) \leq Ce^{-\frac{\tau_0}{8}}.$
\item $\mathcal{D}(M_{\tau_0 + L}, M_{\tau_0 + 2L}) \leq \frac{1}{2} \mathcal{D}(M_{\tau_0}, M_{\tau_0 + L}).$
\item $\mathcal{D}(M_{\tau_0 + L}, M_{\tau_0 + 2L}) \leq C \Big(\mathcal A^\gamma(M_{\tau_0 + L}) -\mathcal A^\gamma(M_{\tau_0 + 2L}) \Big).$
\end{enumerate}
\end{prop}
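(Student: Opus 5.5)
We follow the scheme of Proposition~6.8 of \cite{ghosh2025cylindrical}. The idea is to choose a comparison flow $\mathscr T^{\alpha,\mathscr{RC}_+}$ that nearly realizes the distance $\mathcal D(M_{\tau_0},M_{\tau_0+L})$, and then split into cases according to the size of $|\alpha|$ relative to that distance. Throughout, all comparison flows satisfy the homogeneous Neumann condition. The free-boundary errors enter only through the perturbed Neumann estimate of Section~\ref{section 2.3} and the metric error $R^2e^{-\tau/2}$ of Section~\ref{section 2.2}, and both are absorbed into the $e^{-\tau_0/8}$ term.

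\emph{Step 1 (choice of the comparison flow).} From the smallness of $\sup_{s\in[0,1]} d_V(\mu_{\mathscr C_+},\mu_{M_{\tau_0-s}})$ and from $\tau_0^{-1}\le\epsilon$, a compactness argument shows that the $M_{\tau_0-s}$ are $\eta$-graphs over $\mathscr C_+$ on large balls. This uses Proposition~\ref{prop 4.6} to identify limits of nearby shrinkers as rotations $\mathscr{RC}_+$ with $\mathscr R\in K^1_+$; here smallness of $\mathcal A^\gamma(M_{\tau_0-1})-\lim\mathcal A^\gamma$ forces subsequential limits to be shrinkers, by Theorem~\ref{thm 2.3}. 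Pick $W=\mathcal W^{\alpha,\mathscr{RC}_+}$ almost realizing $D(M_{\tau_0-s},M_{\tau_0+L-s})$. After rotating, assume $\mathscr R=\mathrm{Id}$. Lemma~\ref{lemma 4.5} then converts this into control of $D_{\mathscr T^{\alpha,\mathscr C_+}_\tau}(M_{\tau_0})$ and $D_{\mathscr T^{\alpha,\mathscr C_+}_{\tau+L}}(M_{\tau_0+L})$ by $C(\mathcal D(M_{\tau_0},M_{\tau_0+L})+|\alpha|^2)$.

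\emph{Step 2 (case split).} Set $\mathcal D_0=\mathcal D(M_{\tau_0},M_{\tau_0+L})$. If the relevant distances fall below $\max\{e^{-\tau_0/8},C_0e^{-(R_\alpha-1)^2/(8p_0)}\}$, we are in case (i). The exponential term is bounded by $C|\alpha|^{9/4\cdot(\cdot)}$ through the definition of $R_\alpha$, so it reduces either to (i) or to the small-$\alpha$ regime. Otherwise, two subcases arise.

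\emph{Subcase (a): $|\alpha|^2\ll\mathcal D_0$.} Apply the three-annulus Lemma~\ref{lemma 5.6} to $\mathcal D_{\mathscr T^{\alpha,\mathscr C_+}}$. If growth by $e^{\tilde\delta_1L}$ occurred, it would propagate forever, contradicting the bound $\epsilon$ on the distances, which is maintained by Proposition~\ref{prop 3.2} and Corollary~\ref{cor 3.3}. Hence there is decay by a factor $e^{\tilde\delta_1 L}$. Choosing $L$ large and using the last estimate in Lemma~\ref{lemma 4.5} to return to $\mathcal W$-distances gives $\mathcal D(M_{\tau_0+L},M_{\tau_0+2L})\le C e^{\tilde\delta_1 L}$. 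Care is needed here: the absence of kernel drift comes from Lemma~\ref{lemma 3.4}(2), after re-choosing $\widehat\alpha,\widehat{\mathscr R}$ to kill the $K_+$ component. This is where the Neumann kernel description $K_+=K_1^+\oplus K_2^+$ is used, since $K_1^+$ is absorbed by rotations and $K_2^+$ by $\alpha$. The result is $\mathcal D(M_{\tau_0+L},M_{\tau_0+2L})\le \tfrac12\mathcal D_0$, which is case (ii).

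\emph{Subcase (b): $\mathcal D_0\lesssim|\alpha|^2$.} If also $\mathcal D_{\mathscr T^{\alpha}}(M_{\tau_0+L})\le\kappa_1|\alpha|^2$, apply Proposition~\ref{prop 4.4} on each unit interval of $[\tau_0+L,\tau_0+2L]$ and sum. This gives $\mathcal A^\gamma(M_{\tau_0+L})-\mathcal A^\gamma(M_{\tau_0+2L})\ge C^{-1}|\alpha|^2$. The target quantity $\mathcal D(M_{\tau_0+L},M_{\tau_0+2L})$ is bounded by $C|\alpha|^2$ via Lemma~\ref{lemma 4.5}, which gives (iii). Keeping $\kappa_1|\alpha|^2$-closeness along the whole interval again uses Corollary~\ref{cor 3.3} and the three-annulus lemma; if it fails, we fall back to subcase (a) with a modified $\widehat\alpha$.

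\emph{Main obstacle.} The delicate step is subcase (a). There one must reselect parameters $(\widehat\alpha,\widehat{\mathscr R})$ so that the normalized limit of graph functions, which solves $(\partial_s-L_N)u=0$, has no component in $\ker L_N$. In the contradiction argument, the limit must satisfy the homogeneous Neumann condition. I would verify this using the boundary estimate of Section~\ref{section 2.3}: after dividing by $\mathcal D_0\ge e^{-\tau_0/8}$, the quadratic terms and the term $Re^{-\tau/2}$ in that estimate vanish in the limit. The quadratic terms are controlled by the pointwise bound of Proposition~\ref{prop 3.2}, and $Re^{-\tau/2}$ is $\ll e^{-\tau_0/8}$.
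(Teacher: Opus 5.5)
\textbf{There is a genuine gap in your Subcase (a).} There you rule out forward growth, $\mathcal D_{\mathscr T^{\alpha,\mathscr C_+}_{\tau+2L}}(M_{\tau_0+2L})\ge e^{\tilde\delta_1L}\,\mathcal D_{\mathscr T^{\alpha,\mathscr C_+}_{\tau+L}}(M_{\tau_0+L})$, on the grounds that it would ``propagate forever'' and contradict smallness of the distances. No such smallness is available.

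\begin{itemize}
\item The hypotheses of Proposition~\ref{prop 4.7} control $M_{\tau_0-s}$ only for $s\in[0,1]$. They also give a small drop of $\mathcal A^\gamma$, but by compactness this yields closeness only on a bounded time window.
\item Proposition~\ref{prop 3.2} and Corollary~\ref{cor 3.3} are one-step estimates and do not prevent growth.
\item Lemma~\ref{lemma 5.6} can be iterated only while its hypotheses hold, in particular $\tau+2L\le T_\alpha=|\alpha|^{-1/8}$ and $\mathcal D\le\epsilon_0$. Repeated growth therefore just continues until the comparison flow ceases to exist or the distance reaches size $\epsilon_0$, which contradicts nothing.
\item Proposition~\ref{prop 4.7} is exactly the tool the proof of Theorem~\ref{thm 1.1} uses to show that the flow stays close to $\mathscr C_+$ at later times. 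Importing future closeness into its proof is circular.
\end{itemize}

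Growth on $[\tau_0+L,\tau_0+2L]$ is a genuine possibility, for instance through modes of $L_N$ with positive eigenvalue. It cannot be excluded; it has to be paid for by the drop of $\mathcal A^\gamma$, which is why alternative (iii) appears in the statement. Relatedly, Lemma~\ref{lemma 3.4}(2), applied to a normalized limit with no $\ker L_N$ component, gives forward growth \emph{or} backward growth, not decay.

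\textbf{What the paper does instead.} It argues by contradiction.
\begin{itemize}
\item Choose $(\alpha^i,\mathscr R^i)$ so that $d_i$ is within a factor $2$ of the infimum. This removes the $K_+=K_1^+\oplus K_2^+$ component of the normalized limit; your treatment of this point, including the Neumann verification via Section~\ref{section 2.3}, matches the paper.
\item When $d_i>e^{-\tau_i/8}$, this shows that either (a) forward growth or (b) decay from $\tau_i+1$ to $\tau_i+L$ must hold. These are then converted into (ii) or (iii) as in \cite[Proposition~6.8]{ghosh2025cylindrical}.
\item The comparison of $|\alpha|^2$ with a distance enters only when $d_i\le e^{-\tau_i/8}$. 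If $e^{-\tau_i/8}\ge\kappa|\alpha^i|^2$, Proposition~\ref{prop 3.2} and Lemma~\ref{lemma 4.5} give (i). Otherwise Proposition~\ref{prop 4.4}, together with $\mathcal D\le C|\alpha^i|^2$, gives (iii).
\end{itemize}

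Your Subcase (b) resembles this last case. It needs no summation over unit intervals, since $\mathcal A^\gamma$ is non-increasing by Theorem~\ref{thm 2.3}. However, assigning the regime $|\alpha|^2\ll\mathcal D_0$ to (ii) is incorrect, and your fallback from (b) to (a) inherits the same gap.
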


\begin{proof}
Suppose, by contradiction, that the conclusion fails. Then there exists a sequence of flows $M^i$ such that
$\tau_i^{-1}, \sup\limits_{s\in [0,1]} d_V(\mu_{\mathscr{C}^i_+}, \mu_{M^i_{\tau_i-s}}), \mathcal A^\gamma(M^i_{\tau_i-1}) - \lim\limits_{\tau \to \infty} \mathcal A^\gamma(M^i_\tau) \leq \epsilon_i,$ where $\epsilon_i \to 0,$ but none of the three alternatives holds. 

Up to choosing a subsequence, we can replace the sequence $\mathscr{C}^i_+$ by a single $\mathscr{C}_+.$ Let $\tilde{\delta}_1,\tilde{\delta}_2$ be the constants from Lemma~\ref{lemma 5.6}. As in the proof of \cite[Proposition~6.8]{ghosh2025cylindrical}, after passing to a subsequence, we can find $\alpha^i\to0$ and rotations $\mathscr R^i\to Id$ such that one of the following possibilities holds: 
\begin{enumerate} [label=\normalfont(\alph*)]
    \item $\mathcal{D}_{\mathscr{T}^{\alpha^i, \mathscr{R}^i \mathscr{C}_+}_{1+2L}} (M^i_{\tau_i + 2L}) \geq e^{\tilde{\delta_1} L} \mathcal{D}_{\mathscr{T}^{\alpha^i, \mathscr{R}^i \mathscr{C}_+}_{1+L}} (M^i_{\tau_i + L}),$ 
    \item $\mathcal{D}_{\mathscr{T}^{\alpha^i, \mathscr{R}^i \mathscr{C}_+}_2} (M^i_{\tau_i + 1}) \geq e^{\tilde{\delta_2} L} \mathcal{D}_{\mathscr{T}^{\alpha^i, \mathscr{R}^i \mathscr{C}_+}_{1+L}} (M^i_{\tau_i + L}).$ 
\end{enumerate}
Choose $\mathscr R^i$ and $\alpha^i$ so that
$$d_i:= \mathcal{D}_{\mathscr{T}^{\alpha^i,\mathscr R^i\mathscr C_+}_{1+L}} (M^i_{\tau_i+L}) $$
is within a factor of $2$ of the infimum among all admissible comparison flows. Since $M^i$ converges to $\mathscr C_+$, we have $d_i\to0$.

Suppose first that neither~(a) nor~(b) holds and $d_i>e^{-\frac{\tau_i}{8}}.$ Then after passing to a subsequence, we can argue exactly as in \cite{ghosh2025cylindrical} to write $M^i_{\tau_i+s}$ as the graph of $u_i(s)$ over $\mathscr{T}^{\alpha^i,\mathscr C^i_+}_{1+s}$ on larger and larger subsets for $s\in[-1,2L]$, where $\mathscr C^i_+=\mathscr R^i\mathscr C_+$, and the rescaled functions $d_i^{-1}u_i$ converge locally smoothly to a solution $u$ of the drift heat equation on $\mathscr C_+$ and by Section \ref{section 2.3}, 
$$\partial_\eta u = 0 \qquad \text{on} \;\partial \mathscr{C}_+.$$
The remainder of the contradiction argument is then identical to that of \cite[Proposition~6.8]{ghosh2025cylindrical}, yielding a contradiction for $L$ sufficiently large. Once we establish that either ~(a) or ~(b) holds, we can argue as in \cite{ghosh2025cylindrical} to obtain either (ii) or (iii).
 
It remains to consider the case $d_i\leq e^{-\frac{\tau_i}{8}}.$ If $e^{-\frac{\tau_i}{8}}\geq \kappa |\alpha^i|^2,$ then Proposition ~\ref{prop 3.2} together with Lemma \ref{lemma 4.5} gives

$$\mathcal{D}(M^i_{\tau_i+L},M^i_{\tau_i+2L}) \leq C e^{-\frac{\tau_i}{8}}, $$
after enlarging $C$ and absorbing $\kappa^{-1}$ into the constant. Thus alternative~(i) holds.

It remains to consider $e^{-\frac{\tau_i}{8}}<\kappa |\alpha^i|^2.$ As $d_i \leq e^{-\frac{\tau_i}{8}}$, Proposition~\ref{prop 4.4} gives
$$\mathcal A^\gamma(M^i_{\tau_i+L}) - \mathcal A^\gamma(M^i_{\tau_i+2L}) \geq C^{-1}|\alpha^i|^2.$$
On the other hand, Proposition~\ref{prop 3.2}, together with Lemma~\ref{lemma 4.5}, gives
$$\mathcal{D}(M^i_{\tau_i+L},M^i_{\tau_i+2L}) \leq C|\alpha^i|^2.$$
Combining the last two estimates yields
$$\mathcal{D}(M^i_{\tau_i+L},M^i_{\tau_i+2L}) \leq C\Big(\mathcal A^\gamma(M^i_{\tau_i+L}) - \mathcal A^\gamma(M^i_{\tau_i+2L})\Big),$$
and hence alternative~(iii) holds.
\end{proof}

We can now prove Theorem~\ref{thm 1.1}. The argument is similar to that of \cite[Theorem~6.7]{szekelyhidi2020uniqueness}. Set
$$E_0:=\mathcal A^\gamma(M_{\tau_0-1})-\lim_{\tau\to\infty}\mathcal A^\gamma(M_\tau).$$
Assume that $E_0$ is sufficiently small and that $M_{\tau_0+kL-s}$ remains sufficiently close to $\mathscr C_+$ for $s\in[0,1]$ and $k=0,\ldots,N$. Applying Proposition~\ref{prop 4.7} successively, the intervals for which (ii) holds give a geometric contraction, those for which (iii) holds are controlled by the corresponding drops of $\mathcal A^\gamma$, while (i) contributes a summable error. Thus
$$\sum_{k=0}^{N}\mathcal D(M_{\tau_0+kL},M_{\tau_0+(k+1)L}) \leq C\mathcal D(M_{\tau_0},M_{\tau_0+L}) + CE_0 + Ce^{-\frac{\tau_0}{8}}.$$
Consequently, by Lemma~\ref{lemma 4.2},
$$\sup_{s\in[0,1]}d(M_{\tau_0-s},M_{\tau_0+(N+1)L-s}) \leq C\mathcal D(M_{\tau_0},M_{\tau_0+L}) + CE_0 + Ce^{-\frac{\tau_0}{8}}.$$
For $\tau_0$ sufficiently large and $E_0$ sufficiently small, this shows that the flow remains sufficiently close to $\mathscr C_+$, so the proposition can be iterated indefinitely. Finally, since $\mathscr C_+$ is a tangent flow, there is a sequence $\tau_i\to\infty$ such that
$$\sup_{s\in[0,1]}d(M_{\tau_i-s},\mathscr C_+) \to 0, \qquad \mathcal A^\gamma(M_{\tau_i}) -\lim_{\tau\to\infty}\mathcal A^\gamma(M_\tau)\to0. $$
Applying the preceding estimate with $\tau_0=\tau_i$ and letting $i\to\infty$ yields
$$\lim_{i\to\infty}\sup_{\tau\ge\tau_i}\sup_{s\in[0,1]} d(M_{\tau-s},\mathscr C_+)=0.$$
Hence $\mathscr C_+$ is the unique tangent flow.

\bibliographystyle{plain}
\bibliography{references}

\end{document}